\def\fromstep#1#2{\ifthenelse{\stepno > #1 \OR \stepno = #1}{#2}{}}
\def\fromtostep#1#2#3{\ifthenelse{\( \stepno > #1 \OR \stepno = #1 \) \AND \( \stepno < #2 \OR \stepno = #2 \)}{#3}{}}
\def\onlyatstep#1#2{\ifthenelse{\stepno = #1}{#2}{}}
\def\drawLraising#1#2{%
  \draw[->](#1.7,#2) arc (0:45:.7);
  \draw[->](#1,#2)--+(1,0);
}
\def\drawDraising#1#2{%
  \draw[<-](#1.7,#2) arc (0:-45:.7);
  \draw[->](#1,#2)--+(1,-1);
}
\def\exHsuffix#1{\def\stepno{#1}
\begin{mypic}
\foreach \x in {-3,-2,-1,0}
  \foreach \y in {0,1,2,3}
    \fill(\x,\y) circle[radius=2pt];
\draw[->](-4,0)--(1,0);
\draw[->](0,0)--(0,4);
\fromstep9{\draw[thick,->](0,0)--(-1,0);}
\fromstep8{\draw[thick,->](-1,0)--(-1,1);}
\fromstep7{\draw[thick,->](-1,1)--(-2,1);}
\fromstep6{\draw[thick,->](-2,1)--(-2,2);}
\fromstep5{\draw[thick,->](-2,2)--(-2,3);}
\fromstep4{\draw[thick,->](-2,3)--(-3,3);}
\fromstep3{\draw[thick,->](-3,3)--(-2,2);}
\fromstep2{\draw[thick,->](-2,2)--(-1,1);}
\fromstep1{\draw[thick,->](-1,1)--(0,0);}
\end{mypic}
}
\def\exMsuffix#1{\def\stepno{#1}
\begin{mypic}
\foreach \x in {0,1,2,3,4,5,6,7,8,9}
  \foreach \y in {0,1,2,3}
    \fill(\x,\y) circle[radius=2pt];
\draw[->](0,0)--(10,0);
\draw[->](0,0)--(0,4);
\fromstep9{\draw[thick,->](0,0)--(1,0);}
\fromstep8{\draw[thick,->](1,0)--(2,1);}
\fromstep7{\draw[thick,->](2,1)--(3,1);}
\fromstep6{\draw[thick,->](3,1)--(4,2);}
\fromstep5{\draw[thick,->](4,2)--(5,3);}
\fromstep4{\draw[thick,->](5,3)--(6,3);}
\fromstep3{\draw[thick,->](6,3)--(7,2);}
\fromstep2{\draw[thick,->](7,2)--(8,1);}
\fromstep1{\draw[thick,->](8,1)--(9,0);}
\end{mypic}
}
\def\exQsuffix#1{\def\stepno{#1}
\begin{mypic}
\foreach \x in {0,1,2}
  \foreach \y in {0,1,2,3}
    \fill(\x,\y) circle[radius=2pt];
\draw[->](0,0)--(3,0);
\draw[->](0,0)--(0,4);
\fromstep9{\draw[thick,->](0,0)--(0,1);}
\fromstep8{\draw[thick,->](0,1)--(0,2);}
\fromstep7{\draw[thick,->](0,2)--(1,1);}
\fromstep6{\draw[thick,->](1,1)--(1,2);}
\fromstep5{\draw[thick,->](1,2)--(1,3);}
\fromstep4{\draw[thick,->](1,3)--(2,2);}
\fromstep3{\draw[thick,->](2,2)--(1,2);}
\fromstep2{\draw[thick,->](1,2)--(0,2);}
\fromstep1{\draw[thick,->](0,2)--(1,1);}
\end{mypic}
}
\def\exQprefix#1{\def\stepno{#1}
\begin{mypic}
\foreach \x in {0,1,2}
  \foreach \y in {0,1,2,3}
    \fill(\x,\y) circle[radius=2pt];
\draw[->](0,0)--(3,0);
\draw[->](0,0)--(0,4);
\fromstep1{\draw[thick,->](0,0)--(0,1);}
\fromstep2{\draw[thick,->](0,1)--(0,2);}
\fromstep3{\draw[thick,->](0,2)--(1,1);}
\fromstep4{\draw[thick,->](1,1)--(1,2);}
\fromstep5{\draw[thick,->](1,2)--(1,3);}
\fromstep6{\draw[thick,->](1,3)--(2,2);}
\fromstep7{\draw[thick,->](2,2)--(1,2);}
\fromstep8{\draw[thick,->](1,2)--(0,2);}
\fromstep9{\draw[thick,->](0,2)--(1,1);}
\end{mypic}
}
\def\exMprefix#1{\def\stepno{#1}
\begin{mypic}
\foreach \x in {0,1,2,3,4,5,6,7,8,9}
  \foreach \y in {0,1,2,3}
    \fill(\x,\y) circle[radius=2pt];
\draw[->](0,0)--(10,0);
\draw[->](0,0)--(0,4);
\fromstep1{\draw[thick,->](0,0)--(1,0);}
\onlyatstep2{\draw[thick,->](1,0)--(2,0);}
\fromstep3{\draw[thick,->](1,0)--(2,1);}
\onlyatstep3{\drawLraising{1}{0}}
\fromtostep{3}{7}{\draw[thick,->](2,1)--(3,0);}
\fromstep8{\draw[thick,->](2,1)--(3,1);}
\fromtostep{4}{7}{\draw[thick,->](3,0)--(4,0);}
\onlyatstep8{\draw[thick,->](3,1)--(4,1);}
\fromstep9{\draw[thick,->](3,1)--(4,2);}
\onlyatstep5{\draw[thick,->](4,0)--(5,0);}
\fromtostep{6}{7}{\draw[thick,->](4,0)--(5,1);}
\onlyatstep8{\draw[thick,->](4,1)--(5,2);}
\fromstep9{\draw[thick,->](4,2)--(5,3);}
\onlyatstep6{\drawLraising{4}{0}}
\onlyatstep6{\draw[thick,->](5,1)--(6,0);}
\onlyatstep7{\draw[thick,->](5,1)--(6,1);}
\onlyatstep8{\draw[thick,->](5,2)--(6,2);}
\fromstep9{\draw[thick,->](5,3)--(6,3);}
\onlyatstep7{\drawDraising{5}{1}}
\onlyatstep7{\draw[thick,->](6,1)--(7,0);}
\onlyatstep8{\draw[thick,->](6,2)--(7,1);}
\fromstep9{\draw[thick,->](6,3)--(7,2);}
\onlyatstep8{\drawDraising{2}{1}}
\onlyatstep8{\draw[thick,->](7,1)--(8,0);}
\fromstep9{\draw[thick,->](7,2)--(8,1);}
\onlyatstep9{\drawLraising{3}{1}}
\fromstep9{\draw[thick,->](8,1)--(9,0);}
\end{mypic}
}
\def\boxlabel#1{\phantomsection(#1)\def\@currentlabel{(\unexpanded{#1})}\label{box:#1}}
\def\boxref#1{\ref{box:#1}}
\renewcommand{\arraystretch}{1.2}
\def\myclap#1{\rule{5mm}{0mm}\clap{#1}\rule{5mm}{0mm}}
\newtheorem{theorem}{Theorem}
\newtheorem{proposition}[theorem]{Proposition}
\newtheorem{lemma}[theorem]{Lemma}
\theoremstyle{definition}
\newtheorem{example}[theorem]{Example}
\newenvironment{mypic}{\begin{tikzpicture}[scale=.35, baseline=(current bounding box.center)]}{\end{tikzpicture}}
\newcommand{\transition}[5]{\draw(#1.5,-.7) node{\tiny#2}; \draw(#1,-1.5) node{\tiny#3}; \draw(#1,-2) node{\tiny#4}; \draw(#1.5,-2.7) node{#5};}
\def\dir#1#2#3{\begin{mypic}
  \color{orange}
  \ifx b#3\color{black}\else\fi
  \ifx n#3\color{white}\else\fi
  \draw(-.1,-.1)--(-.1,1.1)--(1.1,1.1)--(1.1,-.1)--(-.1,-.1);
  \color{orange}
  \ifx s#2\color{black}\draw(.5,.5) node{#1};\else
    \ifx b#2\color{black}\else\fi
    \ifx l#2\color{blue}\else\fi
    \ifx r#2\color{red}\else\fi
    \ifx L#2\color{black}\else\fi
    \ifx R#2\color{black}\else\fi
    \ifx L#2\else\ifx R#2\else
      \ifx0#1\draw[thick,->](.5,0)--(.5,1);\else\fi
      \ifx1#1\draw[thick,->](0,0)--(1,1);\else\fi
      \ifx2#1\draw[thick,->](0,.5)--(1,.5);\else\fi
      \ifx3#1\draw[thick,->](0,1)--(1,0);\else\fi
      \ifx4#1\draw[thick,->](.5,1)--(.5,0);\else\fi
      \ifx5#1\draw[thick,->](1,1)--(0,0);\else\fi
      \ifx6#1\draw[thick,->](1,.5)--(0,.5);\else\fi
      \ifx7#1\draw[thick,->](1,0)--(0,1);\else\fi
      \ifx u#1\draw[thick,->](0,0)--(1,1);\fill(.2,.8) circle[radius=5pt];\else\fi
      \ifx l#1\draw[thick,->](0,.5)--(1,.5);\fill(.5,.9) circle[radius=5pt];\else\fi
      \ifx d#1\draw[thick,->](0,1)--(1,0);\fill(.8,.8) circle[radius=5pt];\else\fi
    \fi\fi
    \ifx L#2
      \ifx1#1\draw[-,double=black,double distance=1pt](0,0)--(.89,.89);\draw[thick,->](.999,.999)--(1,1);\else\fi
      \ifx2#1\draw[-,double=black,double distance=1pt](0,.5)--(.85,.5);\draw[thick,->](.99,.5)--(1,.5);\else\fi
      \ifx3#1\draw[-,double=black,double distance=1pt](0,1)--(.89,.11);\draw[thick,->](.999,.001)--(1,0);\else\fi
      \ifx u#1\draw[-,double=black,double distance=1pt](0,0)--(.89,.89);\draw[thick,->](.999,.999)--(1,1);\fill(.2,.8) circle[radius=5pt];\else\fi
      \ifx l#1\draw[-,double=black,double distance=1pt](0,.5)--(.85,.5);\draw[thick,->](.99,.5)--(1,.5);\fill(.5,.9) circle[radius=5pt];\else\fi
      \ifx d#1\draw[-,double=black,double distance=1pt](0,1)--(.89,.11);\draw[thick,->](.999,.001)--(1,0);\fill(.8,.8) circle[radius=5pt];\else\fi
    \else\fi
    \ifx R#2
      \ifx1#1\draw[-,double=white,double distance=1pt](0,0)--(.89,.89);\draw[thick,->](.999,.999)--(1,1);\else\fi
      \ifx2#1\draw[-,double=white,double distance=1pt](0,.5)--(.85,.5);\draw[thick,->](.99,.5)--(1,.5);\else\fi
      \ifx3#1\draw[-,double=white,double distance=1pt](0,1)--(.89,.11);\draw[thick,->](.999,.001)--(1,0);\else\fi
      \ifx u#1\draw[-,double=white,double distance=1pt](0,0)--(.89,.89);\draw[thick,->](.999,.999)--(1,1);\fill(.2,.8) circle[radius=5pt];\else\fi
      \ifx l#1\draw[-,double=white,double distance=1pt](0,.5)--(.85,.5);\draw[thick,->](.99,.5)--(1,.5);\fill(.5,.9) circle[radius=5pt];\else\fi
      \ifx d#1\draw[-,double=white,double distance=1pt](0,1)--(.89,.11);\draw[thick,->](.999,.001)--(1,0);\fill(.8,.8) circle[radius=5pt];\else\fi
    \else\fi
    \color{black}
  \fi
\end{mypic}}
\def\ldir#1{\raisebox{.25em}{\dir#1}}
\def\sldir#1{\resizebox{!}{1mm}{\dir#1}}
\def\pdir#1{\raisebox{.25em}{\begin{mypic}
  \color{black}
  \draw(-.1,-.1)--(-.1,1.1)--(1.1,1.1)--(1.1,-.1)--(-.1,-.1);
  \ifx p#1\draw[thick,->](.8,.2)--(.8,1);\else\fi
  \ifx m#1\draw[thick,->](.8,.2)--(.2,.4);\else\fi
  \ifx 0#1\draw[thick,->](.8,.2)--(0,.2);\else\fi
  \ifx -#1\draw[thick,->](.8,.2)--(1,0);\else\fi
\end{mypic}}}
\def\ptandem#1#2{\begin{mypic}
\ifx e#2\draw(-.5,.5) node{err.};\else
  \draw(-2.1,-1.1)--(-2.1,2.1)--(1.1,2.1)--(1.1,-1.1)--(-2.1,-1.1);
\fi
\ifx2#1
  \ifx0#2\draw[thick,->](0,0)--(0,2);\draw(-.1,1)--(.1,1);\else\fi
  \ifx1#2\draw[thick,->](0,0)--(-1,1);\else\fi
  \ifx2#2\draw[thick,->](0,0)--(-2,0);\draw(-1,-.1)--(-1,.1);\else\fi
  \ifx-#2\draw[thick,->](0,0)--(1,-1);\else\fi
\fi
\ifx3#1
  \ifx0#2\draw[thick,->](0,0)--(0,2);\draw(-.1,.666)--(.1,.666);\draw(-.1,1.333)--(.1,1.333);\else\fi
  \ifx1#2\draw[thick,->](0,0)--(-.666,1.333);\draw(-.399,.633)--(-.267,.699);\else\fi
  \ifx2#2\draw[thick,->](0,0)--(-1.333,.666);\draw(-.633,.399)--(-.699,.267);\else\fi
  \ifx3#2\draw[thick,->](0,0)--(-2,0);\draw(-.666,-.1)--(-.666,.1);\draw(-1.333,-.1)--(-1.333,.1);\else\fi
  \ifx-#2\draw[thick,->](0,0)--(.666,-.666);\else\fi
\fi
\ifx g#1
  \ifx0#2\draw[thick,->](0,0)--(0,2);\draw(-1,1) node{\tiny$(0,p)$};\else\fi
  \ifx1#2\draw[thick,->](0,0)--(-1.5,.5);\draw(-.5,1) node{\tiny$({-}m{-}1,p{-}m{-}1)$};\else\fi
  \ifx-#2\draw[thick,->](0,0)--(.25,-.25);\else\fi
  \ifx s#2\draw(-.25,.6) node{$\bar a$};\else\fi
\fi
\end{mypic}}
\def\trsIIIstep in:#1/#2/#3, h:#4->#5, v:#6->#7, out:#8/#9 ({\begin{mypic}%
  \draw(-1,-4.7)--(2,-4.7)--(2,4.2)--(-1,4.2)--(-1,-4.7);
  \draw(.5,3.5) node{\dir#1};
  \draw(.5,2) node{#2};
  \draw(.5,.5) node{\dir#3};
  \draw(-.2,-.7) node{\scriptsize#4};
  \draw(1.2,-.7) node{\scriptsize#5};
  \draw(-.2,-1.5) node{\scriptsize#6};
  \draw(1.2,-1.5) node{\scriptsize#7};
  \draw(.5,-2.5) node{#8};
  \draw(.5,-4) node{\dir#9};
\trsIIIstepcontinued}
\def\trsIIIstepcontinued#1).{\draw(.5,-5.5) node{\scriptsize(#1)};\end{mypic}}
\def\justcolor#1#2#3{%
  \color{orange}
  \def\mc{}
  \ifx s#2\color{black}\else\fi
  \ifx b#2\color{black}\else\fi
  \ifx l#2\color{blue}\else\fi
  \ifx r#2\color{red}\else\fi
  \ifx L#2\color{black}\def\mc{\boldsymbol}\else\fi
  \ifx R#2\color{black}\def\mc{\mathbb}\else\fi
}
\def\trsVIstep in:#1/#2, h:#3->#4, v:#5->#6, out:#7, label:#8.{\begin{mypic}%
  \draw(-1.2,-3.4)--(2.2,-3.4)--(2.2,2.5)--(-1.2,2.5)--(-1.2,-3.4);
  \justcolor#2%
  \draw(.5,2) node{#1};
  \draw(.5,.5) node{\dir#2};
  \color{black}
  \draw(-.2,-.7) node{\scriptsize#3};
  \draw(1.2,-.7) node{\scriptsize#4};
  \draw(-.2,-1.5) node{\scriptsize#5};
  \draw(1.2,-1.5) node{\scriptsize#6};
  \draw(.5,-2.7) node{\dir#7};
  \draw(.5,-4.1) node{\scriptsize#8};
\end{mypic}}
\def\trsptandemcont cond:#1, label:#2.{%
  \draw(1,-8) node{\scriptsize#1};
  \draw(1,-9.2) node{#2};
}
\def\trsptandem p:#1, in:#2, h:#3->#4, v:#5->#6, out:#7/#8, cond:#9.{\begin{mypic}%
  \draw(-2,-7.1)--(-2,1.1)--(4,1.1)--(4,-7.1)--(-2,-7.1);
  \draw(1,.5) node{#2};
  \draw(-.5,-.7) node{\scriptsize#3};
  \draw(2.5,-.7) node{\scriptsize#4};
  \draw(-.5,-1.5) node{\scriptsize#5};
  \draw(2.5,-1.5) node{\scriptsize#6};
  \draw(1,-4) node{\ptandem#1#7};
  \draw(1,-6.5) node{#8};
  \trsptandemcont cond:#9.
  \end{mypic}}
\newcounter{mpw}
\def\bN{\mathbb N}
\def\bZ{\mathbb Z}
\def\sLR#1{#1^{\text{\tiny LR}}}
\def\sRL#1{#1^{\text{\tiny RL}}}
\def\mysmash#1#2{\mathchoice{#1^{#2}}{#1^{\smash{#2}}}{#1^{\smash{#2}}}{#1^{\smash{#2}}}}
\def\myconstructions#1#2#3#4{\mysmash{#2_{\text{\tiny#1},#4}}{(#3)}}
\def\LR{\myconstructions{LR}}
\def\RL{\myconstructions{RL}}
\def\myDconstructions#1#2#3#4{\mysmash{\Delta_{#3#1#4}}{(#2)}}
\def\DLR{\myDconstructions{\rightarrow}}
\def\DRL{\myDconstructions{\leftarrow}}
\def\prefH{H'}
\def\wt{\operatorname{wt}}
\def\hyph{\discretionary{-}{}{}}
\newcommand{\raisabledec}{\bullet}
\newcommand{\raisablelink}[3]{\overset{#2,#3}{(#1)}}
\def\rlklow#1#2#3{$\raisablelink{#1}{#2}{\ifx -#3-1\else#3\fi}$}
\def\rlk#1{\rlklow#1}
\newenvironment{ouralgo}%
  {\begin{quotation}\hrulefill\begin{enumerate}[ref=(\arabic*)]}%
  {\end{enumerate}\vskip-2mm\hrulefill\end{quotation}}
\def\midalgo{\end{enumerate}\vskip-2mm\hrulefill\begin{enumerate}[ref=(\arabic*)]}
\def\Input#1{\item[Input:]#1}
\def\Output#1{\item[Output:]#1}
\title
[Bijections Between Łukasiewicz Walks and Generalized Tandem Walks]
{Bijections Between Łukasiewicz Walks\\ and Generalized Tandem Walks}
\author{Frédéric Chyzak}
\address{INRIA, France}
\email{frederic.chyzak@inria.fr}
\author{Karen Yeats}
\address{Combinatorics and Optimization, University of Waterloo, Canada}
\email{kayeats@uwaterloo.ca}
\begin{document}
\begin{abstract}
In this article, we study the enumeration by length
of several walk models on the square lattice.
We obtain bijections
between walks in the upper half-plane returning to the $x$-axis
and walks in the quarter plane.
A recent work by Bostan, Chyzak, and Mahboubi has given
a bijection for models using small north, west, and south-east steps.
We adapt and generalize it to a bijection
between half-plane walks using those three steps in two colours
and a quarter-plane model
over the symmetrized step set consisting of
north, north-west, west, south, south-east, and east.
We then generalize our bijections to certain models with large steps:
for given~$p\geq1$, a bijection is given
between the half-plane and quarter-plane models
obtained by keeping the small south-east step
and replacing the two steps north and west of length~1
by the $p+1$ steps of length~$p$
in directions between north and west.
This model is close to, but distinct from, the model of generalized tandem walks
studied by Bousquet-Mélou, Fusy, and Raschel.
\end{abstract}
\maketitle
\centerline{\small\today}
\bigskip\bigskip\bigskip

\section{Introduction}

\emph{Tandem walks} are walks on the square lattice~$\bZ^2$
that start at the origin and are confined to the quarter plane,
with all steps taken in the set
\[ S_1 = \{(-1,0),(0,1),(1,-1)\} , \]
that is, the set consisting of the small steps west, north, and south-east.
The name originates from queuing theory:
tandem walks indeed represent the behaviour of two M/M/1 queues in tandem.
See for example~\cite[Section~4.7]{FayolleIasnogorodskiMalyshev-1999-RWQ}.
Tandem walks are one simple model
in a larger family of models of walks on the quarter plane
with a prescribed step set.
In 2010, Bousquet-Mélou and Mishna~\cite{BousquetMelouMishna-2010-WSS}
identified 79~essentially different models of walks
with step sets consisting of nontrivial steps in $\{-1,0+1\}^2$. These models have since then received a vast amount of attention in the combinatorial litterature.
From the 79, another class of interest to the present work is obtained
by symmetrizing the step set with respect to the line $x = y$,
leading to a generalization of tandem walks to walks over the step set
\[ S_{1,\text{sym}} = S_1 \cup \{(0,-1),(1,0),(-1,1)\} , \]
that is, the six small steps
west, north, south-east, south, east, and north-west.
Both classes are known to have algebraic generating series
when enumerated by length (the number of steps).
For the former, Gouyou-Beauchamps gave in 1989
an implicit bijection in terms of Motzkin walks and standard Young tableaux%
~\cite{GouyouBeauchamps-1989-SYT}.
For the latter, Bousquet-Mélou and Mishna gave in 2010
an explicit algebraic calculation of the generating series,
by an averaging of a functional equation satisfied by the series%
~\cite{BousquetMelouMishna-2010-WSS}.
In addition, the last two authors remarked
that the symmetric duplication of the step set
used to define the second class
is likely to be mirrored,
in a manner not yet explained at the time,
by the factor~$2^n$ that they observed in their enumeration.
They asked for a bijective explanation of the phenomenon.

While the bijection suggested
by Gouyou-Beauchamps~\cite{GouyouBeauchamps-1989-SYT}
proceeds by composing several known bijections,
more recent works by Eu and collaborators%
~\cite{Eu-2010-SST,EuFuHouHsu-2013-SYT}
introduced a more direct transformation
that operates by searching for patterns on the walk viewed as a word
and successively rewriting their occurrences until none is found.
The patterns used are arbitrarily distant pairs of matching parentheses.
Very recently, Bostan, Chyzak, and Mahboubi revisited the problem
to get bijections described
by single-pass algorithms~\cite{BostanChyzakMahboubi--BBC}.%
\footnote{In the absence of a written text yet,
one can consult~\cite{Chyzak-2016-ALB} for an earlier, related presentation.}
They developed an approach inspired by the theory of automata:
the algorithm for each of the two converse bijections consists
of a single unidirectional traversal of the input walk,
read step by step by a transducer
that additionally maintains two nonnegative integer counters%
\footnote{The original presentation in~\cite{BostanChyzakMahboubi--BBC}
uses automata with infinitely many states and no auxiliary memory,
but, in view of the generalization to come,
we prefer an equivalent presentation with finitely many states
and counters that encode the numbering of the states
considered in~\cite{BostanChyzakMahboubi--BBC}.}.
The evolution of the counters follows very local rules.
This more explicit presentation of the bijection also allows for
following information about the final position.

The present work adapts the use of automata in~\cite{BostanChyzakMahboubi--BBC}
to study two models that generalize tandem walks.
On the one hand (Section~\ref{sec:six-step}),
a suitable interlacing of two copies of the bijection
between Motzkin walks and tandem walks
provides a bijection
between bicoloured Motzkin walks and 6-step walks in the quarter plane.
This leads to a combinatorial explanation
to the $2^n$~factor for the symmetric step-set model.
On the other hand (Section~\ref{sec:p-tandem}),
proceeding beyond Motzkin walks and tandem walks,
we introduce walks with large nonnegative steps.
Given a nonnegative integer~$p$,
we consider the class of walks of the upper half plane
that use the steps $(1, i)$ for $-1 \leq i \leq p$
and return to the $x$-axis,
which we call \emph{$p$-Łukasiewicz walks}.
We also introduce the class of walks of the quarter plane
that use the small step $(1, -1)$
and the large steps $(-i, p-i)$ for $0 \leq i \leq p$,
which we call \emph{$p$-tandem walks}.
Our principal result is to obtain a bijection between these two classes
(Theorem~\ref{thm:bij-Phi-p-Psi-p}).

Łukasiewicz walks have been studied for a long time in several forms.
They were initially introduced in logic by Łukasiewicz,
see, e.g., the book by Rosenbloom~\cite{Rosenbloom-1950-EML},
and introduced to combinatorics seemingly for the first time
by Raney~\cite{Raney-1960-FCP}.
Traditionally, they are defined as words
that correspond to walks confined to the upper half plane,
with the exception of their very last step,
which has to be $(1,-1)$ and passes below the axis.
Keeping the last SE step implicit makes a bijection to our class more direct,
and our $p$-Łukasiewicz walks have been studied for instance
by Brak, Iliev, and Prellberg
under the name of “$(0, p)$-restricted Łukasiewicz paths”%
~\cite{BrakIlievPrellberg-2011-IFA},
as models of polymer adsorption.
The traditional form is better suited to studying
their recursive, tree-like structure,
and applications to random sampling and random generation of trees
are to be found for instance in~\cite{BodiniDavidMarchal-2016-RBO}.

Although we claim a proximity of our work to automata theory,
we will not use more than its vocabulary in most of the present article.
We will indeed describe the procedure implementing our bijections
in a more relaxed way,
hiding their automatic content in a presentation by “boxes”
(see Sections~\ref{sec:3-step-model}--\ref{sec:six-step}).
The interested reader will wait until Section~\ref{sec:automata}
to see a formal comparison of our presentation to classical automata theory.

A combinatorial explanation of the factor~$2^n$
in the symmetric step-set model
was already given by Yeats~\cite{Yeats-2014-BBC}.
Her approach shares similarities
with the work by Eu, Fu, Hou, and Hsu
for the simpler tandem model.
For comparison's sake, we provide it here as well,
together with a generalization to $p$-tandem walks
(Section~\ref{sec:raising}).

Remarkably enough, similar bijective results appeared recently
in an ongoing work by Bousquet-Mélou, Fusy, and Raschel%
~\cite{BousquetMelouFusyRaschel-2017-OBC}.
There, the authors use a recent bijection
between the class of all tandem walks for all~$p$
and the class of so-called marked bipolar-oriented planar maps,
by Kenyon, Miller, Sheffield, and Wilson%
~\cite{KenyonMillerSheffieldWilson-2016-BOP}.
Our bijections are different from those based on maps,
with no clearly studied potential connection so far.
This is discussed further in Section~\ref{sec:conclusions}.

We see three main benefits to our bijections.  First of all, they are described algorithmically with only a single pass through the input word.  This makes it clear they have short, fast implementations.  The single pass algorithmic description leads to nicer, clearer proofs than the approach of \cite{Yeats-2014-BBC}, and also made the generalization to larger step sets possible.  Then, after the fact, the bijections for larger step sets could also be given an alternate explanation in the spirit of \cite{Yeats-2014-BBC}.  This gives us two ways, each with different benefits, to understand all of our bijections.

\subsection{General setup. Notation}\label{sec:notation}


All of our bijections are described algorithmically
and proceed in a similar fashion:
to compute the image of a given word~$w = w_1\dots w_n$,
a counter~$c$ is introduced with values associated to the prefixes of~$w$,
and an output word~$\bar w = \bar w_1\dots \bar w_n$
is computed as the image of~$w$
by computing $(\bar w_i, c_i)$ from $(w_{i-1}, c_{i-1})$
for successive values of~$i$.
The value of $(\bar w_i, c_i)$ depends on $(w_{i-1}, c_{i-1})$
in a deterministic way,
according to a piecewise definition
given by possible transitions.
We describe such transitions by boxes in what follows.


Steps in 2-dimensional step sets may be depicted by arrows,
which then represent vectors with integer coordinates.
Step sets made of such steps are named using the letter~$S$.
Step sets for 1-dimensional walks are subsets of~$\bZ$.
They are named using the letter~$\Sigma$.
We will freely identify such a step set~$\Sigma$
with the set of vectors~$\{1\}\times\Sigma$.
For future reference,
we summarize the use of these alphabets in the following list,
together with the specific classes of words that we will build on them.
For this description, given an alphabet~$A$,
$A^\ast$~denotes as usual the set of finite words on~$A$,
including the empty word~$\epsilon$,
and we write~$w' \leq w$ when $w'$~is a prefix of~$w$,
and $|w|_a$~for the number of occurrences of the letter~$a$ in~$w$.

\bigskip

\begin{easylist}[itemize]
& $\Sigma_1 = \{-1,0,+1\} \simeq \{\, \ldir{3bb}, \ldir{2bb}, \ldir{1bb} \,\}$.
   Motzkin words are those words on~$\Sigma_1$
   whose prefixes contain at least as many up steps as down steps:
   \begin{equation*}
   \mathcal M =
   \{ w \in \Sigma_1^\ast :
     |w|_{\sldir{1bb}} = |w|_{\sldir{3bb}} \text{ and }
     \forall w', \ w' \leq w \Rightarrow
       |w'|_{\sldir{1bb}} \geq |w'|_{\sldir{3bb}} \} .
   \end{equation*}
& $S_1 = \{\, \ldir{0bb}, \ldir{6bb}, \ldir{3bb} \,\}$.
&& Tandem words are classically those words on~$S_1$
   giving rise to a walk in the quarter plane.
   We will also call them quarter-plane 1-tandem words:
   \begin{equation*}
   \phantom{\rule{20mm}{0pt}}\mathcal Q =
   \{ w \in S_1^\ast :
     \forall w', \ w' \leq w \Rightarrow
       |w'|_{\sldir{0bb}} \geq |w'|_{\sldir{3bb}} \geq |w'|_{\sldir{6bb}} \} .
   \end{equation*}
&& By relaxing the quarter-plane constraint,
   but forcing the final height to be $0$,
   we obtain what we will call half-plane 1-tandem words:
   \begin{equation*}
   \phantom{\rule{20mm}{0pt}}\mathcal H =
   \{ w \in S_1^\ast :
     |w|_{\sldir{0bb}} = |w|_{\sldir{3bb}} \text{ and }
     \forall w', \ w' \leq w \Rightarrow
       |w'|_{\sldir{0bb}} \geq |w'|_{\sldir{3bb}} \} .
   \end{equation*}
   The bijection $\mathcal M \simeq \mathcal H$ is obvious.
& $\{1,2,3\}$.
   We restrict the classical class of Yamanouchi words
   (using any letters from~$\bN^\ast$)
   to those words using only the first three integers:
   \begin{equation*}
   \mathcal Y_3 =
   \{ w \in \{1,2,3\}^\ast :
     \forall w', \ w' \leq w \Rightarrow |w'|_1 \geq |w'|_2 \geq |w'|_3 \} .
   \end{equation*}
   The bijection $\mathcal Y_3 \simeq \mathcal Q$ is obvious.
& $S_{1,\text{sym}} = \{\, \ldir{0bb}, \ldir{6bb}, \ldir{3bb},
                           \ldir{4bb}, \ldir{2bb}, \ldir{7bb} \,\}$.
   By augmenting the step set of classical tandem walks by symmetry,
   we obtain a superset~$\mathcal Q^{\text{sym}}$ of~$\mathcal Q$:
   \begin{multline*}
   \phantom{\rule{15mm}{0pt}}\mathcal Q^{\text{sym}} =
   \{ w \in S_{1,\text{sym}}^\ast :
     \forall w', \ w' \leq w \Rightarrow \\
       |w'|_{\sldir{0bb}} + |w'|_{\sldir{7bb}} \geq |w'|_{\sldir{3bb}} + |w'|_{\sldir{4bb}} \text{ and }
       |w'|_{\sldir{3bb}} + |w'|_{\sldir{2bb}} \geq |w'|_{\sldir{6bb}} + |w'|_{\sldir{7bb}} \} .
   \end{multline*}
& $\Sigma_{1,\text{bicol}} =
  \{ {-\boldsymbol{1}}, {\boldsymbol{0}}, {+\boldsymbol{1}},
     {-\mathbb{1}}, {\mathbb{0}}, {+\mathbb{1}} \}
  \simeq
  \{\, \ldir{1Lb}, \ldir{2Lb}, \ldir{3Lb},
       \ldir{1Rb}, \ldir{2Rb}, \ldir{3Rb} \,\}$.
   For a bicolouring of Motzkin words,
   we use the two colours “solid” and “striped”:
   \begin{multline*}
   \phantom{\rule{15mm}{0pt}}\mathcal M^{\text{bicol}} =
   \{ w \in \Sigma_{1,\text{bicol}}^\ast :
     |w|_{\sldir{1Lb}} + |w|_{\sldir{1Rb}} = |w|_{\sldir{3Lb}} + |w|_{\sldir{3Rb}} \text{ and } \\
     \forall w', \ w' \leq w \Rightarrow
       |w'|_{\sldir{1Lb}} + |w'|_{\sldir{1Rb}} \geq |w'|_{\sldir{3Lb}} + |w'|_{\sldir{3Rb}} \} .
   \end{multline*}
& $\Sigma_p = \{-1,0,1,\dots,p\}$.
   Our $p$-Łukasiewicz words generalize Motzkin words
   by allowing longer up steps:
   \begin{multline*}
   \phantom{\rule{20mm}{0pt}}\mathcal L_p =
   \{ w \in \Sigma_p^\ast :
     |w|_1 + 2|w|_2 + \dots + p|w|_p = |w|_{-1} \\
     \forall w', \ w' \leq w \Rightarrow |w'|_1 + 2|w'|_2 + \dots + p|w'|_p \geq |w'|_{-1} \} .
   \end{multline*}
   In particular, $\mathcal L_1 = \mathcal M$.
& $S_p = \{\, \pdir{p}, \dots, \pdir{m}, \dots, \pdir{0}, \pdir{-} \,\}$.
   Our quarter-plane $p$-tandem words generalize classical tandem words
   by allowing longer steps into the north-west quadrant:
   \begin{multline*}
   \phantom{\rule{20mm}{0pt}}\mathcal T_p =
   \{ w \in S_p^\ast : \forall w', \ w' \leq w \Rightarrow \\
     \textstyle 
     \sum_{j=0}^p (p-j) |w'|_{(-j,p-j)} \geq |w'|_{(1,-1)} \geq
     \sum_{j=0}^p j |w'|_{(-j,p-j)} \} .
   \end{multline*}
   In particular, $\mathcal T_1 = \mathcal Q$.
& $A_p := \{ a_{\ell,m} : 0 \leq \ell+m \leq p-1 \}$.
   We will use the class~$A_p^\ast$ of unrestricted finite words on~$A_p$
   to represent generalized counters (stack contents).
\end{easylist}

\subsection{Outline of the paper. Contributions}\label{sec:outline}

Section~\ref{sec:3-step-model} discusses the case~$p = 1$.
It is bibliographical,
along the lines of \cite{BostanChyzakMahboubi--BBC,Chyzak-2016-ALB}.
(But we prefer the present description in terms of counters
over the equivalent description in terms of infinite automata
developed in~\cite{BostanChyzakMahboubi--BBC}.)
There, we provide a bijection $\mathcal H \simeq \mathcal Q$
between half-plane and quarter-plane words in the 1-tandem case.
The bijection is essentially the bijection $\mathcal M \simeq \mathcal Y_3$
between Motzkin words and (restricted) Yamanouchi words,
owing to the trivial relations
$\mathcal M \simeq \mathcal H$ and $\mathcal Y_3 \simeq \mathcal Q$.
This description is given without proofs,
as it can be viewed as a special case of the case of general~$p$.
This new result is a bijection
$\mathcal L_p \simeq \mathcal T_p$ for $1 \leq p \in \bN$.
It is developed in full in Section~\ref{sec:p-tandem}.
For the proofs, we introduce \emph{parameters}
attached to the constructions,
and we study their variations and invariants.
We then develop the 6-step model
and the corresponding bijection
$\mathcal M^{\text{bicol}} \simeq \mathcal Q^{\text{sym}}$
in a more sketchy way in Section~\ref{sec:six-step},
providing just what has to be adapted to obtain complete proofs.
Alternate interpretations follow,
via explicit automata in Section~\ref{sec:automata}
and via raisings of steps in Section~\ref{sec:raising}.

Although we have developped the automata approach of the present article
as a more intuitive explanation of the phenomenon for some of us,
other readers may prefer the approach by raising followed in~\cite{Yeats-2014-BBC}.
To save those readers from waiting until Section~\ref{sec:raising},
we work out in Section~\ref{sec:worked-example}
an example of execution of the automata and raising algorithms
to transform an explicit quarter-plane walk to the corresponding Motzkin walk.
We provide this comparison without any formalism and any proof of the bijections.
We thus propose two reads of the article.
Most readers will consider a linear read of it,
possibly skipping Section~\ref{sec:worked-example}
and postponing its read until after reading Section~\ref{sec:raising}.
On the other hand, readers already aware of the raising approach
will probably prefer reading the worked example in Section~\ref{sec:worked-example}
before jumping directly to Section~\ref{sec:raising}
and returning to Section~\ref{sec:3-step-model} when enclined to do so.

The interested reader will find a Julia implementation of the algorithms
described in this article,
both on Chyzak's web page and as ancillary files to the arXiv preprint of this work.

\subsection{Worked example}\label{sec:worked-example}

\begin{figure}
\resizebox{\textwidth}{!}{%
\begin{tabular}{cc@{\color{white}\rule{10mm}{9mm}}cc@{\color{white}\rule{10mm}{9mm}}c}
\exMprefix1 & \exQprefix1 & \exQsuffix9 & \exMsuffix9 & \exHsuffix9 \\
\exMprefix2 & \exQprefix2 & \exQsuffix8 & \exMsuffix8 & \exHsuffix8 \\
\exMprefix3 & \exQprefix3 & \exQsuffix7 & \exMsuffix7 & \exHsuffix7 \\
\exMprefix4 & \exQprefix4 & \exQsuffix6 & \exMsuffix6 & \exHsuffix6 \\
\exMprefix5 & \exQprefix5 & \exQsuffix5 & \exMsuffix5 & \exHsuffix5 \\
\exMprefix6 & \exQprefix6 & \exQsuffix4 & \exMsuffix4 & \exHsuffix4 \\
\exMprefix7 & \exQprefix7 & \exQsuffix3 & \exMsuffix3 & \exHsuffix3 \\
\exMprefix8 & \exQprefix8 & \exQsuffix2 & \exMsuffix2 & \exHsuffix2 \\
\exMprefix9 & \exQprefix9 & \exQsuffix1 & \exMsuffix1 & \exHsuffix1 \\[-2mm]
(a) & (b) & (c) & (d) & (e) \\
\multicolumn{2}{c}{\rule{0pt}{20pt}\parbox{.38\textwidth}{\centering Raising approach \\ (execution from top to bottom)}} &
  \multicolumn{2}{c}{\rule{0pt}{20pt}\parbox{.38\textwidth}{\centering Transducer approach \\ (execution from bottom to top)}} &
  \parbox{.15\textwidth}{\centering Alternative \\ encoding}
\end{tabular}
}
\caption{\label{fig:early-comparison}
  Computing the Motzkin walk associated with a given quarter-plane plane:
  comparison of the traditional raising approach and the new transducer approach.
  \emph{Raising algorithm}: input on the right~(b), output on the left~(a).
  These columns (a) and~(b) display the same data as Figure~\ref{fig raising eg}.
  This exemplifies the algorithm provided formally
  as Figure~\ref{fig:algo-p=1-by-raisings} in Section~\ref{sec:tandem-by-raising},
  with the exception that the raisability marks are not shown here.
  \emph{Transducer approach}: input on the left~(c), output on the right~(d).
  This algorithm is provided formally in Section~\ref{sec:3-step-model} (map~$\psi$)
  and generalized to long up steps in Section~\ref{sec:p-tandem} (map~$\Psi_p$).
  Column~(e) provides the walks homologous to those of column~(d)
  under the trivial bijection that exchanges
  \protect\ldir{0bb} and \protect\ldir{1bb}, and \protect\ldir{6bb} and \protect\ldir{2bb}.}
\end{figure}

The difference between the traditional approach by raising
and our new approach by transducers
is most striking in the divergence of their intermediate calculations
when converting an input walk in the quarter plane
with steps from $\{\, \ldir{0bb}, \ldir{6bb}, \ldir{3bb} \,\}$
into a walk in the upper half plane using the same steps
and returning to the $x$-axis,
or, equivalently, into a Motzkin walk using steps from
$\{\, \ldir{3bb}, \ldir{2bb}, \ldir{1bb} \,\}$.
Both approaches realize the map~$\psi$
that we will define and describe using transducers in Section~\ref{sec:p-tandem},
and that we will describe using raising in Section~\ref{sec:tandem-by-raising}.

In order to transform a quarter-plane walk,
like the one at the bottom of column~(b),
the traditional approach is to read its steps from beginning to end,
inducing the prefixes that can be seen from top to bottom in that column,
and to progressively generate the corresponding Motzkin walks in column~(a).
To maintain the Motzkin property,
this induces the so-called raising of an earlier step
when the added step would otherwise go below the $x$-axis.
Such raising instances are marked by round arrows and shift
the rest of the walk vertically by one.
The global transformation returns the final Motzkin walk,
given at the bottom of column~(a).

The approach introduced in the present work is
to read the quarter-plane walk backwards,
thus considering the suffixes that appear from bottom to top in column~(c).
This family of walks has no obvious property
with regard to a quarter plane or even a half plane,
and so we have not bothered to translate them in any way,
so as to have the origin as final vertex for example.
The new transformation is depicted in column~(d),
which shows suffixes of the final Motzkin walk.
Observe that those suffixes are not altered after they have been produced
in any other way than prepending another step.
In the end, the completed Motzkin walks obtained by both algorithms
and drawn at the bottom of column~(a) and top of column~(d)
are the same.
If we insist on writing upper-half-plane walks in terms of the same allowed steps
as for the quarter-plane walks,
we obtain in column~(e)
the suffixes of an upper-half-plane walk returning to the $x$-axis.
The fact that the complete walk, at the top of column~(e),
returns to the origin is specific to the example,
in relation to the quarter-plane walk ending on the main diagonal.

\section{Three-step model}\label{sec:3-step-model}

We start with the three-step model,
to obtain a bijection $\mathcal H \simeq \mathcal Q$,
which we realize as two converse bijective maps
$\phi: \mathcal H \to \mathcal Q$
and $\mathcal Q \to \mathcal H$.
This implicitly contains the bijection $\mathcal M \simeq \mathcal Y_3$,
which is the foundation of the case~$p = 1$ in the literature.

Following the strategy announced in the introduction,
we introduce the transitions that we use to define
the forward map~$\phi$ and the backward map~$\psi$.
This is done by the 6 boxes on the right of Figure~\ref{fig:phi}
and the 5 boxes on the right of Figure~\ref{fig:psi}.
We describe the meaning of the boxes using the notation introduced by the box~(TR) on their left.
There:
\begin{itemize}
\item \ldir{{$a$}sb} and~\ldir{{$\bar a$}sb} stand for letters from the alphabet
  $S_1 := \{\, \ldir{0bb}, \ldir{6bb}, \ldir{3bb} \,\}$;
\item $\mu$~is the $y$-coordinate of the letter~$a$,
  that is, the vertical displacement caused by~$a$,
  with value~$\mu$ in $\{ -1, 0, +1 \}$;
\item \ldir{{$m$}sb}~stands for a letter from the alphabet
  $\Sigma_1 := \{\, \ldir{1bb}, \ldir{2bb}, \ldir{3bb} \,\}$;
\item $(h_L, v_L)$ and~$(h_R, v_R)$ are pairs of counters, from~$\bN^2$
  (henceforth, we will simply say “counter” for short,
  instead of “pair of counters”);
\item $\chi$~stands for an integer from~$\{1, 2, 3\}$.
\end{itemize}

\begin{figure}
\noindent\ \hfill
\trsIIIstep in:{$a$}sb/$\mu$/{$m$}sb, h:$h_L$->$h_R$, v:$v_L$->$v_R$, out:$\chi$/{$\bar a$}sb (TR).
\ \hfill\
\trsIIIstep in:0bb/$+1$/1bb,  h:$\alpha$->$\alpha$,   v:$\beta$->$\beta{+}1$, out:1/0bb   (U1).
\,
\trsIIIstep in:6bb/0/2bb,     h:$\alpha$->$\alpha{+}1$, v:$\beta{+}1$->$\beta$, out:2/3bb   (L2).
\,
\trsIIIstep in:6bb/0/2bb,     h:$\alpha$->$\alpha$,   v:0->0,               out:1/0bb   (L1).
\,
\trsIIIstep in:3bb/$-1$/3bb,  h:$\alpha{+}1$->$\alpha$, v:$\beta$->$\beta$,   out:3/6bb   (D3).
\,
\trsIIIstep in:3bb/$-1$/3bb,  h:0->0,                 v:$\beta{+}1$->$\beta$, out:2/3bb   (D2).
\,\hspace{1pt}
\trsIIIstep in:3bb/$-1$/3bb,  h:0->--,                v:0->--,              out:{err.}/--n (DE).
\hfill\
\caption{\label{fig:phi}
  Transitions used to define the forward map $\phi$:
  the letter~\protect\ldir{{$m$}sb} is read from the input Motzkin word~$w$;
  the letter~$\chi$ is written to the output Yamanouchi word~$\bar w$.
  At the top, the rows \protect\ldir{{$a$}sb} and~$\mu$ present
  alternate encodings of~\protect\ldir{{$m$}sb}
  under obvious bijections.
  At the bottom, the row \protect\ldir{{$\bar a$}sb} presents
  an alternate encodings of~$\chi$.
  A transition named (X$\chi$) reads a letter~\protect\ldir{{$m$}sb} indicated by~X:
  respectively \protect\ldir{1bb}, \protect\ldir{2bb}, \protect\ldir{3bb}, if X~stands for U(p), L(evel), D(own).
  With the exception of (DE), all transitions (X$\chi$) output the letter~$\chi$.}
\end{figure}

\begin{figure}
\noindent\ \hfill
\,
\trsIIIstep in:{$a$}sb/$\mu$/{$m$}sb, h:$h_L$->$h_R$, v:$v_L$->$v_R$, out:$\chi$/{$\bar a$}sb (TR).
\ \hfill\
\trsIIIstep in:6bb/0/2bb,    h:$\alpha$->$\alpha$,     v:0->0,                 out:1/0bb (1L).
\,\hspace{3pt}
\trsIIIstep in:0bb/$+1$/1bb, h:$\alpha$->$\alpha$,     v:$\beta$->$\beta{+}1$, out:1/0bb (1U).
\,
\trsIIIstep in:3bb/$-1$/3bb, h:0->0,                   v:$\beta{+}1$->$\beta$, out:2/3bb (2D).
\,
\trsIIIstep in:6bb/0/2bb,    h:$\alpha$->$\alpha{+}1$, v:$\beta{+}1$->$\beta$, out:2/3bb (2L).
\,
\trsIIIstep in:3bb/$-1$/3bb, h:$\alpha{+}1$->$\alpha$, v:$\beta$->$\beta$,     out:3/6bb (3D).
\,\hspace{1pt}
\phantom{\trsIIIstep in:3bb/$-1$/3bb,  h:0->--,                v:0->--,              out:{err.}/--n (DE).}
\hfill\
\caption{\label{fig:psi}
  Transitions used to define the backward map $\psi$.
  Each transition named ($\chi$X) is the reverse of the transition named (X$\chi$) in Figure~\ref{fig:phi}.
  The letter~$\chi$ is read from the input Yamanouchi word~$\bar w$;
  the letter~\protect\ldir{{$m$}sb} is written to the output Motzkin word~$w$.
  Alternate representations are as described in that figure.}
\end{figure}

The forward map~$\phi$ converts
a walk of the 3-step model~$\mathcal H$ in the upper half plane
to a walk of the 3-step model~$\mathcal Q$ in the quarter plane.
To ease its definition, the map is defined as a partial function
of any word from~$S_1^\ast$.  This is convenient because
the construction potentially leads to an error.
The map is defined in terms of the 6~forward transitions (U1) to~(DE)
listed in Figure~\ref{fig:phi}.
Any application of (U1) to~(D2)
consumes a letter \ldir{{$a$}sb} of the 3-step model
as well as a counter~$(h_L, v_L) \in \bN^2$,
and produces a new counter~$(h_R, v_R)$
together with a letter \ldir{{$\bar a$}sb} of the 3-step model.
The values of the counters $(h_L, v_L)$ and~$(h_R, v_R)$ in the boxes
are given as patterns,
where $\alpha$ and~$\beta$ have to be viewed as matching nonnegative integers.
So, for example, if the input step is \ldir{6bb} and the counters are $(h_L, v_L)$, then we use transition (L2) if $v_L>0$ and (L3) if $v_L=0$.
The special transition~(DE) results in an error,
instead of producing a new counter and a new letter.
The triples of consumed letters~\ldir{{$a$}sb} and patterns
for $h_L$ and~$v_L$
over all 6~boxes of Figure~\ref{fig:phi}
cover all possible input cases from $S_1 \times \bN^2$.

Given a word $w = w_1\dots w_n \in S_1^\ast$ of length~$n$,
the forward transform~$\phi(w)$ is now defined as follows.
Set $(\sLR{h}_0,\sLR{v}_0) = (0,0)$.
For $i$ from~1 to~$n$, find the only forward transition
that can be applied to the letter~$w_i$
and the counter~$(\sLR{h}_{i-1}, \sLR{v}_{i-1})$.
If this is (DE), then $\phi$~is not defined at~$w$.
Otherwise, apply the transition
to set~$(\sLR{h}_i, \sLR{v}_i)$
to the returned counter and $\bar w_i$~to the returned letter.
If the loop terminates without using the transition~(DE),
then $\phi(w)$~is defined as $\bar w_1\dots \bar w_n$.

The backward map~$\psi$ is defined similarly
using the backward transitions (1L) to~(3E)
listed in Figure~\ref{fig:psi},
with the added simplicity that no error can occur.
Applications of backward transitions
consume a letter \ldir{{$\bar a$}sb} and a counter~$(h_R, v_R)$,
and produce a counter~$(h, v)$ together with a letter \ldir{{$a$}sb}.
Again, the triples~$(\ldir{{$\bar a$}sb}, h_R, v_R)$ cover all possible input cases
from $S_1 \times \bN^2$.
(For other models in future sections, we will not list the backward transitions separately since they are the same boxes interpreted in reverse.)

Given a word $\bar w = \bar w_1\dots \bar w_n \in S_1^\ast$ of length~$n$,
the backward transform~$\psi(\bar w)$ is now defined as follows.
Set $(\sRL{h}_n,\sRL{v}_n) = (0,0)$.
For $i$ from~$n$ down to~1, find the only backward transition
that can be applied to the letter~$\bar w_i$
and the counter~$(\sRL{h}_i, \sRL{v}_i)$,
so as to define a letter~$w_i$
and a counter~$(\sRL{h}_{i-1}, \sRL{v}_{i-1})$.
Then $\psi(\bar w)$~is defined as~$w_1\dots w_n$.

One can prove that, after calculating some~$\phi(w)$,
the last counter~$(\sLR{h}_n, \sLR{v}_n)$ is~$(0, 0)$
if and only if $w$~is a Motzkin word.
Likewise, after calculating some~$\psi(\bar w)$,
the last counter~$(\sRL{h}_0, \sRL{v}_0)$ is~$(0, 0)$
if and only if $\bar w$~is a quarter-plane word.
Also, one can prove that if some
$(\sLR{h}_i, \sLR{v}_i, \bar w_i)$
is the result of applying the forward transition~(U1),
then the only backward transition that can be applied to this result,
in a right to left process having
this specific letter~$\bar w_i$ at position~$i$,
is~(1U).
Like properties hold for all other natural pairs of forward and backward transitions.
The proof that $\phi$ and~$\psi$ are inverse bijections
between Motzkin words of~$\mathcal M$ and quarter-plane words of~$\mathcal Q$
follows from this local reversibility property.
We do not exhibit those proofs here as they will be seen as special cases
of the case of a general~$p$ in Section~\ref{sec:p-tandem}.

Because of an interpretation specific to the case~$p = 1$
and in view of the generalization to larger~$p$ in Section~\ref{sec:p-tandem},
the input and output letters are also given in the boxes in different alphabets.
Three-step walks in the upper half plane are in bijection
with Motzkin words,
by replacing each~\ldir{0bb} with a~\ldir{1bb},
each~\ldir{6bb} with a~\ldir{2bb},
and leaving all~\ldir{3bb}'s unchanged.
The letter~\ldir{{$m$}sb} corresponding to~\ldir{{$a$}sb} is given in the boxes,
together with its vertical displacement~$\mu$.
Here, the letters are those for Motzkin walks,
implying~$-1 \leq \mu \leq 1$,
but the bijection in Section~\ref{sec:p-tandem} will use $-1 \leq \mu \leq p$,
so this notation will be preferred.
On the other hand, the constraint on a 3-step walk
to remain in the quarter plane
leads to each prefix of such a 3-step walk having
more \ldir{0bb}'s than \ldir{3bb}'s and
more \ldir{3bb}'s than \ldir{6bb}'s.
Replacing \ldir{0bb}, \ldir{3bb}, and~\ldir{6bb} respectively with 1, 2, and~3
puts walks in the quarter plane in bijection with Yamanouchi words.
The bijection for~$p = 1$ is thus essentially a bijection
between Motzkin words and Yamanouchi words using only 1s, 2s, and~3s.

\begin{figure}

\centerline{%
\begin{mypic}
\foreach \x in {0,1,2,3,4,5,6,7,8,9,10,11,12,13,14,15,16,17,18,19,20,21,22,23,24,25}
  \foreach \y in {0,1,2,3,4}
    \fill(\x,\y) circle[radius=2pt];
\draw[->](0,0)--(26,0);
\draw[->](0,0)--(0,5);
\draw(-1,1.5) node{$w$};
\draw(-1,-.7) node{\tiny tr};
\draw(-1,-1.5) node{\tiny$h$};
\draw(-1,-2) node{\tiny$v$};
\draw(-1,-2.7) node{$\bar w$};
\draw[thick,->](0,0)--(1,1);    \transition{0}{U1}{0}{0}{1}
\draw[thick,->](1,1)--(2,0);    \transition{1}{D2}{0}{1}{2}
\draw[thick,->](2,0)--(3,1);    \transition{2}{U1}{0}{0}{1}
\draw[thick,->](3,1)--(4,2);    \transition{3}{U1}{0}{1}{1}
\draw[thick,->](4,2)--(5,1);    \transition{4}{D2}{0}{2}{2}
\draw[thick,->](5,1)--(6,0);    \transition{5}{D2}{0}{1}{2}
\draw[thick,->](6,0)--(7,0);    \transition{6}{L1}{0}{0}{1}
\draw[thick,->](7,0)--(8,0);    \transition{7}{L1}{0}{0}{1}
\draw[thick,->](8,0)--(9,1);    \transition{8}{U1}{0}{0}{1}
\draw[thick,->](9,1)--(10,2);   \transition{9}{U1}{0}{1}{1}
\draw[thick,->](10,2)--(11,2);  \transition{10}{L2}{0}{2}{2}
\draw[thick,->](11,2)--(12,3);  \transition{11}{U1}{1}{1}{1}
\draw[thick,->](12,3)--(13,3);  \transition{12}{L2}{1}{2}{2}
\draw[thick,->](13,3)--(14,2);  \transition{13}{D3}{2}{1}{3}
\draw[thick,->](14,2)--(15,2);  \transition{14}{L2}{1}{1}{2}
\draw[thick,->](15,2)--(16,1);  \transition{15}{D3}{2}{0}{3}
\draw[thick,->](16,1)--(17,1);  \transition{16}{L1}{1}{0}{1}
\draw[thick,->](17,1)--(18,2);  \transition{17}{U1}{1}{0}{1}
\draw[thick,->](18,2)--(19,1);  \transition{18}{D3}{1}{1}{3}
\draw[thick,->](19,1)--(20,0);  \transition{19}{D2}{0}{1}{2}
\draw[thick,->](20,0)--(21,0);  \transition{20}{L1}{0}{0}{1}
\draw[thick,->](21,0)--(22,1);  \transition{21}{U1}{0}{0}{1}
\draw[thick,->](22,1)--(23,0);  \transition{22}{D2}{0}{1}{2}
\draw[thick,->](23,0)--(24,0);  \transition{23}{L1}{0}{0}{1}
\draw[thick,->](24,0)--(25,0);  \transition{24}{L1}{0}{0}{1}
                                \transition{25}{}{0}{0}{}
\foreach \x in {0,1,2,3,4,5,6,7,8,9,10,11,12,13,14,15,16,17,18,19,20,21,22,23,24,25}
  \draw(\x,-3.5) node{\tiny\x};
\draw(-1,-3.5) node{\tiny$i$};
\end{mypic}
}

\caption{An example of a Motzkin walk~$w$ (letters \protect\ldir{{$m$}sb}) and of a Yamanouchi word~$\bar w$ (letters~$\chi$),
  both of length~25,
  associated with one another by the maps $\phi$ and~$\psi$.
  The row marked `tr' indicates the transitions used by the LR process,
  computing~$\bar w$ from~$w$.
  For the computation of~$w$ from~$\bar w$ by the RL process, each transition is replaced by its reverse form, according to Figure~\ref{fig:phi}.}
\label{fig:bij-ex-motzkin-yamanouchi}

\end{figure}

\begin{figure}

\noindent
\rule{12mm}{0mm}
%
\begin{mypic}
\foreach \x in {-2,-1,0,1,2,3,4,5}
  \foreach \y in {0,1,2,3,4,5,6}
    \fill(\x,\y) circle[radius=2pt];
\draw[->](-3,0)--(6,0);
\draw[->](0,0)--(0,7);
\draw[thick](0,0)--(0,1);
\draw[thick](0,1)--(1,0);
\draw[thick](1,0)--(1,1);
\draw[thick](1,1)--(1,2);
\draw[thick](1,2)--(2,1);
\draw[thick](2,1)--(3,0);
\draw[thick](3,0)--(2,0);
\draw[thick](2,0)--(1,0);
\draw[thick](1,0)--(1,1);
\draw[thick](1,1)--(1,2);
\draw[thick](1,2)--(0,2);
\draw[thick](0,2)--(0,3);
\draw[thick](0,3)--(-1,3);
\draw[thick](-1,3)--(0,2);
\draw[thick](0,2)--(-1,2);
\draw[thick](-1,2)--(0,1);
\draw[thick](0,1)--(-1,1);
\draw[thick](-1,1)--(-1,2);
\draw[thick](-1,2)--(0,1);
\draw[thick](0,1)--(1,0);
\draw[thick](1,0)--(0,0);
\draw[thick](0,0)--(0,1);
\draw[thick](0,1)--(1,0);
\draw[thick](1,0)--(0,0);
\draw[thick](0,0)--(-1,0);
\foreach \x / \y / \t in {.25/.25/0, .25/1.25/1, 1.25/.25/2, 1.25/1.25/3, 1.25/2.25/4, 2.25/1.25/5, 3.25/.25/6, 2.25/.25/7, .25/2.25/11, .25/3.25/12, -.75/3.25/13, -.75/2.25/15, -.75/1.25/17, -.75/.25/25}
  \draw(\x,\y) node{\tiny\t};
\end{mypic}
\hfill
%
\begin{mypic}
\foreach \x in {0,1,2,3,4,5,6}
  \foreach \y in {0,1,2,3,4,5,6}
    \fill(\x,\y) circle[radius=2pt];
\draw[->](0,0)--(7,0);
\draw[->](0,0)--(0,7);
\draw[dashed](0,0)--(6.5,6.5);
\draw[thick](0,0)--(0,1);
\draw[thick](0,1)--(1,0);
\draw[thick](1,0)--(1,1);
\draw[thick](1,1)--(1,2);
\draw[thick](1,2)--(2,1);
\draw[thick](2,1)--(3,0);
\draw[thick](3,0)--(3,1);
\draw[thick](3,1)--(3,2);
\draw[thick](3,2)--(3,3);
\draw[thick](3,3)--(3,4);
\draw[thick](3,4)--(4,3);
\draw[thick](4,3)--(4,4);
\draw[thick](4,4)--(5,3);
\draw[thick](5,3)--(4,3);
\draw[thick](4,3)--(5,2);
\draw[thick](5,2)--(4,2);
\draw[thick](4,2)--(4,3);
\draw[thick](4,3)--(4,4);
\draw[thick](4,4)--(3,4);
\draw[thick](3,4)--(4,3);
\draw[thick](4,3)--(4,4);
\draw[thick](4,4)--(4,5);
\draw[thick](4,5)--(5,4);
\draw[thick](5,4)--(5,5);
\draw[thick](5,5)--(5,6);
\foreach \x / \y / \t in {.25/.25/0, .25/1.25/1, 1.25/.25/2, 1.25/1.25/3, 1.25/2.25/4, 2.25/1.25/5, 3.25/.25/6, 3.25/1.25/7, 3.25/2.25/8, 3.25/3.25/9, 3.25/4.25/10, 4.25/3.25/11, 4.25/4.25/12, 5.25/3.25/13, 5.25/2.25/15, 4.25/2.25/16, 4.25/5.25/22, 5.25/4.25/23, 5.25/5.25/24, 5.25/6.25/25}
  \draw(\x,\y) node{\tiny\t};
\end{mypic}
\rule{12mm}{0mm}

\caption{Motzkin walk of Figure~\ref{fig:bij-ex-motzkin-yamanouchi}
  interpreted as an upper-half-plane walk (left)
  and Yamanouchi word of Figure~\ref{fig:bij-ex-motzkin-yamanouchi}
  interpreted as a quarter-plane walk (right).}
\label{fig:bij-ex-as-walks}

\end{figure}

\begin{example}\label{ex:bij-ex-motzkin-yamanouchi}
Figure~\ref{fig:bij-ex-motzkin-yamanouchi} is an example of the bijection,
expressed as a relation between a Motzkin walk and a Yamanouchi word.
The letters $w_i = \ldir{{$m$}sb}$ (at the top) have been glued into a walk~$w$.  The corresponding letters $\bar w_i = \chi$ are given at the bottom,
below the counters $h$ and~$v$.
The index~$i$ on the final row mark the “time” in the algorithmic transformations.
Specifically, the value of~$h$, respectively, of~$v$, in the same column as a value of~$i$
are the values of $\sLR{h}_i$ or~$\sRL{h}_i$,
respectively, of $\sLR{v}_i$ or~$\sRL{v}_i$,
in our previous description.
Pay attention to how a value $\chi = \bar w_i$
appears between times $i-1$ and~$i$.

Interpreted as a bijection between walks in the 3-step models,
the bijection maps the upper-half-plane walk on the left
of Figure~\ref{fig:bij-ex-as-walks}
to the quarter-plane walk on the right.
To help the reader understand the order of these tangled walks,
we have put the time of first reach close to each position.
This corresponds to the value of~$i$
in Figure~\ref{fig:bij-ex-motzkin-yamanouchi}.
Additionally,
for the upper-half-plane walk,
8, 20 and~23 coincide with~2,
9~with~3,
10~with~4,
14~with~11,
16, 19 and~22 with~1,
18~with~15,
21 and~24 with~0;
for the quarter-plane walk,
14, 17 and~20 coincide with~11,
19~with~10,
18 and~21 with~12.

An interesting property is made apparent by this example.
Focus on those positions between times $i$ and~$i+1$
for which \ldir{{$m$}sb} = \ldir{2bb} in the Motzkin walk
and for which $\chi = 1$ in the Yamanouchi word.
In other words, consider
$i \in \{ 6, 7, 16, 20, 23, 24 \}$,
but not $i \in \{ 10, 12, 14 \}$.
Then, we observe that
the suffix of the quarter-plane walk for the position at that time~$i$
never goes back to a lower abscissa than that of its vertex marked~$i$.
(On the other hand, this is not true for 10, 12, 14.)  See the first point of Proposition~\ref{prop:observations}.
\end{example}

The reader in need of an early intuition
about our bijections between models with long steps
can compare
Figure~\ref{fig:bij-ex-motzkin-yamanouchi} (three-step model, $p = 1$)
and Figure~\ref{fig:transduction-ex-p=5} ($p = 5$).

\section{$p$-Tandem Walks}\label{sec:p-tandem}

In this section, we generalize the bijections of Section~\ref{sec:3-step-model}
to general~$p\geq1$.

\subsection{Construction of maps between $\Sigma_p^\ast$ and $S_p^\ast$}
\label{sec:cons-maps}

For the description of the bijection for general~$p$,
we proceed to introduce transitions akin to those for the case~$p=1$,
but with two kinds of generalizations needed.
\begin{itemize}
\item The letters \ldir{1bb}, \ldir{2bb}, and \ldir{3bb},
  used to encode Motzkin words for~$p=1$ and
  also represented by their $y$-coordinates
  (that is, their vertical displacements),
  respectively as $+1$, 0,~$-1$,
  need to be generalized to a set of $p+2$ letters~$\mu$,
  named by numbers from $-1$ to~$p$.
\item The letters \ldir{0bb}, \ldir{6bb}, and~\ldir{3bb},
  used for the 3-step quarter-plane model,
  need to be generalized to another set of $p+2$ letters~$\bar a$:
  the $p+1$ vectors~$(-\ell,p-\ell)$ for~$0\leq\ell\leq p$,
  together with the vector~$(1,-1)$.
  In the boxes of Figure~\ref{fig:phi-p}, the parameter~$\bar\mu$ is the $y$-coordinate of~$\bar a$.
  It also varies from $-1$ to~$p$ and could be used as another encoding.
\item The parameter~$h\in\bN$ of the case~$p=1$ needs to be generalized
  to a word~$H$ over the alphabet~$A_p$ consisting
  of the $p(p-1)/2$ letters~$a_{\ell,m}$ for $0 \leq \ell+m \leq p-1$.
  For~$p=1$, this alphabet reduces to a single letter~$a_{0,0}$,
  so that the values of~$H$ for~$p=1$ are in natural bijection
  with the possible values of~$h\in\bN$.
  This explains why a mere integer was sufficient
  in Section~\ref{sec:3-step-model}.
\end{itemize}

We could find no better way to name the transitions in Figure~\ref{fig:phi-p}
than simply naming them from~\boxref{T1} to~\boxref{T8}.
We describe their meaning using the notation introduced by the box~\boxref{TG}.
There:
\begin{itemize}
\item $\mu$~is the $y$-coordinate of a letter $(1,\mu)$ of a Łukasiewicz word,
  that is, the vertical displacement caused by it,
  with value in $\Sigma_p := \{ -1, 0, 1, \dots, p \}$;
\item $(H_L, v_L)$ and~$(H_R, v_R)$ are generalized counters,
  from~$A_p^\ast\times\bN$
  where $A_p$~is the alphabet $\{ a_{\ell,m} : 0 \leq \ell+m \leq p-1 \}$;
\item \ldir{{$\bar a$}sb} stands for a letter from the alphabet
  $S_p := \{\, \pdir{p}, \dots, \pdir{m}, \dots, \pdir{0}, \pdir{-} \,\}$;
\item $\bar\mu$~also stands for a letter from the alphabet~$\Sigma_p$.
\end{itemize}

\begin{figure}
\noindent\ \hfill
\trsptandem p:g, in:$\mu$, h:$H_L$->$H_R$, v:$v_L$->$v_R$, out:s/$\bar\mu$, cond:{side condition}, label:\boxlabel{TG}.
\ \hfill\
\trsptandem p:g, in:$p$,   h:$\Lambda$->$\Lambda$,          v:$\beta$->$\beta{+}p$,            out:0/$p$, cond:{$\phantom{f}$}, label:\boxlabel{T1}.
\,\hspace{3pt}
\trsptandem p:g, in:$q$,   h:$\Lambda$->$\Lambda$,          v:0->$q$,                          out:0/$p$, cond:{$0\leq q\leq p-1$}, label:\boxlabel{T2}.
\hfill\

\bigskip\bigskip

\noindent\ \hfill
\trsptandem p:g, in:$-1$,  h:$\Lambda a_{\ell,m}$->$\Lambda$, v:0->$\ell$,                       out:1/$p-m-1$, cond:{$\ell+m\leq p-1$}, label:\boxlabel{T3}.
\hspace{-5pt}
\trsptandem p:g, in:$-1$,  h:$\Lambda a_{\ell,m}$->$\Lambda$, v:$\beta{+}1$->$\beta{+}\ell{+}1$, out:1/$p-m-1$, cond:{$\ell+m=p-1$}, label:\boxlabel{T4}.
\hfill\

\bigskip\bigskip

\noindent\ \hfill
\trsptandem p:g, in:$q$,   h:$\Lambda$->$\Lambda a_{q,0}$,              v:$\beta{+}1$->$\beta$, out:-/$-1$, cond:{$0\leq q\leq p-1$}, label:\boxlabel{T5}.
\,\hspace{3pt}
\trsptandem p:g, in:$-1$,  h:$\Lambda a_{\ell,m}$->$\Lambda a_{\ell,m+1}$, v:$\beta{+}1$->$\beta$, out:-/$-1$, cond:{$\ell+m\leq p-2$}, label:\boxlabel{T6}.
\,\hspace{2pt}
\trsptandem p:g, in:$-1$,  h:$\epsilon$->$\epsilon$,                   v:$\beta{+}1$->$\beta$, out:-/$-1$, cond:{$\phantom{f}$}, label:\boxlabel{T7}.
\,\hspace{2pt}
\trsptandem p:g, in:$-1$,  h:$\epsilon$->--,                           v:0->--,                out:e/{},   cond:{$\phantom{f}$}, label:\boxlabel{T8}.
\hfill\
\caption{\label{fig:phi-p}
  Transitions used for the bijections for general~$p$.
  When~$p=1$, (T3) and~(T6) disappear,
  and (T1), (T2), (T4), (T5), (T7), (T8), respectively,
  become (U1), (L1), (D3), (L2), (D2), (DE), respectively, in Figure~\ref{fig:phi}.}
\end{figure}

\begin{figure}
\begin{ouralgo}
\Input{$(w_i, \sLR{H}_{i-1}, \sLR{v}_{i-1}) \in \Sigma_p \times A_p^\ast \times \bN$, where $w_i$~is the letter input in the context of counters equal to $(\sLR{H}_{i-1}, \sLR{v}_{i-1})$}
\Output{$(\sLR{H}_i, \sLR{v}_i, \bar w_i) \in A_p^\ast \times \bN \times S_p$, where $\bar w_i$~is the letter output while changing the counters to the value $(\sLR{H}_i, \sLR{v}_i)$}
\midalgo
\item if $w_i = p$, apply~\boxref{T1}
  to return $(\sLR{H}_i, \sLR{v}_i) = (\sLR{H}_{i-1}, \sLR{v}_{i-1}+p)$
  and $\bar w_i = (0, p)$,
\item\label{it:w-intermediate}
  if $w_i \geq 0$, then
  \begin{enumerate}[ref=\ref{it:w-intermediate}(\alph*)]
  \item if $\sLR{v}_{i-1} = 0$, apply~\boxref{T2}
    to return $(\sLR{H}_i, \sLR{v}_i) = (\sLR{H}_{i-1}, w_i)$
    and $\bar w_i = (0, p)$,
  \item\label{it:T5}
    otherwise, apply~\boxref{T5}
    to return
    $(\sLR{H}_i, \sLR{v}_i) = (\sLR{H}_{i-1}a_{w_i,0}, \sLR{v}_{i-1}-1)$
    and $\bar w_i = (1, -1)$,
  \end{enumerate}
\item\label{it:v-is-0}
  if $\sLR{v}_{i-1} = 0$, then
  \begin{enumerate}[ref=\ref{it:v-is-0}(\alph*)]
  \item if $\sLR{H}_{i-1} = \epsilon$, apply~\boxref{T8} and quit on error,
  \item\label{it:T3}
    otherwise, write $\sLR{H}_{i-1} = \prefH a_{\ell,m}$, then apply~\boxref{T3}
    to return $(\sLR{H}_i, \sLR{v}_i) = (\prefH, \ell)$
    and $\bar w_i = (-m-1, p-m-1)$,
  \end{enumerate}
\item\label{it:v-is-non0}
  otherwise,
  \begin{enumerate}[ref=\ref{it:v-is-non0}(\alph*)]
  \item\label{it:T7}
    if $\sLR{H}_{i-1} = \epsilon$, apply~\boxref{T7}
    to return $(\sLR{H}_i, \sLR{v}_i) = (\epsilon, \sLR{v}_{i-1}-1)$
    and $\bar w_i = (1, -1)$,
  \item\label{it:H-is-nonempty}
    otherwise, write $\sLR{H}_{i-1} = \prefH a_{\ell,m}$, then
    \begin{enumerate}[ref=\ref{it:H-is-nonempty}(\roman*)]
    \item\label{it:T4}
      if $\ell+m = p-1$, apply~\boxref{T4}
      to return $(\sLR{H}_i, \sLR{v}_i) = (\prefH, \sLR{v}_{i-1}+\ell)$
      and $\bar w_i = (-m-1, p-m-1)$,
    \item\label{it:T6}
      otherwise, apply~\boxref{T6}
      to return
      $(\sLR{H}_i, \sLR{v}_i) = (\prefH a_{\ell,m+1}, \sLR{v}_{i-1}-1)$
      and $\bar w_i = (1, -1)$.
    \end{enumerate}
  \end{enumerate}
\end{ouralgo}
\caption{\label{fig:algo-LR-elementary-step}
  One step of the LR process ($i$th step).}
\end{figure}

As in the definition of the forward map~$\phi$
in Section~\ref{sec:3-step-model},
those transitions are used in a forward manner
to produce a counter~$(\sLR{H}_i, \sLR{v}_i)$ and a letter~$\bar w_i$
from a given letter~$w_i$ and a counter~$(\sLR{H}_{i-1}, \sLR{v}_{i-1})$.
The procedure, given in Figure~\ref{fig:algo-LR-elementary-step},
explains how transitions are selected unambiguously.
We prove that, given an input $(w_i, \sLR{H}_{i-1}, \sLR{v}_{i-1})$
from $\Sigma_p \times A_p^\ast \times \bN$,
this procedure returns an output $(\sLR{H}_i, \sLR{v}_i, \bar w_i)$
from $A_p^\ast \times \bN \times S_p$,
unless it applies~\boxref{T8}.
By inspection:
\begin{itemize}
\item $\bar w_i$~is either $(0,p)$, or~$(1,-1)$,
  or, at steps \ref{it:T3} and~\ref{it:T4},
  $(-m-1,p-m-1)$ for~$0 \leq m \leq p-1-\ell$.
  In all cases, $\bar w_i$~is in~$\Sigma_p$.
\item $\sLR{v}_i$~is the sum of one or two nonnegative integers,
  except at steps \ref{it:T5}, \ref{it:T7}, and~\ref{it:T6},
  where is it~$\sLR{v}_{i-1}-1$, for positive~$\sLR{v}_{i-1}$.
  In all cases, $\sLR{v}_i$~is in~$\bN$.
\item $\sLR{H}_i$~is always a prefix of~$\sLR{H}_{i-1}$,
  possibly augmented by a letter~$b$:
  at step~\ref{it:T5}, by $b = a_{w_i,0}$ when $0 \leq w_i \leq p-1$;
  at step~\ref{it:T6}, by $b = a_{\ell,m+1}$ when $0 \leq \ell+m \leq p-2$.
  In all cases, $\sLR{H}_i$~is in~$A_p^\ast$.
\end{itemize}

As in the case~$p=1$ of Section~\ref{sec:3-step-model},
we next define a (partial) map~$\Phi_p$ from~$\Sigma_p^\ast$ to~$S_p^\ast$.
Given an input word~$w = w_1\dots w_n$ from~$\Sigma_p^\ast$,
iterating the process above
from the initial counter $(\sLR{H}_0, \sLR{v}_0) = (\epsilon, 0)$
results either in an error,
or in a word~$\bar w = \bar w_1\dots \bar w_n$.
In the former case, $\Phi_p$~remains undefined at~$w$,
while in the latter case, $\Phi_p(w)$~is defined as~$\bar w$.

\begin{figure}
\begin{ouralgo}
\Input{$(\sRL{H}_i, \sRL{v}_i, \bar w_i) \in A_p^\ast \times \bN \times S_p$, where $\bar w_i$~is the letter input in the context of counters equal to $(\sLR{H}_i, \sLR{v}_i)$}
\Output{$(w_i, \sRL{H}_{i-1}, \sRL{v}_{i-1}) \in \Sigma_p \times A_p^\ast \times \bN$, where $w_i$~is the letter output while changing the counters to the value $(\sLR{H}_{i-1}, \sLR{v}_{i-1})$}
\midalgo
\item\label{it:ww-is-p}
  if $\bar w_i = (0, p)$, then
  \begin{enumerate}[ref=\ref{it:ww-is-p}(\alph*)]
  \item\label{it:T2inv}
    if $\sRL{v}_i \leq p-1$, apply~\boxref{T2}
    to return $(\sRL{H}_{i-1}, \sRL{v}_{i-1}) = (\sRL{H}_i, 0)$
    and $w_i = \sRL{v}_i$,
  \item\label{it:T1inv}
    otherwise, apply~\boxref{T1}
    to return $(\sRL{H}_{i-1}, \sRL{v}_{i-1}) = (\sRL{H}_i, \sRL{v}_i-p)$
    and $w_i = p$,
  \end{enumerate}
\item\label{it:ww-is-nonneg}
  if $\bar w_i \neq (1, -1)$, write $\bar w_i = (\bar\mu-p, \bar\mu)$ for $\bar\mu=p-m-1$, then
  \begin{enumerate}[ref=\ref{it:ww-is-nonneg}(\alph*)]
  \item\label{it:T3inv}
    if $\sRL{v}_i =: \ell \leq \bar\mu$, apply~\boxref{T3}
    to return $(\sRL{H}_{i-1}, \sRL{v}_{i-1}) = (\sRL{H}_ia_{\ell,m}, 0)$
    and $w_i = -1$,
  \item\label{it:T4inv}
    apply~\boxref{T4}
    to return
    $(\sRL{H}_{i-1}, \sRL{v}_{i-1}) = (\sRL{H}_ia_{\bar\mu,m}, \sRL{v}_i-\bar\mu)$
    and $w_i = -1$,
  \end{enumerate}
\item if $H = \epsilon$, apply~\boxref{T7}
  to return $(\sRL{H}_{i-1}, \sRL{v}_{i-1}) = (\epsilon, \sRL{v}_i+1)$
  and $w_i = -1$,
\item\label{it:HH-is-nonempty}
  otherwise, write $\sRL{H}_i = \prefH a_{\ell,m}$, then
  \begin{enumerate}[ref=\ref{it:HH-is-nonempty}(\alph*)]
  \item\label{it:T5inv}
    if $m = 0$, apply~\boxref{T5}
    to return $(\sRL{H}_{i-1}, \sRL{v}_{i-1}) = (\prefH, \sRL{v}_i+1)$
    and $w_i = \ell$,
  \item\label{it:T6inv}
    otherwise, apply~\boxref{T6}
    to return
    $(\sRL{H}_{i-1}, \sRL{v}_{i-1}) = (\prefH a_{\ell,m-1}, \sRL{v}_i+1)$
    and $w_i = -1$.
  \end{enumerate}
\end{ouralgo}
\caption{\label{fig:algo-RL-elementary-step}
  One step of the RL process ($(n+1-i)$th step).}
\end{figure}

Likewise, as in the definition of the backward map~$\psi$
in Section~\ref{sec:3-step-model},
the transitions \boxref{T1} to~\boxref{T8} in Figure~\ref{fig:phi-p} are used in a backward manner
to produce a counter $(\sRL{H}_{i-1}, \sRL{v}_{i-1})$ and a letter~$w_i$
from a given letter~$\bar w_i$ and a counter~$(\sRL{H}_i, \sRL{v}_i)$.
We provide the procedure in Figure~\ref{fig:algo-RL-elementary-step} to explain
how transitions, meant to be applied backwards,
are selected unambiguously.
We prove that, given an input $(\sRL{H}_i, \sRL{v}_i, \bar w_i)$
from $A_p^\ast \times \bN \times S_p$,
this procedure returns an output $(w_i, \sRL{H}_{i-1}, \sRL{v}_{i-1})$
from $\Sigma_p \times A_p^\ast \times \bN$.
By inspection:
\begin{itemize}
\item $w_i$~is either $p$ or~$-1$,
  or, at steps \ref{it:T2inv} and~\ref{it:T5inv}, an integer between 0 and~$p-1$.
  In all cases, $w_i$~is in~$\Sigma_p$.
\item $\sRL{v}_{i-1}$~is either 0 or~$\sRL{v}_i+1$,
  or, at steps \ref{it:T1inv} and~\ref{it:T4inv}, of the form~$\sRL{v}_i-b$
  in situations where~$\sRL{v}_i \geq b$.
  In all cases, $\sRL{v}_{i-1}$~is in~$\bN$.
\item $\sRL{H}_{i-1}$~is always a prefix of~$\sRL{H}_i$,
  possibly augmented by a letter~$b$:
  at step~\ref{it:T3inv}, by $b = a_{\ell,m}$ when $0 \leq \ell+m \leq \bar\mu+m = p-1$;
  at step~\ref{it:T4inv}, by $b = a_{\bar\mu,m}$ when $0 \leq \bar\mu+m = p-1$;
  at step~\ref{it:T6inv}, by $b = a_{\ell,m-1}$ when $m\geq1$
  and $1 \leq \ell+m \leq p-1$.
  In all cases, $\sRL{H}_{i-1}$~is in~$A_p^\ast$.
\end{itemize}

As in the case~$p=1$ of Section~\ref{sec:3-step-model},
we next define a map~$\Psi_p$ from~$S_p^\ast$ to~$\Sigma_p^\ast$ as follows.
Given an input word~$\bar w = \bar w_1\dots \bar w_n$ from~$S_p^\ast$,
iterating the process above
from the initial counter $(\sRL{H}_n, \sRL{v}_n) = (\epsilon, 0)$
results in a word~$w = w_1\dots w_n$,
and $\Psi_p(\bar w)$~is defined as~$w$.

In Figure~\ref{fig:partition},
we provide again, this time in usual set notation, the input and output sets
of the forward transitions \boxref{T1} to~\boxref{T8}
and of the corresponding backward transitions.
The careful reader will realize
that the eight sets in the middle column form
a partition of $\Sigma_p \times A_p^\ast \times \bN$,
while the seven sets in the right-hand column form
a partition of $A_p^\ast \times \bN \times S_p$.
\begin{figure}
\begin{small}
\centerline{\begin{tabular}{c|c|c}
{} & Input: $(\mu, H_L, v_L)$ \phantom{xxxxxxxx} & Output: $(H_R, v_R, \bar\mu)$ \phantom{xxxxxxxx} \\
{} & from $\Sigma_p \times A_p^\ast \times \bN$ & from $A_p^\ast \times \bN \times S_p$ \\
\hline
\boxref{T1} & $\{p\} \times A_p^\ast \times \bN$ & $A_p^\ast \times (p+\bN) \times \{(0,p)\}$ \\
\boxref{T2} & $\{0,\dots,p-1\} \times A_p^\ast \times \{0\}$ & $A_p^\ast \times \{0,\dots,p-1\} \times \{(0,p)\}$ \\
\boxref{T3} & $\{-1\} \times (A_p^\ast\smallsetminus\{\epsilon\}) \times \{0\}$ & $A_p^\ast \times \bigcup_{j=1}^p \{0,\dots,p-j\} \times \{(-j,p-j)\}$ \\
\boxref{T4} & $\{-1\} \times \bigcup_{\ell+m=p-1}A_p^\ast a_{\ell,m} \times (1+\bN)$ & $A_p^\ast \times \bigcup_{j=1}^p (p-j+1+\bN) \times \{(-j,p-j)\}$ \\
\boxref{T5} & $\{0,\dots,p-1\} \times A_p^\ast \times (1+\bN)$ & $\bigcup_{\ell=0}^{p-1} A_p^\ast a_{\ell,0} \times \bN \times \{(1,-1)\}$ \\
\boxref{T6} & $\{-1\} \times \bigcup_{\ell+m\leq p-2} A_p^\ast a_{\ell,m} \times (1+\bN)$ & $\bigcup_{\ell=0}^{p-2}\bigcup_{m=1}^{p-1-\ell} A_p^\ast a_{\ell,m} \times \bN \times \{(1,-1)\}$ \\
\boxref{T7} & $\{-1\} \times \{\epsilon\} \times (1+\bN)$ & $\{\epsilon\} \times \bN \times \{(1,-1)\}$ \\
\boxref{T8} & $\{-1\} \times \{\epsilon\} \times \{0\}$ &
\end{tabular}}
\end{small}
\caption{\label{fig:partition}
  Partitioning the input and output sets of the forward and backward transitions.
  For each column, the sets in the 8 rows partition the set given in the header.}
\end{figure}
Thus, the forward and backward procedures described above
induce inverse bijections between
$(\Sigma_p \times A_p^\ast \times \bN) \smallsetminus \{(-1,\epsilon,0)\}$
and $A_p^\ast \times \bN \times S_p$.

This local reversibility property and the constructions of $\Phi_p$ and~$\Psi_p$
immediately make it possible to identify simple sets of words
on which the restriction of~$\Phi_p$ has a left inverse given by~$\Psi_p$,
and
on which the restriction of~$\Psi_p$ has a left inverse given by~$\Phi_p$.
This is made precise in the following proposition.

\begin{proposition}
If $w \in \Sigma_p^\ast$ is in the domain of definition of~$\Phi_p$,
that is, if \boxref{T8}~is never used in the forward construction applied to~$w$,
and if additionally this construction terminates
with~$(\sLR{H}_n, \sLR{v}_n) = (\epsilon, 0)$,
thus defining some $\bar w = \Phi_p(w) \in S_p^\ast$,
then $\Psi_p$~is well defined at~$\bar w$,
with the property $\Psi_p(\bar w) = w$,
that is, $\Psi_p(\Phi_p(w)) = w$.

Symmetrically, given $\bar w \in S_p^\ast$,
if the backward construction applied to~$\bar w$ terminates
with~$(\sRL{H}_0, \sRL{v}_0) = (\epsilon, 0)$,
thus defining some $w = \Psi_p(\bar w) \in \Sigma_p^\ast$,
then $\Phi_p$~is well defined at~$w$,
with the property $\Phi_p(w) = \bar w$,
that is, $\Phi_p(\Psi_p(\bar w)) = \bar w$.
\end{proposition}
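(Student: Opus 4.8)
The plan is to run the two constructions against one another and show, by a single induction along the index~$i$, that the sequences of counters they produce coincide step by step. The one substantial ingredient is the \emph{local reversibility property} recorded just above the statement: a single forward step and a single backward step are mutually inverse bijections between $(\Sigma_p \times A_p^\ast \times \bN) \smallsetminus \{(-1,\epsilon,0)\}$ and $A_p^\ast \times \bN \times S_p$. Note also that the excluded triple $(-1,\epsilon,0)$ is precisely the input that triggers \boxref{T8} in Figure~\ref{fig:algo-LR-elementary-step}, so that ``$w$ lies in the domain of~$\Phi_p$'' is equivalent to ``no intermediate input of the forward run equals $(-1,\epsilon,0)$''. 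Everything beyond this is index bookkeeping.

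For the first assertion, let $w = w_1\dots w_n$ be in the domain of~$\Phi_p$, with the forward run yielding counters $(\sLR{H}_0,\sLR{v}_0)=(\epsilon,0),\dots,(\sLR{H}_n,\sLR{v}_n)=(\epsilon,0)$ and letters $\bar w_1,\dots,\bar w_n$, and set $\bar w=\Phi_p(w)\in S_p^\ast$. As $\Psi_p$ is a total map on~$S_p^\ast$, it is well defined at~$\bar w$; running it yields counters $(\sRL{H}_n,\sRL{v}_n)=(\epsilon,0),\dots,(\sRL{H}_0,\sRL{v}_0)$ and letters $w_1',\dots,w_n'$. I claim that $(\sRL{H}_i,\sRL{v}_i)=(\sLR{H}_i,\sLR{v}_i)$ and $w_i'=w_i$ for all~$i$, by downward induction on~$i$. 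The base case $i=n$ is the termination hypothesis $(\sLR{H}_n,\sLR{v}_n)=(\epsilon,0)=(\sRL{H}_n,\sRL{v}_n)$. For the inductive step, assume $(\sRL{H}_i,\sRL{v}_i)=(\sLR{H}_i,\sLR{v}_i)$. The $i$-th forward step maps $(w_i,\sLR{H}_{i-1},\sLR{v}_{i-1})$, a triple distinct from $(-1,\epsilon,0)$ since \boxref{T8} is never used, to $(\sLR{H}_i,\sLR{v}_i,\bar w_i)$; by local reversibility, the backward step sends $(\sLR{H}_i,\sLR{v}_i,\bar w_i)=(\sRL{H}_i,\sRL{v}_i,\bar w_i)$ back to $(w_i,\sLR{H}_{i-1},\sLR{v}_{i-1})$, and this output is by definition $(w_i',\sRL{H}_{i-1},\sRL{v}_{i-1})$. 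Hence $w_i'=w_i$ and $(\sRL{H}_{i-1},\sRL{v}_{i-1})=(\sLR{H}_{i-1},\sLR{v}_{i-1})$, which closes the induction; reading off $\Psi_p(\bar w)=w_1'\dots w_n'=w$ finishes this half.

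The symmetric assertion is proved identically, with the induction now run upward. Given $\bar w\in S_p^\ast$ whose backward run ends with $(\sRL{H}_0,\sRL{v}_0)=(\epsilon,0)$, set $w=\Psi_p(\bar w)=w_1\dots w_n$ and run~$\Phi_p$ on~$w$, obtaining $(\sLR{H}_0,\sLR{v}_0)=(\epsilon,0),\dots$ and output letters $\bar w_1',\dots,\bar w_n'$. One shows by upward induction on~$i$ that $(\sLR{H}_i,\sLR{v}_i)=(\sRL{H}_i,\sRL{v}_i)$, that the $i$-th forward step incurs no error, and that $\bar w_i'=\bar w_i$: the base $i=0$ is the hypothesis, and in the inductive step the output $(w_i,\sRL{H}_{i-1},\sRL{v}_{i-1})$ of the $i$-th backward step lies in $(\Sigma_p \times A_p^\ast \times \bN)\smallsetminus\{(-1,\epsilon,0)\}$, so the $i$-th forward step, applied to this very triple $(w_i,\sLR{H}_{i-1},\sLR{v}_{i-1})$, does not use \boxref{T8} and, again by local reversibility, returns $(\sRL{H}_i,\sRL{v}_i,\bar w_i)$. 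Consequently $\Phi_p$ is defined at~$w$ with $\Phi_p(w)=\bar w_1'\dots\bar w_n'=\bar w$, and, as a by-product, $(\sLR{H}_n,\sLR{v}_n)=(\epsilon,0)$.

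I expect no genuine obstacle here: the entire difficulty was discharged in the paragraph preceding the statement, namely checking that the eight input sets partition $\Sigma_p \times A_p^\ast \times \bN$ and the seven output sets partition $A_p^\ast \times \bN \times S_p$ (Figure~\ref{fig:partition}), which is what yields local reversibility. The only delicate points are the identification of the \boxref{T8} case with the excluded triple $(-1,\epsilon,0)$ and keeping the two time conventions aligned — the forward run emits $\bar w_i$ when passing from counter index~$i-1$ to~$i$, whereas the backward run emits $w_i$ when passing from~$i$ to~$i-1$ — which is exactly what makes the two inductions mirror each other.
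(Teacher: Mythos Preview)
Your proof is correct and is precisely the argument the paper intends: the paper does not give a detailed proof but states that the proposition follows immediately from the local reversibility property and the constructions of $\Phi_p$ and~$\Psi_p$, and you have spelled out exactly that induction on~$i$ with the correct base cases and the correct use of the single-step bijection to propagate the equality of counters and letters.
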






\begin{figure}

\noindent
\rule{5mm}{0mm}
\resizebox{!}{.1\textheight}{%
\begin{mypic}
\foreach \x in {0,1,2,3,4,5,6,7,8,9,10,11,12,13,14,15,16,17,18,19,20,21,22}
  \foreach \y in {0,1,2,3,4,5,6,7,8,9,10,11,12,13}
    \fill(\x,\y) circle[radius=2pt];
\draw[->](0,0)--(23,0);
\draw[->](0,0)--(0,14);
\draw[thick,->](0,0)--(1,5);
\draw[thick,->](1,5)--(2,4);
\draw[thick,->](2,4)--(3,3);
\draw[thick,->](3,3)--(4,2);
\draw[thick,->](4,2)--(5,4);
\draw[thick,->](5,4)--(6,3);
\draw[thick,->](6,3)--(7,2);
\draw[thick,->](7,2)--(8,3);
\draw[thick,->](8,3)--(9,2);
\draw[thick,->](9,2)--(10,1);
\draw[thick,->](10,1)--(11,1);
\draw[thick,->](11,1)--(12,0);
\draw[thick,->](12,0)--(13,4);
\draw[thick,->](13,4)--(14,3);
\draw[thick,->](14,3)--(15,2);
\draw[thick,->](15,2)--(16,1);
\draw[thick,->](16,1)--(17,4);
\draw[thick,->](17,4)--(18,3);
\draw[thick,->](18,3)--(19,2);
\draw[thick,->](19,2)--(20,1);
\draw[thick,->](20,1)--(21,0);
\foreach \x in {0,5,10,15,20} \draw(\x,-.7) node{\tiny\x};
\foreach \y in {0,5,10} \draw(-.7,\y) node{\tiny\y};
\draw(11.5,-2) node{$w$};
\end{mypic}
}
\hfill
\resizebox{!}{.1\textheight}{%
\begin{mypic}
\foreach \x in {0,1,2,3,4,5,6,7,8,9,10}
  \foreach \y in {0,1,2,3,4,5,6,7,8,9,10,11,12,13}
    \fill(\x,\y) circle[radius=2pt];
\draw[->](0,0)--(11,0);
\draw[->](0,0)--(0,14);
\draw[thick,->](0,0)--(0,5);
\draw[thick,->](0,5)--(1,4);
\draw[thick,->](1,4)--(2,3);
\draw[thick,->](2,3)--(3,2);
\draw[thick,->](3,2)--(4,1);
\draw[thick,->](4,1)--(5,0);
\draw[thick,->](5,0)--(3,3);
\draw[thick,->](3,3)--(4,2);
\draw[thick,->](4,2)--(5,1);
\draw[thick,->](5,1)--(3,4);
\draw[thick,->](3,4)--(4,3);
\draw[thick,->](4,3)--(3,7);
\draw[thick,->](3,7)--(3,12);
\draw[thick,->](3,12)--(4,11);
\draw[thick,->](4,11)--(5,10);
\draw[thick,->](5,10)--(6,9);
\draw[thick,->](6,9)--(7,8);
\draw[thick,->](7,8)--(6,12);
\draw[thick,->](6,12)--(7,11);
\draw[thick,->](7,11)--(8,10);
\draw[thick,->](8,10)--(9,9);
\foreach \x in {0,5,10} \draw(\x,-.7) node{\tiny\x};
\foreach \y in {0,5,10} \draw(-.7,\y) node{\tiny\y};
\draw(5.5,-2) node{$\bar w$};
\end{mypic}
}
\rule{5mm}{0mm}

\caption{An example of a Łukasiewicz walk~$w$ (left)
  and of the corresponding $p$-tandem word~$\bar w$ (right),
  for $p=5$ and length~21.}
\label{fig:bij-ex-p=5}

\end{figure}

\begin{figure}
\noindent
\begin{tikzpicture}[scale = .55]
\foreach \x / \t in {-/$w$, 1/5, 2/D, 3/D, 4/D, 5/2, 6/D, 7/D, 8/1, 9/D, 10/D, 11/0, 12/D, 13/4, 14/D, 15/D, 16/D, 17/3, 18/D, 19/D, 20/D, 21/D}
  \draw (\x.5,5) node {\t};
\foreach \x / \t in {-/tr, 1/T1, 2/T7, 3/T7, 4/T7, 5/T5, 6/T6, 7/T3, 8/T5, 9/T6, 10/T3, 11/T5, 12/T3, 13/T2, 14/T7, 15/T7, 16/T7, 17/T5, 18/T3, 19/T7, 20/T7, 21/T7}
  \draw (\x.5,4) node {\small\t};
\foreach \x / \t in {-.5/$H$,  1/$\epsilon$, 2/$\epsilon$, 3/$\epsilon$, 4/$\epsilon$, 5/$\epsilon$, 6/$a_{2,0}$, 7/$a_{2,1}$, 8/$\epsilon$, 9/$a_{1,0}$, 10/$a_{1,1}$, 11/$\epsilon$, 12/$a_{0,0}$, 13/$\epsilon$, 14/$\epsilon$, 15/$\epsilon$, 16/$\epsilon$, 17/$\epsilon$, 18/$a_{3,0}$, 19/$\epsilon$, 20/$\epsilon$, 21/$\epsilon$, 22/$\epsilon$}
  \draw (\x,3) node {\small\t};
\foreach \x / \t in {-.5/$v$, 1/0, 2/5, 3/4, 4/3, 5/2, 6/1, 7/0, 8/2, 9/1, 10/0, 11/1, 12/0, 13/0, 14/4, 15/3, 16/2, 17/1, 18/0, 19/3, 20/2, 21/1, 22/0}
  \draw (\x,2) node {\small\t};
\foreach \x / \t in {-/$\bar w$, 1/5, 2/D, 3/D, 4/D, 5/D, 6/D, 7/3, 8/D, 9/D, 10/3, 11/D, 12/4, 13/5, 14/D, 15/D, 16/D, 17/D, 18/4, 19/D, 20/D, 21/D}
  \draw (\x.5,1) node {\t};
\foreach \x / \t in {-.5/$i$, 1/0, 2/1, 3/2, 4/3, 5/4, 6/5, 7/6, 8/7, 9/8, 10/9, 11/10, 12/11, 13/12, 14/13, 15/14, 16/15, 17/16, 18/17, 19/18, 20/19, 21/20, 22/21}
  \draw (\x,0) node {\tiny\t};
\foreach \x / \t in {                                                                                      2/2}
  \draw (\x,-1) node {\scriptsize\t};
\foreach \x / \t in {                                          14/14,                                 3/3, 2/3}
  \draw (\x,-1.5) node {\scriptsize\t};
\foreach \x / \t in {              19/19,               15/15, 14/15,                            4/4, 3/4, 2/4}
  \draw (\x,-2) node {\scriptsize\t};
\foreach \x / \t in {       20/20, 19/20,        16/16, 15/16, 14/16,             8/8,      5/5, 4/5, 3/5, 2/5}
  \draw (\x,-2.5) node {\scriptsize\t};
\foreach \x / \t in {21/21, 20/21, 19/21, 17/17, 16/17, 15/17, 14/17, 11/11, 9/9, 8/9, 6/6, 5/6, 4/6, 3/6, 2/6, -.5/$V$}
  \draw (\x,-3) node {\scriptsize\t};
\foreach \x in {1, 7, 10, 12, 13, 18, 22}
  \draw (\x,-3) node {\scriptsize$\epsilon$};
\foreach \x in {.5} \draw(\x,-3.5)--(\x,5.5);
\foreach \y in {4.5, 1.5, .5} \draw(-1,\y)--(22.5,\y);
\end{tikzpicture}
\caption{\label{fig:transduction-ex-p=5}
  Transduction corresponding to Figure~\ref{fig:bij-ex-p=5} (top rows),
  augmented with the stack to be used in Section~\ref{sec:raising-p-gt-1} (bottom row).
  The letters~$\mu$ in row~$w$ are the input to the LR process as well as the output from the RL process.
  They give the Łukasiewicz walk~$w$ as the succession of steps~$(1,\mu)$, with D~standing for~$-1$.
  The letters~$\bar\mu$ in row~$\bar w$ are the output from the LR process, the input to the RL process.
  They give the $p$-tandem word~$\bar w$ as the succession of steps~$(\bar\mu-5,\bar\mu)$, or $(1,-1)$~if $\bar\mu={}$D.
  The transitions used are indicated by row~tr.
  How they transform the generalized counters $H$ and~$v$ is provided by two more rows.
  Along this run, $H$~remains either the empty word~$\epsilon$ or is a word consisting of a single letter~$a_{\ell,m}$.
  The last row provides the evolution of the stack~$V$ of the augmented RL transducer process of Section~\ref{sec:raising-p-gt-1}, with pushes to the top of the depicted columns ($\epsilon$~denoting the empty stack).
  Observe that the length of~$V$ constantly matches~$v$.}
\end{figure}

\begin{example}\label{ex:bij-ex-p=5}
Figure~\ref{fig:bij-ex-p=5} provides an example for~$p=5$:
on the left side, the 5-Łukasiewicz walk
\[ w = (5,-1,-1,-1,2,-1,-1,1,-1,-1,0,-1,4,-1,-1,-1,3,-1,-1,-1,-1) , \]
where each letter~$\mu$ represents the step~$(1,\mu)$;
on the right side, the corresponding 5-tandem walk, obtained by applying~$\Phi_5$,
\[ \bar w = (5,-1,-1,-1,-1,-1,3,-1,-1,3,-1,4,5,-1,-1,-1,-1,4,-1,-1,-1) , \]
where each letter~$\bar\mu$ represents the step~$(\bar\mu-5,\bar\mu)$
if $\bar\mu\geq0$,
and $(1,-1)$ if~$\bar\mu=-1$.
Figure~\ref{fig:transduction-ex-p=5} provides
the transitions used and the evolution of the counters $H$ and~$v$
along the construction of the 5-tandem walk from the 5-Łukasiewicz walk.
\end{example}

\subsection{Parameters depending on the constructions, and their variations}
\label{sec:parameters}

The counters $(H, v)$ of Section~\ref{sec:cons-maps}
(whether it be in LR or in RL form)
are needed explicitly
for the constructions of $\Phi_p$ and~$\Psi_p$.
In Section~\ref{sec:invariants}, we will introduce further parameters
that could be computed, along with $H$ and~$v$, over the course of the constructions.
Yet, these parameters are only needed for proving properties of the constructions, which is why they are not made part of the definition of the construction in the way that $H$ and~$v$ are.
A notational difficulty in proofs is that we need to compare
the counters, outputs, and parameters as computed by the left-to-right process~$\Phi_p$
with those computed by the right-to-left process~$\Psi_p$,
and in fact several sorts of variations in the parameters.
We introduce such variations in abstract in the present section,
before defining specific parameters in Section~\ref{sec:invariants}.
(We will freely use the abbreviations LR and RL to refer to the two processes.)

We start with notation to distinguish the counters $(H, v)$ and the words
produced by the two processes.
Given a $p$-Łukasiewicz word $w \in \Sigma_p^n$,
for any integer~$i$ satisfying $1 \leq i \leq n$
we denote by $\LR{H}{w}{i}$ and~$\LR{v}{w}{i}$
the values of $H$ and~$v$ obtained by the LR process after its $i$th step,
that is, after using the letter~$w_i$ from the input word~$w$,
and by~$\LR{\bar w}{w}{i}$ the letter output at this step.
Similarly, given a quarter-plane word $\bar w \in S_p^n$,
for any integer~$i$ satisfying $0 \leq i \leq n-1$
we denote by $\RL{H}{\bar w}{i}$ and~$\RL{v}{\bar w}{i}$
the values of $H$ and~$v$ obtained by the RL process after its $(n-i)$th step,
that is, after using the letter~$\bar w_{i+1}$ from the input word~$\bar w$,
and by~$\RL{w}{\bar w}{i+1}$ the letter output at this step.
To get total maps over~$0 \leq i \leq n$, we also set
$\LR{H}{w}{0} = \RL{H}{\bar w}{n} = \epsilon$
and~$\LR{v}{w}{0} = \RL{v}{\bar w}{n} = 0$.
(Note that
$(\LR{H}{w}{i}, \LR{v}{w}{i})$ and $(\RL{H}{\bar w}{i}, \RL{v}{\bar w}{i})$
were respectively denoted $(\sLR{H}_i, \sLR{v}_i)$ and $(\sRL{H}_i, \sRL{v}_i)$
in Section~\ref{sec:cons-maps}.)

The parameters we are interested in depend
on the words input to the constructing processes,
in~$\Sigma_p^\ast$~for~LR and in~$S_p^\ast$~for~RL,
on the words output by the same processes,
in~$S_p^\ast$~for~LR and in~$\Sigma_p^\ast$~for~RL,
as well as on the counter $(H, v)$ at each step in the process.
So, we formally introduce a \emph{parameter}~$q$
as a tuple~$(\alpha, \bar\alpha, \sigma_1, \sigma_2)$
of monoid morphisms with integer values.
That is, we assume that the four maps satisfy formulas
\begin{equation*}
\alpha(w) = \sum_{j=1}^i \alpha(w_j) , \quad
\bar\alpha(\bar w) = \sum_{j=1}^i \bar\alpha(\bar w_j) , \quad
\sigma_1(H) = \sum_{j=1}^i \sigma_1(\eta_i) , \quad
\sigma_2(v) = v \sigma_2(1) .
\end{equation*}
for all $i \in \bN$,
$w = w_1\dots w_i \in \Sigma_p^i$,
$\bar w = \bar w_1\dots \bar w_i \in S_p^i$,
$H = \eta_1\dots\eta_i \in A_p^i$,
and $v \in \bN$.
After fixing a length~$n$ for words,
we associate two sequences to the parameter~$q$,
which we name the \emph{values of~$q$ along the LR and RL processes}, respectively:
\begin{itemize}
\item For a $p$-Łukasiewicz word $w \in \Sigma_p^n$ and $0 \leq i \leq n$,
the value $\LR{q}{w}{i}$ of~$q$ obtained
after $i$~steps in the construction~$\Phi_p$
is given, assuming no error occurs, as
\begin{equation*}
\LR{q}{w}{i} =
  \alpha(w_1\dots w_i) + \bar\alpha(\LR{\bar w}{w}{1}\dots\LR{\bar w}{w}{i})
  + \sigma_1(\LR{H}{w}{i}) + \sigma_2(\LR{v}{w}{i}) .
\end{equation*}
\item Similarly, for a quarter-plane word $\bar w \in S_p^n$ and $0 \leq i \leq n$,
the value $\RL{q}{\bar w}{i}$ of~$q$ obtained
after $n-i$~steps in the construction~$\Psi_p$
is given as
\begin{equation*}
\RL{q}{\bar w}{i} =
  \alpha(\RL{w}{\bar w}{i+1}\dots\RL{w}{\bar w}{n}) + \bar\alpha(\bar w_{i+1}\dots\bar w_n)
  - \sigma_1(\RL{H}{\bar w}{i}) - \sigma_2(\RL{v}{\bar w}{i}) .
\end{equation*}
(Negative signs may look counter-intuitive.
They relate to RL's operating transitions backwards.)
\end{itemize}
Observe that $\LR{q}{w}{i}$~may be undefined,
as $\Phi_p$~may lead to error.
It will be our goal to relate
$\LR{q}{w}{i}$ and~$\RL{q}{\bar w}{i}$
when $w$ and~$\bar w$ are properly related by $\Phi_p$ and~$\Psi_p$
(and for suitable parameters~$q$).

Our plan is next to describe how parameters vary along the processes LR and RL,
and how (LR and RL) variations in parameters decompose into
variations across transitions.
We first define two kinds of variation operators on parameters:
\begin{itemize}
\item For a $p$-Łukasiewicz word $w \in \Sigma_p^n$ and $0 \leq i \leq j \leq n$,
  the LR-variation in~$q$ is defined as
  \[ \DLR{w}{i}{j}q = \LR{q}{w}{j} - \LR{q}{w}{i} , \]
  which is equal to
  \begin{equation*}
    \alpha(w_{i+1}\dots w_j) + \bar\alpha(\LR{\bar w}{w}{i+1}\dots\LR{\bar w}{w}{j})
    + \sigma_1(\LR{H}{w}{j}) - \sigma_1(\LR{H}{w}{i}) + \sigma_2(\LR{v}{w}{j} - \LR{v}{w}{i}) .
  \end{equation*}
\item Similarly, for a quarter-plane word $\bar w \in S_p^n$ and $0 \leq i \leq j \leq n$,
  the RL-variation in~$q$ is defined as
  \[ \DRL{\bar w}{i}{j}q = \RL{q}{\bar w}{i} - \RL{q}{\bar w}{j} , \]
  which is equal to
  \begin{equation*}
    \alpha(\RL{w}{\bar w}{i+1}\dots\RL{w}{\bar w}{j}) + \bar\alpha(\bar w_{i+1}\dots \bar w_j)
    + \sigma_1(\RL{H}{\bar w}{j}) - \sigma_1(\RL{H}{\bar w}{i}) + \sigma_2(\RL{v}{\bar w}{j} - \RL{v}{\bar w}{i}) .
  \end{equation*}
\end{itemize}

An extreme case is obtained when $i$ and~$j$ differ by~1, in which case
\begin{equation*}
  \DLR{w}{i-1}{i}q =
    \alpha(w_i) + \bar\alpha(\LR{\bar w}{w}{i})
    + \sigma_1(\LR{H}{w}{i}) - \sigma_1(\LR{H}{w}{i-1}) + \sigma_2(\LR{v}{w}{i} - \LR{v}{w}{i-1})
\end{equation*}
and
\begin{equation*}
  \DRL{\bar w}{i-1}{i}q =
    \alpha(\RL{w}{\bar w}{i}) + \bar\alpha(\bar w_i)
    + \sigma_1(\RL{H}{\bar w}{i}) - \sigma_1(\RL{H}{\bar w}{i-1}) + \sigma_2(\RL{v}{\bar w}{i} - \RL{v}{\bar w}{i-1}) .
\end{equation*}
Given two (independent) words $w$ and~$\bar w$ of same length~$n$,
it can happen that
the $i$th step of the LR process at~$w$
and the $(n+1-i)$th step of the RL process at~$\bar w$
use the same transition (in reverse forms).
That is, a possible situation is the existence of
a tuple $\tau = (\mu, H_L, v_L, H_R, v_R, \bar a)$ satisfying
\begin{alignat*}{3}
w_i = \RL{w}{\bar w}{i} &= \mu , &\quad
\LR{H}{w}{i-1} = \RL{H}{\bar w}{i-1} &= H_L , &\quad
\LR{v}{w}{i-1} = \RL{v}{\bar w}{i-1} &= v_L , \\
\LR{H}{w}{i} = \RL{H}{\bar w}{i} &= H_R , &\quad
\LR{v}{w}{i} = \RL{v}{\bar w}{i} &= v_R , &\quad
\LR{\bar w}{w}{i} = \bar w_i &= \bar a ,
\end{alignat*}
in which case $\DLR{w}{i-1}{i}q$ and~$\DRL{\bar w}{i-1}{i}q$ are equal.
In such a situation, we call~$\tau$ a \emph{transition instance}
(or more specifically an instance of a named transition).
We can then define the \emph{variation in~$q$ across~$\tau$}
as the common value
\begin{equation*}
\Delta_\tau q =
  \alpha(\mu) + \bar\alpha(\bar a) + \sigma_1(H_R) - \sigma_1(H_L) + \sigma_2(v_R - v_L) .
\end{equation*}
Obviously, $\Delta_\tau q$~depends on the 6-tuple~$\tau$,
but neither on the location~$i$ of its use,
nor on the context of the entire
 $p$-Łukasiewicz walk or $p$-tandem quarter-plane walk being processed.
Furthermore, both $\DLR{w}{i}{j}q$ and~$\DRL{\bar w}{i}{j}q$ decompose
as sums of~$\Delta_\tau q$'s, for the successive~$\tau$'s set up by LR or~RL.

\subsection{Specific parameters and some of their invariants}
\label{sec:invariants}

The specific parameters we are about to define are of two types:
first, parameters on counters;
second, parameters on input and output words.
Our proofs will require parameters that are linear combinations of the two types.

For the parameters on counters,
we introduce three monoid morphisms from~$A_p^\ast$ to~$\bN$.
Namely, we define $|\cdot|$, $\wt_\ell$, and~$\wt_m$,
by, respectively,
$|a_{\ell,m}| = 1$, $\wt_\ell(a_{\ell,m}) = \ell+1$,
and $\wt_m(a_{\ell,m}) = m+1$
for each~$a_{\ell,m} \in A_p$.
We obtain three parameters, named respectively $\lambda$, $\zeta$, and~$\xi$,
by setting $\alpha$, $\bar\alpha$, and~$\sigma_2$ to the zero map,
and defining~$\sigma_1$ by, respectively,
\begin{equation*}
\sigma_1(H) =  |H| , \qquad
\sigma_1(H) =  \wt_\ell(H) , \qquad
\sigma_1(H) =  \wt_m(H) .
\end{equation*}
A fourth parameter depending on the counters, named~$v$, is obtained
by setting~$\sigma_2$ to the identity
and $\alpha$, $\bar\alpha$, and~$\sigma_1$ to the zero map.
We state the following proposition with no reference to parameters,
but it clearly expresses constraints
for the first three parameters above to cancel.
\begin{proposition}\label{prop ways to be empty}
For any $H \in A_p^\ast$, the following four properties are equivalent:
$|H| = 0$; $\wt_\ell(H) = 0$; $\wt_m(H) = 0$; $H = \epsilon$.
\end{proposition}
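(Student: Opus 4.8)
The plan is to exploit the fact that each generator of the free monoid $A_p^\ast$ is sent to a \emph{strictly positive} integer by each of the three morphisms $|\cdot|$, $\wt_\ell$, and $\wt_m$. Concretely, for a letter $a_{\ell,m} \in A_p$ one has $\ell \geq 0$ and $m \geq 0$, so $|a_{\ell,m}| = 1 \geq 1$, $\wt_\ell(a_{\ell,m}) = \ell + 1 \geq 1$, and $\wt_m(a_{\ell,m}) = m + 1 \geq 1$. The shift by one in the definitions of $\wt_\ell$ and $\wt_m$ is exactly what guarantees this: even the letter $a_{0,0}$ gets $\wt_\ell = \wt_m = 1$, so these morphisms genuinely detect emptiness rather than merely the total $\ell$- or $m$-content of a word.

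Next I would write an arbitrary word as $H = \eta_1 \cdots \eta_k$ with each $\eta_i \in A_p$, where $k$ is the number of letters of $H$. Since $|\cdot|$, $\wt_\ell$, and $\wt_m$ are monoid morphisms into $(\bN, +)$, each of $|H|$, $\wt_\ell(H)$, $\wt_m(H)$ is a sum of $k$ terms, every one of which is $\geq 1$ by the previous observation; hence each of these three nonnegative integers is $\geq k$, and is therefore $0$ if and only if $k = 0$. Finally $k = 0$ holds if and only if $H$ is the empty word $\epsilon$. Chaining these equivalences gives $|H| = 0 \iff k = 0 \iff H = \epsilon$, and identically $\wt_\ell(H) = 0 \iff H = \epsilon$ and $\wt_m(H) = 0 \iff H = \epsilon$, which is precisely the claimed four-way equivalence.

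There is essentially no obstacle: the only point worth stating carefully is the positivity of the generator values, after which the result is just the unfolding of a morphism on a word together with the remark that a sum of strictly positive integers vanishes only when it is empty. No induction beyond that trivial unfolding, and no case analysis on $p$, is required.
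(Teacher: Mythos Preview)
Your proof is correct; the paper itself states this proposition without proof, treating it as evident from the definitions, and your argument spells out precisely the intended reasoning---namely that each generator $a_{\ell,m}$ is sent to a strictly positive integer by all three morphisms, so any of the three values vanishes exactly when the word is empty.
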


The simplest parameters depending on input and output words we use
are given as coordinates of the vertices of the walks.
A first parameter,~$z$, is obtained by setting $\bar\alpha = \sigma_1 = \sigma_2 = 0$
and by defining $\alpha(\mu) = \mu$ for any letter~$\mu \in \Sigma_p$.
Two parameters, $\bar x$ and~$\bar y$, respectively, are obtained
by setting $\alpha = \sigma_1 = \sigma_2 = 0$ and by defining $\bar\alpha(\bar a)$
to be the first coordinate~$\bar a_1$,
respectively the second coordinate~$\bar a_2$
of any letter~$\bar a = (\bar a_1, \bar a_2) \in S_p$.
In particular, for any $p$-Łukasiewicz word~$w \in S_p^n$
we have the formulas
\begin{equation*}
(\LR{\bar x}{w}{i}, \LR{\bar y}{w}{i}) = \sum_{1 \leq j \leq i} \LR{\bar w}{w}{j} ,
\qquad
\LR{z}{w}{i} = \sum_{1 \leq j \leq i} w_j ,
\end{equation*}
so that the~$(i, \LR{z}{w}{i})$ form a walk
that remains in the upper half plane,
and the $(\LR{\bar x}{w}{i}, \LR{\bar y}{w}{i})$ form
a (free) $p$-tandem walk associated to
$\LR{\bar w}{w}{1}\dots\LR{\bar w}{w}{n} \in \Sigma_p^n$.
Another parameter, named~$\bar k$, measures
how far the $p$-tandem walk is from the main diagonal,
and is obtained by setting $\alpha = \sigma_1 = \sigma_2 = 0$
and by defining $\bar\alpha(\bar a)$
to be the difference~$\bar a_2 - \bar a_1$,
for any letter~$\bar a = (\bar a_1, \bar a_2) \in S_p$.
Specifically,
\begin{equation*}
\LR{\bar k}{w}{i} = \LR{\bar y}{w}{i} - \LR{\bar x}{w}{i}
  = \sum_{j=1}^n \left(\LR{\bar w}{w}{j}\right)_2 - \left(\LR{\bar w}{w}{j}\right)_1 .
\end{equation*}
Note that, across one step in the LR process,
the parameter~$\bar k$
either increases by~$p$,
if the letter~$\LR{\bar w}{w}{j}$ is one of the $(-j,p-j)$ for $0\leq j \leq p$,
or decreases by~2, if it is $(1,-1)$.
We mimic this definition on the walk associated to~$w$,
by setting now $\bar\alpha = \sigma_1 = \sigma_2 = 0$
and by defining the monoid morphism~$\alpha$ from~$\Sigma_p^\ast$ to~$\bN$
defined by $\alpha(\mu) = p$ if $0\leq \mu \leq p$ and $\alpha(-1) = -2$.
This defines a parameter, named~$k$.
Two additional parameters,
which we will need specifically for the analysis of the RL process,
keep track of the cumulative amount
by which the inequalities given below in~\eqref{eq:Delta-xy} are strict.
They can be understood visually by noting that under most transitions,
$\bar y$~and~$v$ move in parallel, as do $\bar x$ and~$\xi$.
The parameters we want, named $\bar r$ and~$\bar s$ below,
are the shifts separating the two parallel functions in each case.
More precisely,
define a parameter~$\bar r$ by setting
  \[ \alpha(\mu) = 0, \quad \bar\alpha(\bar a) = \bar a_1, \quad \sigma_1(H) = -\wt_m(H), \quad \sigma_2(v) = 0. \]
Similarly,
define a parameter~$\bar s$ by setting
  \[ \alpha(\mu) = 0, \quad \bar\alpha(\bar a) = \bar a_2, \quad \sigma_1(H) = 0, \quad \sigma_2(v) = -v. \]
Thus, the following formulas hold for $\bar w \in S_p^\ast$:
\begin{equation}\label{eq:r-bar-s-bar}
\RL{\bar r}{\bar w}{i} = \RL{\bar x}{\bar w}{i} - \RL{\xi}{\bar w}{i} ,
\qquad
\RL{\bar s}{\bar w}{i} = \RL{\bar y}{\bar w}{i} - \RL{v}{\bar w}{i} .
\end{equation}

It will prove convenient in what follows
to write identities like the last one
in a more compact, while more general, form.
To this end, remember that
our parameters are defined through morphisms to~$\bZ$.
Thus, the operations in~$\bZ$
of addition, subtract, and multiplication by an integer,
transpose in an obvious manner
into operations on morphisms, then into operations on parameters.
For instance, we could have defined $\bar r$ and~$\bar s$
as differences of parameters,
via the formulas $\bar r = \bar x - \xi$ and $\bar s = \bar y - v$.
These conveniently suggest the identities~\eqref{eq:r-bar-s-bar},
as well as similar identities for the RL process and for variations.

\bigskip

The variations along the walks in the parameters introduced so far depend
only on the transition instance
$\tau = (\mu, H_L, v_L, H_R, v_R, \bar\mu)$,
so we can define and calculate~$\Delta_\tau$ for each of them.
As it turns out, the variations actually do not depend on the specific
transition instance~$\tau$,
but simply on the transition, one of \boxref{T1} to~\boxref{T8}.
The results are given in the table in Figure~\ref{fig:variations-gen-p}.

\begin{figure}

\centerline{\small
\medskip
\begin{tabular}{c|ccccccc}
$\tau$ inst.\ of          & \boxref{T1} & \boxref{T2} & \boxref{T3} & \boxref{T4} & \boxref{T5} & \boxref{T6} & \boxref{T7} \\
\hline
$\Delta_\tau \bar x$           & 0    & 0           & $-m-1$          & $-m-1$       & 1           & 1               & 1     \\
$\Delta_\tau \bar y$           & $p$  & $p$         & $p-m-1$         & $p-m-1$      & $-1$        & $-1$            & $-1$  \\
$\Delta_\tau z$           & $p$  & $q$         & $-1$            & $-1$         & $q$         & $-1$            & $-1$  \\
$\Delta_\tau v$           & $p$  & $q$         & $\ell$          & $\ell$       & $-1$        & $-1$            & $-1$  \\
$\Delta_\tau \lambda$         & 0    & 0           & $-1$            & $-1$         & 1           & 0               & 0     \\
$\Delta_\tau \zeta$ & 0    & 0           & $-\ell-1$       & $-\ell-1$    & $q+1$       & 0               & 0     \\
$\Delta_\tau \xi$    & 0    & 0           & $-m-1$          & $-m-1$       & 1           & 1               & 0     \\
$\Delta_\tau k$           & $p$  & $p$         & $-2$            & $-2$         & $p$         & $-2$            & $-2$  \\
$\Delta_\tau \bar k$          & $p$  & $p$         & $p$             & $p$          & $-2$        & $-2$            & $-2$  \\
$\Delta_\tau \bar r$           & 0    & 0           & 0               & 0            & 0           & 0               & 1     \\
$\Delta_\tau \bar s$           & 0    & $p-q$       & $p-m-1-\ell$    & 0            & 0           & 0               & 0     \\
\hline
\multirow{2}{*}{hypothesis}
                     &      & \myclap{$p-1\geq q$} & & \myclap{$p-m-1=\ell$} & & \myclap{$p-m-2\geq\ell$} & \\
                     &      & & \myclap{$p-m-1\geq\ell$} & & \myclap{$p-1\geq q$} & &
\end{tabular}}

\caption{\label{fig:variations-gen-p}
  Variations in the parameters across the transitions
  defined in Figure~\ref{fig:phi-p}
  for the bijections between $p$-Łukasiewicz and $p$-tandem walks.}
\end{figure}

\begin{proposition}\label{prop inequalities}
For each~$\tau$ and $\Delta = \Delta_\tau$, the following inequalities and equalities hold:
\begin{gather}
\label{eq:Delta-xy}
\Delta \bar x \geq \Delta \xi , \quad
\Delta \bar y \geq \Delta v , \\
\label{eq:Delta-zkbark}
\Delta z = \Delta \zeta + \Delta v , \quad
\Delta k - \Delta\bar k = (p+2) \Delta \lambda , \\
\label{eq:Delta-barxbary}
\Delta \bar x = \Delta \bar r + \Delta \xi , \quad
\Delta \bar y = \Delta \bar s + \Delta v .
\end{gather}

Additionally, the inequalities~\eqref{eq:Delta-xy} are strict in precisely the following circumstances:
\begin{itemize}
\item
  $\Delta_\tau \bar x > \Delta_\tau \xi$ only if $\tau$~is an instance of~\boxref{T7},
  implying $H_L = H_R = \epsilon$;
\item
  $\Delta_\tau \bar y > \Delta_\tau v$ only if $\tau$~is an instance of \boxref{T2} or~\boxref{T3},
  implying $v_L = 0$.
\end{itemize}
\end{proposition}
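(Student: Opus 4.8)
The plan is to reduce the whole statement to a bounded verification against the table in Figure~\ref{fig:variations-gen-p}, using two structural facts already available. First, $\Delta_\tau$ acts $\bZ$-linearly on parameters: a parameter is a tuple $(\alpha,\bar\alpha,\sigma_1,\sigma_2)$ of integer-valued morphisms, and $\Delta_\tau q=\alpha(\mu)+\bar\alpha(\bar a)+\sigma_1(H_R)-\sigma_1(H_L)+\sigma_2(v_R-v_L)$ is linear in that tuple. Second, the error box~\boxref{T8} produces no output, hence underlies no transition instance~$\tau$, so only \boxref{T1}--\boxref{T7} need to be treated. With this in hand, the two equalities in~\eqref{eq:Delta-barxbary} require no case analysis at all: they are the images under $\Delta_\tau$ of the identities $\bar x=\bar r+\xi$ and $\bar y=\bar s+v$, which hold by the very definitions of $\bar r$ and~$\bar s$ as the parameters $\bar x-\xi$ and $\bar y-v$.

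For the inequalities~\eqref{eq:Delta-xy} I would combine this with the $\Delta_\tau\bar r$ and $\Delta_\tau\bar s$ rows of the table. Indeed~\eqref{eq:Delta-barxbary} gives $\Delta_\tau\bar x-\Delta_\tau\xi=\Delta_\tau\bar r$ and $\Delta_\tau\bar y-\Delta_\tau v=\Delta_\tau\bar s$, so the claims are precisely $\Delta_\tau\bar r\geq0$ and $\Delta_\tau\bar s\geq0$. The table gives $\Delta_\tau\bar r=1$ for instances of~\boxref{T7} and $0$ for \boxref{T1}--\boxref{T6}; and $\Delta_\tau\bar s=p-q$ for~\boxref{T2} (positive, as that box carries the side condition $q\leq p-1$), $\Delta_\tau\bar s=p-m-1-\ell$ for~\boxref{T3} (nonnegative, as that box carries $\ell+m\leq p-1$), and $\Delta_\tau\bar s=0$ otherwise (for~\boxref{T4} because its side condition is $\ell+m=p-1$). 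Hence both inequalities hold; they are strict exactly where $\Delta_\tau\bar r>0$, that is only for~\boxref{T7}, whose box has $H_L=H_R=\epsilon$, respectively exactly where $\Delta_\tau\bar s>0$, that is only for~\boxref{T2} or for~\boxref{T3} with $p-m-1>\ell$, and in both of the latter boxes the incoming counter entry is $v_L=0$. This also establishes the ``additionally'' clause.

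The two identities in~\eqref{eq:Delta-zkbark} are, by contrast, \emph{not} definitional ($z\neq\zeta+v$ and $k\neq\bar k+(p+2)\lambda$ as parameters), so they call for a genuine row-by-row check of the table. For $\Delta z=\Delta\zeta+\Delta v$ one reads off, for each of \boxref{T1}--\boxref{T7}, that the $\Delta_\tau z$ entry equals the sum of the $\Delta_\tau\zeta$ and $\Delta_\tau v$ entries; the only nontrivial lines are \boxref{T3} and~\boxref{T4}, with $-1=(-\ell-1)+\ell$, and \boxref{T5}, with $q=(q+1)+(-1)$. For $\Delta k-\Delta\bar k=(p+2)\Delta\lambda$ one checks likewise; the only nonzero cases are \boxref{T3} and~\boxref{T4}, with $-(p+2)=(p+2)\cdot(-1)$, and \boxref{T5}, with $p+2=(p+2)\cdot 1$.

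If one does not wish to take Figure~\ref{fig:variations-gen-p} as given, the proposition still reduces to a finite computation: each entry $\Delta_\tau q$ is obtained by reading off the box in Figure~\ref{fig:phi-p} (the input displacement $\mu$, the output step $\bar a$, and the transformations $H_L\to H_R$ and $v_L\to v_R$) and applying the morphisms $(\alpha,\bar\alpha,\sigma_1,\sigma_2)$ defining~$q$ in Section~\ref{sec:invariants}. The only point needing care --- and the sole potential obstacle in an otherwise mechanical argument --- is to keep the side conditions of boxes \boxref{T2}--\boxref{T6} active throughout, since these are exactly what fix the signs in~\eqref{eq:Delta-xy} and pin down where equality fails.
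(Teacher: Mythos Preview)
Your proposal is correct and takes essentially the same approach as the paper, whose proof reads in full: ``By inspection of the table. (The equalities of~\eqref{eq:Delta-barxbary} are really just restatements of the definitions of $\bar r$ and~$\bar s$.)'' You have simply made the inspection explicit, and your reduction of~\eqref{eq:Delta-xy} to the nonnegativity of $\Delta_\tau\bar r$ and $\Delta_\tau\bar s$ via~\eqref{eq:Delta-barxbary} is a tidy way to organize that check.
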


\begin{proof}
By inspection of the table.  (The equalities of~\eqref{eq:Delta-barxbary} are really just
restatements of the definitions of $\bar r$ and~$\bar s$.)
\end{proof}

Since these quantities depend only on the transitions, we also have that
for given $w \in \Sigma_p^\ast$, $\bar w \in S_p^\ast$, and
$0 \leq i \leq j \leq n$,
the variations $\DLR{w}{i}{j}$ and $\DRL{\bar w}{i}{j}$
are sums of $\Delta_\tau$'s operating on the letters of $w$ and $\bar w$.
It follows by induction that the two equalities and two inequalities
in \eqref{eq:Delta-xy} and~\eqref{eq:Delta-zkbark}
still hold for $\Delta = \DLR{w}{i}{j}$ and $\Delta = \DRL{\bar w}{i}{j}$.

Observe that $\Delta \bar r$ and $\Delta \bar s$ are both nonnegative
for all choices of~$\Delta$
as a~$\Delta_\tau$, a~$\DLR{w}{i}{j}$, or a~$\DRL{\bar w}{i}{j}$,
provided~$i<j$.

\subsection{Properties of the constructed walks}
\label{sec:prop-of-contrd-walks}

\begin{theorem}\label{thm:bij-Phi-p-Psi-p}
The map $\Phi_p$ is a bijection
between $p$-Łukasiewicz walks and $p$-tandem walks
that preserves the total number of steps,
maps $k$ to~$\bar k$,
and maps the number of\/~$-1$ steps to the number of\/~\ldir{3bb} steps.
The map~$\Psi_p$ is the inverse bijection.
\end{theorem}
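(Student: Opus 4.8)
The plan is to organise the proof around two ``critical'' sets of words:
$D_{\mathrm{LR}}$, the set of $w\in\Sigma_p^\ast$ at which $\Phi_p$ is defined and for which the LR run ends with $(\sLR{H}_n,\sLR{v}_n)=(\epsilon,0)$; and
$D_{\mathrm{RL}}$, the set of $\bar w\in S_p^\ast$ for which the RL run ends with $(\sRL{H}_0,\sRL{v}_0)=(\epsilon,0)$.
I would then establish three facts:
\textbf{(a)} $\Phi_p$ and $\Psi_p$ restrict to mutually inverse bijections between $D_{\mathrm{LR}}$ and $D_{\mathrm{RL}}$;
\textbf{(b)} $D_{\mathrm{LR}}=\mathcal L_p$;
\textbf{(c)} $D_{\mathrm{RL}}=\mathcal T_p$.
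Fact~(a) lies just below the surface of the proposition stated after Figure~\ref{fig:partition}: the local reversibility encoded by that figure implies that, for $w\in D_{\mathrm{LR}}$, running the RL process on $\bar w:=\Phi_p(w)$ retraces the LR run on~$w$ in reverse, index by index. This is a downward induction on~$i$, with base case $(\sRL{H}_n,\sRL{v}_n)=(\sLR{H}_n,\sLR{v}_n)=(\epsilon,0)$ and inductive step given by the reversibility of the transition used at step~$i$; it yields $(\sRL{H}_i,\sRL{v}_i)=(\sLR{H}_i,\sLR{v}_i)$ throughout, hence $\bar w\in D_{\mathrm{RL}}$ and $\Psi_p(\bar w)=w$. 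The symmetric statement, using that the RL process never errors, gives $\Psi_p(D_{\mathrm{RL}})\subseteq D_{\mathrm{LR}}$ and $\Phi_p(\Psi_p(\bar w))=\bar w$ for $\bar w\in D_{\mathrm{RL}}$, and~(a) follows.

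For fact~(b), I would use the parameters $z$, $\zeta$, and~$v$. Since $\Delta z=\Delta\zeta+\Delta v$ holds across every transition (Proposition~\ref{prop inequalities}), it holds for every LR-variation, and since $z$, $\zeta$, $v$ all vanish at index~$0$ this gives $\LR{z}{w}{i}=\wt_\ell(\sLR{H}_i)+\sLR{v}_i$ at every index~$i$ reached by a successful LR run. Here $\LR{z}{w}{i}=\sum_{j\le i}w_j$ is exactly the height of the $i$th prefix of~$w$, while $\wt_\ell(\sLR{H}_i)\ge 0$ and $\sLR{v}_i\ge 0$; so that height is automatically nonnegative, and it vanishes precisely when $\sLR{H}_i=\epsilon$ and $\sLR{v}_i=0$, by Proposition~\ref{prop ways to be empty}. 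Now, by inspection of Figure~\ref{fig:algo-LR-elementary-step}, the sole error transition~\boxref{T8} is applied exactly when the current counter is $(\epsilon,0)$ and the letter read is~$-1$, that is, exactly when the current height is~$0$ and would drop to~$-1$. Hence the LR run on~$w$ succeeds if and only if every prefix of~$w$ has nonnegative height, and it then ends with $(\epsilon,0)$ if and only if the total height $\sum_j w_j$ is~$0$; these are precisely the two conditions defining~$\mathcal L_p$, so $D_{\mathrm{LR}}=\mathcal L_p$.

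Fact~(c) is the step I expect to cost the most work. Fix an arbitrary $\bar w\in S_p^\ast$ and write $(x_0,y_0)=(0,0),(x_1,y_1),\dots,(x_n,y_n)$ for the successive vertices of the associated $p$-tandem walk. I would prove, by downward induction on~$i$, the two identities
\[
\sRL{v}_i=-\min_{i\le j\le n}(y_j-y_i),
\qquad
\wt_m(\sRL{H}_i)=-\min_{i\le j\le n}(x_j-x_i),
\]
with the base case $i=n$ trivial. The inductive step reduces, using the transition-selection rules of Figure~\ref{fig:algo-RL-elementary-step} and the $\bar r$- and $\bar s$-rows of Figure~\ref{fig:variations-gen-p}, to the elementary observation that reading~$\bar w_i$ transforms $\sRL{v}_i$ into $\max(0,\sRL{v}_i-(\bar w_i)_2)$ and $\wt_m(\sRL{H}_i)$ into $\max(0,\wt_m(\sRL{H}_i)-(\bar w_i)_1)$; this is checked case by case over the seven transitions and their applicability conditions. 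Taking $i=0$ gives $\sRL{v}_0=-\min_{0\le j\le n}y_j$ and $\wt_m(\sRL{H}_0)=-\min_{0\le j\le n}x_j$. Since the index $j=0$ contributes the value~$0$ to each minimum, both minima are nonpositive, and they both vanish if and only if $x_j\ge 0$ and $y_j\ge 0$ for all~$j$, that is, if and only if $\bar w\in\mathcal T_p$. By Proposition~\ref{prop ways to be empty}, $\wt_m(\sRL{H}_0)=0$ is equivalent to $\sRL{H}_0=\epsilon$, so $\bar w\in D_{\mathrm{RL}}$ if and only if $\bar w\in\mathcal T_p$; thus $D_{\mathrm{RL}}=\mathcal T_p$.

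Combining (a)--(c), the maps $\Phi_p$ and $\Psi_p$ are mutually inverse bijections between $\mathcal L_p$ and $\mathcal T_p$, and they preserve the number of steps because each elementary step consumes one input letter and emits one output letter. For the statistic~$k$: summing the identity $\Delta k-\Delta\bar k=(p+2)\Delta\lambda$ (Proposition~\ref{prop inequalities}) along the LR run of a word $w\in\mathcal L_p$ gives $k(w)-\bar k(\Phi_p(w))=(p+2)\bigl(|\sLR{H}_n|-|\sLR{H}_0|\bigr)=0$, since $\sLR{H}_0=\sLR{H}_n=\epsilon$, so $\Phi_p$ carries $k$ to~$\bar k$. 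Finally, along the LR run the length of~$H$ increases by~$1$ only at~\boxref{T5} and decreases by~$1$ only at~\boxref{T3} and~\boxref{T4}, while $H$ is~$\epsilon$ at both ends; hence the number of applications of~\boxref{T5} equals the total number of applications of~\boxref{T3} and~\boxref{T4}. Since a letter~$-1$ of~$w$ is consumed exactly by the transitions \boxref{T3}, \boxref{T4}, \boxref{T6}, and~\boxref{T7}, and a step~\ldir{3bb} of~$\Phi_p(w)$ is emitted exactly by the transitions \boxref{T5}, \boxref{T6}, and~\boxref{T7}, it follows that the number of~$-1$ steps of~$w$ equals the number of~\ldir{3bb} steps of~$\Phi_p(w)$. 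The main obstacle is the RL invariant in~(c); the remaining verifications are routine bookkeeping with the table of Figure~\ref{fig:variations-gen-p}.
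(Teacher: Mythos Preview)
Your proof is correct, and its global architecture---the three facts (a), (b), (c)---mirrors the paper's organisation. Fact~(a) is exactly the content of the proposition the paper states after Figure~\ref{fig:partition}, and your treatment of~(b) via the invariant $\LR{z}{w}{i}=\wt_\ell(\sLR{H}_i)+\sLR{v}_i$ and the characterisation of~\boxref{T8} matches the paper's argument essentially line for line.

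Where you depart from the paper is in fact~(c), and the departure is a genuine improvement. The paper handles the RL direction through the auxiliary parameters $\bar r=\bar x-\xi$ and $\bar s=\bar y-v$: it observes that $\RL{\bar r}{\bar w}{i}$ and $\RL{\bar s}{\bar w}{i}$ are weakly decreasing in~$i$, locates the last index~$j$ at which each attains its final value, uses the side conditions on \boxref{T2}, \boxref{T3}, \boxref{T7} to force $\RL{\xi}{\bar w}{j}=0$ (respectively $\RL{v}{\bar w}{j}=0$) there, and then argues that the residual shift at index~$0$ must be nonpositive, hence zero. Your route is more direct and more informative: you prove the closed forms
\[
\sRL{v}_i=-\min_{i\le j\le n}(y_j-y_i),\qquad \wt_m(\sRL{H}_i)=-\min_{i\le j\le n}(x_j-x_i)
\]
by checking the one-step recurrences $\sRL{v}_{i-1}=\max\bigl(0,\sRL{v}_i-(\bar w_i)_2\bigr)$ and $\wt_m(\sRL{H}_{i-1})=\max\bigl(0,\wt_m(\sRL{H}_i)-(\bar w_i)_1\bigr)$ across all seven backward transitions. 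This immediately gives $D_{\mathrm{RL}}=\mathcal T_p$ at $i=0$, and as a bonus it yields the suffix-minimum interpretation of the counters that the paper only reaches later, in weaker form, as Proposition~\ref{prop:observations}. Your derivation of the equality of $-1$~steps and \ldir{3bb}~steps, via $\#\boxref{T5}=\#\boxref{T3}+\#\boxref{T4}$ from the telescoping of~$|H|$, is also a slightly different (and equally clean) alternative to the paper's deduction of that equality from $k=\bar k$ together with $k(w)=pn-(p+2)\,|w|_{-1}$ and $\bar k(\bar w)=pn-(p+2)\,|\bar w|_{\sldir{3bb}}$.
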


\begin{proof}
We first prove that $\Phi_p$ maps
prefixes of $p$-Łukasie\hyph wicz walks to quarter-plane walks
and that applying $\Phi_p$ to a $p$-Łukasiewicz walk in its entirety
results in $(v,H) = (0,\epsilon)$ at the end of the algorithm.

To this end, again take $w \in \Sigma_p^n$ and $\bar w = \Phi_p(w)$,
both of length~$n$.

Note that we have the initial conditions $\LR{\bar x}{w}{0} = \LR{\bar y}{w}{0} = \LR{z}{w}{0} = \LR{v}{w}{0} = \LR{k}{w}{0} = \LR{\bar k}{w}{0} = \LR{\lambda}{w}{0} = 0$.
By~\eqref{eq:Delta-xy} we have for all~$i$ that
$\DLR{w}{i-1}{i}\bar x \geq \DLR{w}{i-1}{i}\xi$.  Summing this with the initial conditions we get that for all~$i$
\[
\LR{\bar x}{w}{i} \geq \LR{\xi}{w}{i} .
\]
Arguing likewise from~\eqref{eq:Delta-xy} and~\eqref{eq:Delta-zkbark} we have
\begin{equation*}
  \LR{\bar y}{w}{i} \geq \LR{v}{w}{i} , \qquad
  \LR{z}{w}{i} = \LR{\zeta}{w}{i} + \LR{v}{w}{i} , \qquad
  \LR{k}{w}{i} - \LR{\bar k}{w}{i} = (p+2)\LR{\lambda}{w}{i} .
\end{equation*}

A first consequence is that $ \LR{\bar y}{w}{i}$ and~$\LR{\bar x}{w}{i}$ are nonnegative for all $i$, as they are both bounded below by lengths, and so
the $p$-tandem walk associated to~$\bar w$ remains in the quarter plane.
Second, $\LR{z}{w}{i}$~is the sum of two nonnegative integers,
and more precisely $\LR{z}{w}{i} = 0$ if and only if $(\LR{\zeta}{w}{i}, \LR{v}{w}{i}) = (0, 0)$.
As a consequence, by Proposition~\ref{prop ways to be empty}, $w$~is a $p$-Łukasiewicz word if and only if
the construction of~$\bar w = \Phi_p(w)$ ends with $H=\epsilon$ and~$v=0$.
Additionally, if $w$~is a $p$-Łukasiewicz word,
then $w = \Psi_p(\Phi_p(w))$
by the remark at the end of Section~\ref{sec:cons-maps}.
Finally, also $\LR{\lambda}{w}{n} =0$,
because $\LR{H}{w}{n} = \epsilon$,
and so
\begin{equation}\label{eq:phi-preserves-k-bark}
\LR{k}{w}{n} = \LR{\bar k}{w}{n} .
\end{equation}

\medskip

Next we will prove that $\Psi_p$~maps
quarter-plane walks in~$S_p^\ast$ to suffixes of $p$-Łukasiewicz walks
and that we have $(H,v) = (\epsilon,0)$ at the end of the algorithm.
To this end, now take $\bar w \in S_p^\ast$ and $w = \Psi_p(\bar w)$, both of length~$n$.

By~\eqref{eq:Delta-zkbark} we have for all $i$ that
\[
\DRL{\bar w}{i}{i+1}{z} = \DRL{\bar w}{i}{i+1}{\zeta} + \DRL{\bar w}{i}{i+1}{v}.
\]
Next, summing we get that $\RL{z}{\bar w}{i}$ is a sum of nonnegative terms and so is nonnegative for all~$i$.
Thus every suffix of~$w$ remains in the upper half plane.

Next consider $\RL{\bar r}{\bar w}{i}$ and $\RL{\bar s}{\bar w}{i}$.
All the entries for $\bar r$ and $\bar s$ in the table are nonnegative
and so $\RL{\bar r}{\bar w}{i} \geq \RL{\bar r}{\bar w}{j}$ for $i\leq j$,
and likewise for $\RL{\bar s}{\bar w}{i}$.
In other words, $\RL{\bar r}{\bar w}{i}$ and $\RL{\bar s}{\bar w}{i}$
are weakly decreasing as functions of~$i$.
Consequently $\RL{\bar r}{\bar w}{0}$ is the maximum value of
$\{ \RL{\bar r}{\bar w}{i} : 0 \leq i \leq n \}$.
Let $j$ be maximal such that $\RL{\bar r}{\bar w}{j} = \RL{\bar r}{\bar w}{0}$.
As observed in the previous section,
$\RL{\bar r}{\bar w}{i}$~can only change under transition~\boxref{T7}
and the stack must be empty for this transition.
Thus the choice of~$j$ implies by Proposition~\ref{prop ways to be empty}
that~$\RL{\xi}{\bar w}{j}=0$.
By definition of~$\bar r$,
for the remainder of the algorithm, from index~$j$ down to index~$0$,
$\RL{\bar x}{\bar w}{j'}$ and $\RL{\xi}{\bar w}{j'}$ move in parallel
with a fixed shift of $\RL{\bar r}{\bar w}{0}$.

At the end of the algorithm,
$\Psi_p$ has consumed all of~$\bar w$ from its end to its beginning,
and so we have $\RL{\bar x}{\bar w}{0} = 0$.
Since $\bar w$~is a quarter-plane walk, $\RL{\bar x}{\bar w}{j} \geq 0$, and by the previous paragraph $\RL{\bar x}{\bar w}{j} - \RL{\xi}{\bar w}{j} = \RL{\bar r}{\bar w}{0} = \RL{\bar x}{\bar w}{0} - \RL{\xi}{\bar w}{0}$.
We get that
\begin{align*}
  \RL{\xi}{\bar w}{0} & = \RL{\bar x}{\bar w}{0} - \RL{\bar x}{\bar w}{j} + \RL{\xi}{\bar w}{j} = - \RL{\bar x}{\bar w}{j} \leq 0.
\end{align*}
However $m$-weight is always nonnegative and hence $\RL{\xi}{\bar w}{0}=0$ and so, by Proposition~\ref{prop ways to be empty}, $H$~is empty at the end of the process.

We can make an analogous argument
with $\bar y$ in place of~$\bar x$ and $v$ in place of~$\wt_m(H)$.
Specifically, now let $j$ be maximal such that
$\RL{\bar s}{\bar w}{j} = \RL{\bar s}{\bar w}{0}$.
Since $\bar s$ can only change under transitions \boxref{T2} and~\boxref{T3}, $\RL{v}{\bar w}{j}=0$.
For the remainder of the algorithm, from index~$j$ down to index~$0$,
$\RL{\bar y}{\bar w}{j'}$ and~$\RL{v}{\bar w}{j'}$
move in parallel with a fixed shift of~$\RL{\bar s}{\bar w}{0}$.
We have $\RL{\bar y}{\bar w}{0}=0$, and,
since $\bar w$ is a quarter-plane walk, $\RL{\bar y}{\bar w}{j} \geq 0$.
Additionally, by the definition of~$j$,
$\RL{\bar y}{\bar w}{j} - \RL{v}{\bar w}{j} = \RL{\bar y}{\bar w}{0} - \RL{v}{\bar w}{0}$
and so we get $\RL{v}{\bar w}{0} = - \RL{\bar y}{\bar w}{j} \leq 0$.
However, $v$~is always nonnegative so $\RL{v}{\bar w}{0} = 0$.

From all this we conclude that $(H,v) = (\epsilon,0)$
after applying~$\Psi_p$ to~$\bar w$
and that $\RL{z}{\bar w}{0}=0$.
Thus $w$~is a $p$-Łukasiewicz word and $\bar w=\Phi_p(\Psi_p(\bar w))$.

Therefore for each $n$, $\Phi_p$ and $\Psi_p$ are mutually inverse bijections between $p$-Łukasiewicz walks of length $n$ and quarter-plane walks in $S_p^n$.

Finally, by~\eqref{eq:Delta-barxbary}, $\RL{k}{\bar w}{i} - \RL{\bar k}{\bar w}{i} = (p+2)\RL{\lambda}{\bar w}{i}$, and $\RL{\lambda}{\bar w}{0} =0$ and so
$\RL{k}{\bar w}{0} = \RL{\bar k}{\bar w}{0}$.
Remembering~\eqref{eq:phi-preserves-k-bark},
this tells us that $\Phi_p$ and $\Psi_p$, convert $k$ to $\bar k$ and vice versa.
Additionally, since $-1$ (respectively \ldir{3bb}) is the only step that decreases~$k$ (respectively $\bar k$)
and all the other steps increase~$k$ (respectively $\bar k$) by the same amount~$p+2$,
we also obtain that $\Phi_p$ and~$\Psi_p$ take the number of $-1$ steps to the number \ldir{3bb} steps
in the walks and vice versa.

\end{proof}

Many interesting consequences can be read off of the observations we have made so far.  We collect a few in the following proposition.
\begin{proposition}\label{prop:observations} \  
  \begin{itemize}
  \item A lower bound on the minimum $y$-value of the suffixes of the quarter-plane walk is obtained as follows: each time transition~(T2) or transition~(T3) with $p-m-1>\ell$ is used, the lower bound on the minimum for the suffix from after this step to the end is increased at least by $p-q>0$ (for~(T2)) or $p-m-1-\ell>0$ (for~(T3)).
  \item A lower bound on the minimum $x$-value of the suffixes of the quarter-plane walk is obtained as follows: each time transition~(T7) is used, the lower bound on the minimum for the suffix from after this step to the end is increased at least by~$1$.
  \item If a $p$-Łukasiewicz walk~$w$ returns to the $x$-axis after $i$~letters, thus factoring into two $p$-Łukasiewicz walks, write $w=w_1w_2$ for a left factor of length~$i$. Then $\Phi_p(w)$~is the concatenation of $\Phi_p(w_1)$ and $\Phi_p(w_2)$.
  \end{itemize}
\end{proposition}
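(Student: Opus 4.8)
The plan is to derive all three statements directly from material already in hand. Throughout, given a $p$-tandem walk $\bar w$ of length~$n$, I write $w = \Psi_p(\bar w) \in \mathcal L_p$, so that by Theorem~\ref{thm:bij-Phi-p-Psi-p} we have $\bar w = \Phi_p(w)$, and I work with the (deterministic) LR run of $\Phi_p$ on~$w$, with its sequence of transition instances $\tau_1,\dots,\tau_n$; phrases like ``each time transition~\boxref{T2} is used'' refer to this run.

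For the first two bullets I use the parameters $\bar s = \bar y - v$ and $\bar r = \bar x - \xi$ of Section~\ref{sec:invariants}. Since $v$ and $\xi = \wt_m(H)$ take nonnegative values, we have $\LR{\bar y}{w}{j} \geq \LR{\bar s}{w}{j}$ and $\LR{\bar x}{w}{j} \geq \LR{\bar r}{w}{j}$ for every~$j$; and by the observation closing Section~\ref{sec:invariants}, every $\Delta_\tau\bar s$ and every $\Delta_\tau\bar r$ is nonnegative, so $\LR{\bar s}{w}{\cdot}$ and $\LR{\bar r}{w}{\cdot}$ are weakly increasing. Hence, for $i \leq j \leq n$, $\LR{\bar y}{w}{j} \geq \LR{\bar s}{w}{i}$ and $\LR{\bar x}{w}{j} \geq \LR{\bar r}{w}{i}$, so that $\LR{\bar s}{w}{i}$ (resp.\ $\LR{\bar r}{w}{i}$) is a lower bound on the minimum $y$-coordinate (resp.\ $x$-coordinate) over the vertices $i,i+1,\dots,n$ of the quarter-plane walk, i.e.\ over its suffix from after step~$i$. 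It then only remains to read off from Figure~\ref{fig:variations-gen-p} the per-step increment $\LR{\bar s}{w}{i} - \LR{\bar s}{w}{i-1} = \Delta_{\tau_i}\bar s$ (resp.\ $\Delta_{\tau_i}\bar r$): the former is $0$ except for instances of~\boxref{T2} (value $p-q>0$) and of~\boxref{T3} (value $p-m-1-\ell$, positive exactly when $p-m-1>\ell$), and the latter is $0$ except for instances of~\boxref{T7}, where it is~$1$. This is precisely the content of the first two bullets.

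For the third bullet, write $w = w_1w_2$ with $|w_1| = i$ and suppose $w$ returns to the $x$-axis after $i$ letters, i.e.\ $\LR{z}{w}{i} = \sum_{k\le i} w_k = 0$. The factors $w_1$ and $w_2$ lie in $\mathcal L_p$ by the usual inheritance of the partial-sum condition (every prefix of $w_1$ is a prefix of $w$, and every prefix of $w_2$ prolonged by $w_1$ is a prefix of $w$). The key point is that the counter $(H,v)$ is reset at position~$i$: from the proof of Theorem~\ref{thm:bij-Phi-p-Psi-p}, $\LR{z}{w}{i} = \LR{\zeta}{w}{i} + \LR{v}{w}{i}$ with both summands nonnegative and $\LR{\zeta}{w}{i} = \wt_\ell(\LR{H}{w}{i})$, so $\LR{z}{w}{i} = 0$ forces $\LR{v}{w}{i} = 0$ and $\wt_\ell(\LR{H}{w}{i}) = 0$, whence $\LR{H}{w}{i} = \epsilon$ by Proposition~\ref{prop ways to be empty}. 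Thus $(\LR{H}{w}{i},\LR{v}{w}{i}) = (\epsilon,0)$, the initial counter. Since one step of the LR process (Figure~\ref{fig:algo-LR-elementary-step}) is a deterministic function of the current letter and the current counter alone, the run of $\Phi_p$ on $w$ decouples at position~$i$: its first $i$ steps are, step for step, the run of $\Phi_p$ on $w_1$, and its last $n-i$ steps are the run of $\Phi_p$ on $w_2$ (both starting from counter $(\epsilon,0)$ and reading the same letters in turn). Reading off the emitted letters gives $\Phi_p(w) = \Phi_p(w_1)\,\Phi_p(w_2)$.

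I expect no real obstacle here: the three claims are all short corollaries of machinery already established. The only points requiring a little care are matching the informal phrase ``lower bound on the minimum for the suffix'' to the concrete quantities $\LR{\bar s}{w}{i}$ and $\LR{\bar r}{w}{i}$ (and being careful with the index ranges), and, in the third bullet, noting that ``decoupling at position~$i$'' is legitimate precisely because the elementary LR step carries no state beyond the pair $(H,v)$ — which is exactly why the reset $(\LR{H}{w}{i},\LR{v}{w}{i}) = (\epsilon,0)$ suffices.
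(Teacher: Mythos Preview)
Your proof is correct and follows essentially the same approach as the paper's own proof: both use the parameters $\bar s=\bar y-v$ and $\bar r=\bar x-\xi$, the nonnegativity of $v$ and $\xi$, and the table of $\Delta_\tau$-values for the first two bullets, and the reset $(H,v)=(\epsilon,0)$ at the return to the $x$-axis (via $z=\zeta+v$ and Proposition~\ref{prop ways to be empty}) together with the determinism of the LR step for the third bullet. If anything, your write-up is slightly more explicit about the monotonicity of $\LR{\bar s}{w}{\cdot}$ and $\LR{\bar r}{w}{\cdot}$ than the paper's terser ``intuitively, $\bar y$ and $v$ move in parallel'' phrasing.
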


Note that in the case $p=1$, transition~(T3) does not exist and transition~(T2) becomes~(L1).  Consequenly, the first item of the proposition explains the observation at the end of Example~\ref{ex:bij-ex-motzkin-yamanouchi}.

Yeats \cite[Proposition~2.4]{Yeats-2014-BBC} has given the result analogous to the second point in the six-step model.  The converse to the third point is also true, but this is easier to see from the raising description and so the proof will be postponed to Section~\ref{sec:raising}.

\begin{proof}
  {}From Proposition~\ref{prop inequalities}, we have $\Delta \bar y = \Delta \bar s + \Delta v$.  Intuitively, this means that $\bar y$ and $v$ move in paralell except when $\bar s$ is nonzero.  From the table in Figure~\ref{fig:variations-gen-p} we see that $\bar s$ is nonzero precisely for transition~(T2) and transition~(T3) with $p-m-1>\ell$, and when $\bar s$ is nonzero it is positive.  Finally, by the Łukasiewicz property, $v\geq 0$, therefore $\bar s$ gives a lower bound as described in the first item of the statement.

  Similarly, from Proposition~\ref{prop inequalities}, we have $\Delta \bar x = \Delta \bar r + \Delta \xi$, so intuitively $\bar x$ and $\xi$ move in parallel except with $\bar r$ is nonzero.  From the table in Figure~\ref{fig:variations-gen-p} we see that $\bar r$ is nonzero precisely for transition~(T7) where it has value $1$.  Finally, $\xi$ is a weight on $H$ and so is always nonnegative, therefore $\bar r$ gives a lower bound as described in the second item of the statement.

  For the third point, since $w_1$~is a $p$-Łukasiewicz walk, then as in the proof of Theorem~\ref{thm:bij-Phi-p-Psi-p}, we have $(v,H) = (0,\epsilon)$ after $i$~steps of the LR process for $\Phi_p$.  Consequently, when the process moves beyond length~$i$ it cannot remember anything from the prefix, as the only internal state is $v$ and~$H$, both of which have been returned to their initial situation.  Therefore the same steps are outputted when $\Phi_p$~acts on the steps of~$w_2$ as part of~$w$ as when $\Phi_p$~acts on~$w_2$ as a $p$-Łukasiewicz walk in its own right.  The second point follows.
\end{proof}

\section{Six-step model}\label{sec:six-step}

Let us consider now the 6-step model, that is we return to the case $p=1$ and consider augmenting the step set with the steps' flips through the line $y=x$.

Figure~\ref{fig:phi-psi-sym} provides the forward transitions with their flips for the 6-step model.
They include a first copy
of the rules (U1) to~(DE) of the model for~$p=1$ in the top row and and a second copy of the same rules with and output letters flipped around the main diagonal in the bottom row.  The input letters now come in two colours.
This doubled set of forward transitions is given by the boxes below, where, now:
$\mu$~is from
$\{ {-\boldsymbol{1}}, {\boldsymbol0}, {+\boldsymbol{1}},
     {-\mathbb{1}}, {\mathbb0}, {+\mathbb1} \}$;
\ldir{{$m$}sb}~stands for a letter from the alphabet
$\Sigma_{1,\text{bicol}} := \{\, \ldir{1Lb}, \ldir{2Lb}, \ldir{3Lb},
\ldir{1Rb}, \ldir{2Rb}, \ldir{3Rb} \,\}$;
$(h_L, v_L)$ and~$(h_R, v_R)$ are counters, again from~$\bN^2$;
\ldir{{$\bar s$}sb}~stands for a letter from
$S_{1,\text{sym}} = \{\, \ldir{0bb}, \ldir{6bb}, \ldir{3bb}, \ldir{4bb},
                      \ldir{2bb}, \ldir{7bb} \,\}$.
We will call the steps \ldir{1Lb}, \ldir{2Lb}, \ldir{3Lb} \emph{solid} and the steps \ldir{1Rb}, \ldir{2Rb}, \ldir{3Rb} \emph{striped}, and the walks made of these steps we will call \emph{bicoloured}.

                      Define $\Phi_{1,\text{sym}}$ and $\Psi_{1,\text{sym}}$ by the following transitions as in Section~\ref{sec:3-step-model} or Section~\ref{sec:cons-maps}.

\begin{figure}
\noindent\ \hfill
\trsVIstep in:$\mu$/{$m$}sb, h:$h_L$->$h_R$, v:$v_L$->$v_R$, out:{$\bar s$}sb, label:(TR).
\ \hfill\
\parbox{.75\textwidth}{%
\noindent\ \hfill
\trsVIstep in:$+\mc1$/1Lb, h:$\alpha$->$\alpha$,     v:$\beta$->$\beta{+}1$, out:0bb,     label:(U1).
\trsVIstep in:$\mc0$/2Lb,  h:$\alpha$->$\alpha{+}1$, v:$\beta{+}1$->$\beta$, out:3bb,     label:(L2).
\trsVIstep in:$\mc0$/2Lb,  h:$\alpha$->$\alpha$,     v:0->0,                 out:0bb,     label:(L1).
\trsVIstep in:$-\mc1$/3Lb, h:$\alpha{+}1$->$\alpha$, v:$\beta$->$\beta$,     out:6bb,     label:(D3).
\trsVIstep in:$-\mc1$/3Lb, h:0->0,                   v:$\beta{+}1$->$\beta$, out:3bb,     label:(D2).
\trsVIstep in:$-\mc1$/3Lb, h:0->--,                  v:0->--,                out:{err.}sn,     label:(DE).
\hfill\

\smallskip

\noindent\ \hfill
\trsVIstep in:$+\mc1$/1Rb, h:$\alpha$->$\alpha{+}1$, v:$\beta$->$\beta$,     out:2bb, label:(U1$'$).
\trsVIstep in:$\mc0$/2Rb,  h:$\alpha{+}1$->$\alpha$, v:$\beta$->$\beta{+}1$, out:7bb, label:(L2$'$).
\trsVIstep in:$\mc0$/2Rb,  h:0->0,                   v:$\beta$->$\beta$,     out:2bb, label:(L1$'$).
\trsVIstep in:$-\mc1$/3Rb, h:$\alpha$->$\alpha$,     v:$\beta{+}1$->$\beta$, out:4bb, label:(D3$'$).
\trsVIstep in:$-\mc1$/3Rb, h:$\alpha{+}1$->$\alpha$, v:0->0,                 out:7bb, label:(D2$'$).
\trsVIstep in:$-\mc1$/3Rb, h:0->--,                  v:0->--,                out:{err.}sn, label:(DE$'$).
\hfill\
}
\caption{\label{fig:phi-psi-sym}
  Transitions used for the 6-step model.
  The first row is a copy of Figure~\ref{fig:phi} with ``solid'' input letters.
  The second row is a copy of Figure~\ref{fig:phi} with ``striped'' input letters, output letters flipped around the main diagonal, and roles of counters exchanged.
}
\end{figure}

Inspecting the transitions we see that the forwards and backwards input sets of the transitions partition the forwards and backwards input spaces and the outputs and parameters lie in the correct sets.  We can define the same parameters as in Section~\ref{sec:invariants} and capture how these parameters change in the table in Figure~\ref{fig:variations-6-step}.

\begin{figure}

\centerline{%
\renewcommand{\arraystretch}{1.1}
\begin{tabular}{c|cccccccccc}
{}          & (U1)&(U1$'$) & (L2)&(L2$'$) & (L1)&(L1$'$) & (D3)&(D3$'$) & (D2)&(D2$'$) \\
\hline
$\Delta \bar x$ &    0&1     &    1&$-1$  &    0&1     & $-1$&0     &    1&$-1$  \\
$\Delta \bar y$ &    1&0     & $-1$&1     &    1&0     &    0&$-1$  & $-1$&1     \\
$\Delta z$        &    1&1     &    0&0     &    0&0     & $-1$&$-1$  & $-1$&$-1$  \\
$\Delta h$        &    0&1     &    1&$-1$  &    0&0     & $-1$&0     &    0&$-1$  \\
$\Delta v$        &    1&0     & $-1$&1     &    0&0     &    0&$-1$  & $-1$&0     \\
$\Delta k$        &    1&1     &    1&1     &    1&1     & $-2$&$-2$  & $-2$&$-2$  \\
$\Delta \bar k$ &    1&$-1$  & $-2$&2     &    1&$-1$  &    1&$-1$  & $-2$&2 \\
$\Delta \bar r$ &    0&0  & 0&0     &    0&1  &    0&0  & 1&0 \\
$\Delta \bar s$ &    0&0  & 0&0     &    1&0  &    0&0  & 0&1
\end{tabular}
}

\caption{\label{fig:variations-6-step}
  Variations in the parameters across the transitions
  defined in Figure~\ref{fig:phi-psi-sym} for the 6-step model.}
\end{figure}

Inspecting the table we see
\begin{gather*}
\Delta z = \Delta h + \Delta v , \\
\Delta \bar x \geq \Delta h \quad\text{and}\quad \Delta \bar x > \Delta h \implies h = 0 , \\
\Delta \bar y \geq \Delta v \quad\text{and}\quad \Delta \bar y > \Delta v \implies v = 0 , \\
\Delta \bar x = \Delta h + \Delta \bar r , \qquad
\Delta \bar y = \Delta v + \Delta \bar s .
\end{gather*}

Then, using these equalities and inequalities, the proof that the maps give a bijection between bicoloured Motzkin paths and quarter-plane walks in $S_{1,\text{sym}}$ proceeds as in the unsymmetrized case.

This bijection answers a question of Bousquet-Mélou and Mishna (see \cite[Section~7.1, item~2]{BousquetMelouMishna-2010-WSS}); this is encapsulated in the following theorem.  The question was first answered in a more complicated way in \cite{Yeats-2014-BBC}.  We will use the notation $\mathcal{M}(n)$ and $\mathcal{M}^{\text{bicol}}(n)$ for the classes of Motzkin and bicoloured Motzkin paths with $n$ steps, $\mathcal{Q}(n)$ for 1-tandem walks with $n$ steps, $\mathcal{Q}^{\text{sym}}(n)$ for quarter-plane walks with $n$ steps each in $S_{1,\text{sym}}$ and $2^{[n]}$ for the powerset of $\{1,2,\ldots, n\}$.
\begin{theorem}\label{thm:2^n-explained}
The map $\Phi_{1, \text{sym}}$ is a length-preserving bijection between bicoloured Motzkin paths and quarter-plane walks with steps in $S_{1,\text{sym}}$. The map $\Psi_{1,\text{sym}}$ is the inverse bijection.

With the notation as described above, the chain of bijections
\[
\mathcal{Q}^{\text{sym}}(n) \xrightarrow{\Psi_{1,\text{sym}}} \mathcal{M}^{\text{bicol}}(n) \cong  \mathcal{M}(n)\times 2^{[n]} \xrightarrow{\Phi_{1} \times \text{id}} \mathcal{Q}(n)\times 2^{[n]}
\]
explains combinatorially why $|\mathcal{Q}^{\text{sym}}(n)| = 2^n|\mathcal{Q}(n)|$.
\end{theorem}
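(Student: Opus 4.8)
The plan is to split the theorem into two parts. The first part, that $\Phi_{1,\text{sym}}$ and $\Psi_{1,\text{sym}}$ are mutually inverse, length-preserving bijections between $\mathcal{M}^{\text{bicol}}(n)$ and $\mathcal{Q}^{\text{sym}}(n)$, is proved by transcribing the proof of Theorem~\ref{thm:bij-Phi-p-Psi-p} (carried out in Section~\ref{sec:prop-of-contrd-walks}) almost word for word, with the variation table of Figure~\ref{fig:variations-6-step} and the relations displayed after it playing the roles of~\eqref{eq:Delta-xy}, \eqref{eq:Delta-zkbark}, and~\eqref{eq:Delta-barxbary}. The second part, the identity $|\mathcal{Q}^{\text{sym}}(n)| = 2^n\,|\mathcal{Q}(n)|$, then follows from an elementary colouring bijection together with a count.

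For the first part I would carry out three steps. \emph{(i) Local reversibility.} Inspecting the twelve boxes of Figure~\ref{fig:phi-psi-sym}: every elementary forward step returns a counter in $\bN^2$ (each decrease of $h$ or of $v$ starts from a value $\geq 1$) and a letter of $S_{1,\text{sym}}$, and symmetrically for the backward steps; moreover the forward input triples $(\mu, h_L, v_L)$ partition $(\Sigma_{1,\text{bicol}} \times \bN^2) \smallsetminus \{(-\boldsymbol1, 0, 0), (-\mathbb1, 0, 0)\}$ and the backward input triples $(\bar s, h_R, v_R)$ partition $S_{1,\text{sym}} \times \bN^2$, so forward and backward elementary steps are mutually inverse. \emph{(ii) The LR process on $w \in \Sigma_{1,\text{bicol}}^\ast$, started at $(h,v) = (0,0)$.} From $\Delta z = \Delta h + \Delta v$ one gets $\LR{z}{w}{i} = \LR{h}{w}{i} + \LR{v}{w}{i} \geq 0$, with equality iff $(\LR{h}{w}{i}, \LR{v}{w}{i}) = (0,0)$, so $w$ lies in $\mathcal{M}^{\text{bicol}}$ precisely when the run meets no error and ends at $(0,0)$; from $\Delta \bar x \geq \Delta h$ and $\Delta \bar y \geq \Delta v$ one gets $\LR{\bar x}{w}{i} \geq \LR{h}{w}{i} \geq 0$ and $\LR{\bar y}{w}{i} \geq \LR{v}{w}{i} \geq 0$, so the associated walk stays in the quarter plane, i.e.\ $\Phi_{1,\text{sym}}(w) \in \mathcal{Q}^{\text{sym}}$; and local reversibility gives $\Psi_{1,\text{sym}}(\Phi_{1,\text{sym}}(w)) = w$. \emph{(iii) The RL process on $\bar w \in \mathcal{Q}^{\text{sym}}$, started at $(h,v) = (0,0)$.} All entries of the $\bar r$ and $\bar s$ rows of Figure~\ref{fig:variations-6-step} are nonnegative, so $\RL{\bar r}{\bar w}{i}$ and $\RL{\bar s}{\bar w}{i}$ are weakly decreasing in $i$; the ``maximal $j$'' argument of the proof of Theorem~\ref{thm:bij-Phi-p-Psi-p}, using $\Delta \bar x = \Delta h + \Delta \bar r$ (with $\bar r$ changing only under (L1$'$) and (D2), both of which force $h = 0$) and $\Delta \bar y = \Delta v + \Delta \bar s$ (with $\bar s$ changing only under (L1) and (D2$'$), both of which force $v = 0$), forces $\RL{h}{\bar w}{0} = 0$ and $\RL{v}{\bar w}{0} = 0$; together with $\RL{z}{\bar w}{0} = 0$ this shows $\Psi_{1,\text{sym}}(\bar w) \in \mathcal{M}^{\text{bicol}}$ and $\Phi_{1,\text{sym}}(\Psi_{1,\text{sym}}(\bar w)) = \bar w$. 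Length is preserved since every transition consumes one letter and emits one.

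For the second part, forgetting the colours of a bicoloured Motzkin path of length $n$ gives a Motzkin path of length $n$ — the two prefix inequalities defining $\mathcal{M}^{\text{bicol}}$ collapse to the single inequality defining $\mathcal{M}$ once $|w'|_{\sldir{1Lb}} + |w'|_{\sldir{1Rb}}$ is read as the number of up steps and $|w'|_{\sldir{3Lb}} + |w'|_{\sldir{3Rb}}$ as the number of down steps — while the set of positions carrying a striped step is an element of $2^{[n]}$. This map is inverse to the operation of recolouring a Motzkin path according to a chosen set of step positions, so $\mathcal{M}^{\text{bicol}}(n) \cong \mathcal{M}(n) \times 2^{[n]}$. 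Composing $\Psi_{1,\text{sym}}$ (a bijection $\mathcal{Q}^{\text{sym}}(n) \to \mathcal{M}^{\text{bicol}}(n)$ by the first part), this colouring isomorphism, and $\Phi_1 \times \text{id}$ (a bijection $\mathcal{M}(n) \times 2^{[n]} \to \mathcal{Q}(n) \times 2^{[n]}$ by the case $p = 1$ of Theorem~\ref{thm:bij-Phi-p-Psi-p}, since $\mathcal{L}_1 = \mathcal{M}$ and $\mathcal{T}_1 = \mathcal{Q}$) yields an explicit bijection $\mathcal{Q}^{\text{sym}}(n) \to \mathcal{Q}(n) \times 2^{[n]}$. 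Taking cardinalities and using $|2^{[n]}| = 2^n$ gives $|\mathcal{Q}^{\text{sym}}(n)| = 2^n\,|\mathcal{Q}(n)|$.

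The one place that is not a mechanical copy of Section~\ref{sec:prop-of-contrd-walks} is the verification, transition by transition across the ten boxes of Figure~\ref{fig:variations-6-step}, that the analogues $\Delta z = \Delta h + \Delta v$, $\Delta \bar x \geq \Delta h$ (strict only if $h = 0$), $\Delta \bar y \geq \Delta v$ (strict only if $v = 0$), $\Delta \bar x = \Delta h + \Delta \bar r$, and $\Delta \bar y = \Delta v + \Delta \bar s$ genuinely hold, including for the primed transitions in which the roles of the two counters are swapped; this is the step to be careful with, and once it is confirmed every step above is routine.
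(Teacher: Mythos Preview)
Your proposal is correct and follows essentially the same approach as the paper. The paper itself does not give a standalone proof of this theorem; rather, just before the statement it records the table of Figure~\ref{fig:variations-6-step}, observes the relations $\Delta z = \Delta h + \Delta v$, $\Delta \bar x \geq \Delta h$ (strict only when $h=0$), $\Delta \bar y \geq \Delta v$ (strict only when $v=0$), $\Delta \bar x = \Delta h + \Delta \bar r$, $\Delta \bar y = \Delta v + \Delta \bar s$, and then simply states that ``the proof that the maps give a bijection \dots\ proceeds as in the unsymmetrized case,'' which is precisely the transcription of the argument of Theorem~\ref{thm:bij-Phi-p-Psi-p} that you have written out in detail.
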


To say the same thing in a slightly different way, if $f$ is the map forgetting the colouring then $\Phi_{1}\circ f\circ\Psi_{1,\text{sym}}$ gives a combinatorially defined $2^n$-to-$1$ map from quarter-plane walks with steps in $S_{1,\text{sym}}$ to quarter-plane walks with steps in $S_{1}$ explaining $|\mathcal{Q}^{\text{sym}}(n)| = 2^n|\mathcal{Q}(n)|$.

Our answer to the question of Bousquet-Mélou and Mishna, while now fully combinatorial, still goes through Motzkin paths as intermediate objects.  We can nevertheless understand some things about the composition directly without this intermediary.  Following the definition of $\Psi_{1,\text{sym}}$ we see that steps $\ldir{0bb}$, $\ldir{3bb}$, and $\ldir{6bb}$ map to solid letters and $\ldir{2bb}$, $\ldir{7bb}$, and $\ldir{4bb}$ map to striped letters.  Carrying solid and striped as ``colours'' though $\Phi_1$ to obtain coloured quarter plane walks in $S_{1}$, we see that output steps which came from input steps in $S_{1}$ have the solid colour while output steps which came from the reflection of $S_{1}$ have the striped colour.  So the colours in the output remember which half of the step set the input steps came from.

From the definition of the maps one can immediately observe that $\Psi_{1,\text{sym}}$ restricted to those 6-step walks which happen to only use steps of $S_1$ agrees with $\Psi_1$ after colouring the output of $\Psi_1$ solid.  Additionally, $\Psi_{1,\text{sym}}$ restricted to those 6-step walks which only use steps not in $S_1$ agrees with $\Psi_1$ applied to the reflection of the walk after colouring the output of $\Psi_1$ striped.  This was part of the initial motivation: we were looking for a generalization that extended $\Psi_1$ and that also extended $\Psi_1$ applied to the reflection for walks purely not using $S_1$ steps.  Returning to the context of the question of question of Bousquet-Mélou and Mishna this implies that $\Phi_{1}\circ f\circ\Psi_{1,\text{sym}}$ is the identity on those walks with only steps in $S_1$ and is reflection through $y=x$ on those walks with no steps in $S_1$.


\section{Alternate interpretation via automata}\label{sec:automata}

It was said in the introduction
that the bijections we introduce in the present work
take their origin in automata theory.
Yet, no explicit automaton is provided
in Sections~\ref{sec:3-step-model}--\ref{sec:six-step},
where we used a notation by “boxes”.
In this section, we show that boxes are just a rephrasing
of the transitions in an automaton,
the only essential difference being that
a single one of our boxes will usually pack a number of transitions of an automaton.
We refer the reader to~%
\cite{AutebertBerstelBoasson-1997-CFL,Lothaire-1997-CW,Sakarovitch-2009-CUP}
for a detailed introduction to automata theory,
and we follow the first of these references for the terminology.
More accurately, our maps
$\phi$, $\psi$, $\Phi_p$, $\Psi_p$, $\Phi_{1,\text{sym}}$, $\Psi_{1,\text{sym}}$
turn out to be the \emph{transductions realized}
by \emph{pushdown transducers}
(see~\cite[Section~5.4]{AutebertBerstelBoasson-1997-CFL}),
with the slight extension that our automata/transducers
use two stacks (not just one).

We now give an explicit description
of the transducer to transform Motzkin walks to tandem walks (case~$p = 1$).
We fix
an input alphabet $A = \Sigma_1 = \{\, \ldir{1bb}, \ldir{2bb}, \ldir{3bb} \,\}$,
an output alphabet $B = S_1 = \{\, \ldir{0bb}, \ldir{6bb}, \ldir{3bb} \,\}$,
a finite set of states $Q = \{q\}$ (in fact a singleton),
a set of accepting states $F = Q$ (that is, we want all states to be accepting),
a stack alphabet common to both stacks $Z_1 = Z_2 = \{o, \iota\}$,
an internal starting configuration
$i = (q, o, o) \in Q \times Z_1^\ast \times Z_2^\ast$
(of the associated pushdown machine),
and a set of internal accepting configurations
$K = {F \times \{\epsilon\} \times \{\epsilon\}} \subset Q \times Z_1^\ast \times Z_2^\ast$.
We will also shortly fix a finite set of transition rules
$\gamma \subset {(A \cup \{\epsilon\})} \times Q \times Z_1 \times Z_1^\ast \times Z_2 \times Z_2^\ast \times Q \times B^\ast$,
after which collecting those sets into the tuple $\mathbb T = (A, Q, Z_1, Z_2, B, i, K, \gamma)$
will describe a pushdown transducer over~$A$.
As is traditional, an automaton is pictorially described
as an oriented graph whose vertices are the elements of~$Q$ (states),
with transition rules given by a labeling of edges.
This labeling takes the form “$s$; $t_1/T_1$, $t_2/T_2$; $s'$”,
which really denotes a 6-tuple
$(s, t_1, T_1, t_2, T_2, s') \in A \times Z_1 \times Z_1^\ast \times Z_2 \times Z_2^\ast \times B$.
In~our transducers,
all possible transitions are from state~$q$ to itself.
A computation by the underlying pushdown machine thus consists
of a succession of transitions.
Each transition operates in a well-defined manner.
If the machine inputs a letter~$s \in A$
when the internal configuration has~$t_1$ as top letter of the first stack
and $t_2$ as top letter of the second stack,
then it replaces
the top letter~$t_1$ by the word~$T_1$
and the top letter~$t_2$ by the word~$T_2$,
before outputting the letter~$s' \in B$.
We will make sure that no stack will ever become empty
and that their contents encode natural integers in unary:
some $m \in \bN$ will correspond to a stack consisting of
an~$o$ (at the bottom), followed by $m$~copies of~$\iota$ (towards the top).
More precisely, the first, respectively second, stack will represent
the counter~$h$, respectively the counter~$v$.
So the variations of counters will be be encoded in one of three forms:
$\iota/\epsilon$ for a decrement by~1;
$o/o$ and $\iota/\iota$ for no variation;
$o/o\iota$ and $\iota/\iota\iota$ for an increment by~1.
This leads to a set~$\gamma$ containing the following 11~transition rules:
\begin{itemize}
\item
  $\ldir{1bb}$; $o/o\iota$, $o/o$; $\ldir{0bb}$, \
  $\ldir{1bb}$; $o/o\iota$, $\iota/\iota$; $\ldir{0bb}$, \
  $\ldir{1bb}$; $\iota/\iota\iota$, $o/o$; $\ldir{0bb}$, \
  $\ldir{1bb}$; $\iota/\iota\iota$, $\iota/\iota$; $\ldir{0bb}$, \
\item
  $\ldir{2bb}$; $o/o$, $o/o$; $\ldir{0bb}$, \
  $\ldir{2bb}$; $o/o$, $\iota/\iota$; $\ldir{0bb}$, \
  $\ldir{2bb}$; $\iota/\epsilon$, $o/o\iota$; $\ldir{3bb}$, \
  $\ldir{2bb}$; $\iota/\epsilon$, $\iota/\iota\iota$; $\ldir{3bb}$, \
\item
  $\ldir{3bb}$; $\iota/\epsilon$, $o/o$; $\ldir{3bb}$, \
  $\ldir{3bb}$; $o/o$, $\iota/\epsilon$; $\ldir{6bb}$, \
  $\ldir{3bb}$; $\iota/\iota$, $\iota/\epsilon$; $\ldir{6bb}$.
\end{itemize}
Observe the absence of a transition rule of the form
  $\ldir{3bb}$; $o/T_1$, $o/T_2$; $s'$,
which would correspond to the box with an error
in Section~\ref{sec:3-step-model}:
not listing any such transition rule makes
a computation on input~$\ldir{3bb}$ and internal configuration $(q, o, o)$
impossible by the underlying pushdown machine.
The reader will easily identify the transduction realized
by the machine just introduced
as the map~$\phi$ of Section~\ref{sec:3-step-model}.

For general~$p$ as well as for the reflected $p=1$ case, we change
some of the ingredients defining the previous transducer~$\mathbb T$.
In the first situation,
the input alphabet, output alphabet, and the alphabet for the first stack are
\begin{gather*}
A = \Sigma_p = \{-1, 0, 1, \dots, p\}, \qquad
B = S_p = \{\, \pdir{p}, \dots, \pdir{m}, \dots, \pdir{0}, \pdir{-} \,\} , \\
Z_1 = \{o\} \cup A_p = \{o\} \cup \{ a_{\ell,m} : 0 \leq \ell+m \leq p-1 \} .
\end{gather*}
In the second situation,
the input and output alphabets are
\begin{gather*}
A = \Sigma_{1,\text{bicol}} = \{\, \ldir{1Lb}, \ldir{2Lb}, \ldir{3Lb},
                                   \ldir{1Rb}, \ldir{2Rb}, \ldir{3Rb} \,\} , \\
B = S_{1,\text{sym}} = \{\, \ldir{0bb}, \ldir{6bb}, \ldir{3bb}, \ldir{4bb},
                            \ldir{2bb}, \ldir{7bb} \,\} ,
\end{gather*}
and the alphabet for the first stack is unchanged.
We will not go here into the details of the corresponding redefinitions
for the sets~$\gamma$ of transition rules.
We only provide one example, the encoding of the rule~\boxref{T6}:
this single box gets expanded into the $(p-1)p/2$ transition rules
$-1$; $a_{\ell,m}/a_{\ell,m+1}$, $\iota/\epsilon$; $-1$,
one for each pair~$(\ell,m)$ satisfying $\ell+m\leq p-2$.

\section{Alternate interpretation via raising}\label{sec:raising}

So far, we have provided descriptions of algorithms by transducers
that map several classes of quarter-plane walks starting at the origin
to related classes of walks in the upper half plane returning to the $x$-axis.
Those maps are described in terms of the alphabet and notation
given in Section~\ref{sec:notation},
which we repeat here for convenience
and to make the present section more independent from the rest.
Specifically:
\begin{itemize}
\item In Section~\ref{sec:3-step-model}, the map~$\psi$ (also denoted~$\Psi_1$)
  converts an input (classical) tandem walk with steps from $S_1 = \{\, \ldir{0bb}, \ldir{6bb}, \ldir{3bb} \,\}$
  into a Motzkin walk using steps from $\Sigma_1 = \{-1,0,+1\} \simeq \{\, \ldir{3bb}, \ldir{2bb}, \ldir{1bb} \,\}$.
\item In Section~\ref{sec:p-tandem}, the map~$\Psi_p$
  converts an input $p$-tandem walk with steps from $S_p = \{\, \pdir{p}, \dots, \pdir{m}, \dots, \pdir{0}, \pdir{-} \,\}$
  into a $p$-Łukasiewicz walk using steps from $\Sigma_p = \{-1,0,1,\dots,p\}$.
\item In Section~\ref{sec:six-step}, the map~$\Psi_{1,\text{sym}}$
  converts an input walk with steps from the symmetrized step set
  $S_{1,\text{sym}} = \{\, \ldir{0bb}, \ldir{6bb}, \ldir{3bb},
                           \ldir{4bb}, \ldir{2bb}, \ldir{7bb} \,\}$
  into a bicoloured Motzkin walk using steps from
  $\Sigma_{1,\text{bicol}} =
  \{ {-\boldsymbol{1}}, {\boldsymbol{0}}, {+\boldsymbol{1}},
     {-\mathbb{1}}, {\mathbb{0}}, {+\mathbb{1}} \}
  \simeq
  \{\, \ldir{1Lb}, \ldir{2Lb}, \ldir{3Lb},
       \ldir{1Rb}, \ldir{2Rb}, \ldir{3Rb} \,\}$,
  with colours that we name “solid” and “striped”.
\end{itemize}
There is another interpretation of our maps, $\psi, \Psi_p, \Psi_{1,\text{sym}}$,
from quarter-plane walks to Motzkin, Łukasiewicz, and bicoloured Motzkin walks.
This other interpretation builds the output half-plane walk
by, at each stage of the construction, adding a new step at the end
and sometimes raising a past step.
In this interpretation,
stopping the process after dealing with a prefix of a quarter-plane walk
results in exactly the action of the full process on that shorter-length walk.
Thus the smaller words constructed along the way will not be prefixes of the ultimately constructed half-plane walk,
and the approach can no longer be implemented in a single pass.
However, it allows us to define the map~$\psi$
by reading the quarter-plane walk from left to right instead of from right to left.
Additionally it has a prefix property
that is reminiscent of the work by Bousquet-Mélou, Fusy, and Raschel%
~\cite{BousquetMelouFusyRaschel-2017-OBC}:
the process really constructs a sequence of half-plane walks
that all return to the $x$-axis,
one for each length up to the total length of the input quarter-plane walk.

In the $p=1$ case and the reflected $p=1$ case,
the same maps as those we constructed
in Sections \ref{sec:3-step-model} and~\ref{sec:six-step}
have been defined (albeit in a difficult-to-read way)
using this raising perspective in~\cite{Yeats-2014-BBC}.
The extension of this raising idea to the $p>1$ case is new
and will be presented in Section~\ref{sec:raising-p-gt-1}.

The decision of which past step to raise requires knowing whether past steps can be raised.
This cannot be determined simply by looking at past steps, as for instance for $p=1$ a past step $0$ could have been raised from $-1$ in which case it is no longer available to be raised or it may have been originally added to the word as a~$0$ in which case it has not yet been raised and is available for raising.
To keep track of this information, over the course of the construction each Motzkin, Łukasiewicz, or bicoloured Motzkin step carries the additional information of whether it can still be raised, and then this additional data is discarded at the end of the construction.
For the $p=1$ and the flipped $p=1$ cases, it is not necessary to carry this extra information provided the algorithm is allowed to look back at previous steps in the quarter-plane walk.
This is discussed further below.

In the $p>1$ case we additionally need to keep track of a pairing between certain steps.  Again this information is discarded at the end of the algorithm.

\subsection{Raising in the 3-step model}\label{sec:tandem-by-raising}

To make the idea clear first in the simplest case, consider the 3-step model one more time.  It is clearer and easier to generalize to $p>1$ if, for the duration of the algorithm, we augment the output alphabet so as to include the raisability of the steps.  Thus for the duration of the algorithm we will have 5 possible letters in the output word.  These will be $\ldir{2bb}$ for a $\ldir{2bb}$ in the Motzkin path that is not raisable and $\ldir{lbb}$ for one that is raisable.  Similarly there will be $\ldir{3bb}$ for a $\ldir{3bb}$ in the Motzkin path that is not raisable and $\ldir{dbb}$ for one that is raisable.  The fifth letter in the alphabet will be $\ldir{1bb}$ since such steps are never raisable.  At the end of the algorithm we forget the extra information and so project onto the smaller alphabet $\{\, \ldir{1bb}, \ldir{2bb}, \ldir{3bb} \,\}$ in order to get a Motzkin path.

\begin{figure}
\begin{ouralgo}
\Input{a quarter-plane walk written as a word $\bar{a}_1\bar{a}_2\dots \bar{a}_n$ in~$S_1^\ast$}
\Output{a Motzkin walk written as a word in~$\Sigma_1^\ast$}
\midalgo
\item Set $M_0:=\epsilon$.
\item For $i$ from~1 to~$n$:
\begin{enumerate}
  \item \label{step up} If $\bar{a}_i = \,\ldir{0bb}$, then $M_i := M_{i-1} \, \ldir{lbb}$.
  \item \label{step down} If $\bar{a}_i = \,\ldir{3bb}$ then
    write $M_{i-1} = w \, \ldir{lbb} \, w'$ where $w'$~is free of~$\ldir{lbb}$,
    and set $M_i := w \, \ldir{1bb} \, w' \, \ldir{dbb}$.
  \item \label{step back} If $\bar{a}_i = \,\ldir{6bb}$ then
    write $M_{i-1} = w \, \ldir{dbb} \, w'$ where $w'$~is free of~$\ldir{dbb}$
    and set $M_i := w \, \ldir{2bb} \, w' \, \ldir{3bb}$.
\end{enumerate}
\item Remove all $\raisabledec$ from ~$M_n$ and return the result.
\end{ouralgo}
\caption{\label{fig:algo-p=1-by-raisings}
  Algorithm by raisings in the 3-step model \cite{Yeats-2014-BBC}.}
\end{figure}

Given a quarter-plane walk in~$S_1^\ast$
written as a sequence of steps $\bar{a}_1\bar{a}_2\dots \bar{a}_n$,
the procedure in Figure~\ref{fig:algo-p=1-by-raisings}
computes a sequence of words, $M_0$~to~$M_n$,
by successively obtaining each~$M_i$ (of length~$i$) from $\bar{a}_i$ and~$M_{i-1}$.
The parsings of $M_{i-1}$ required in \ref{step down} and \ref{step back} are always possible by the quarter-plane property.
Note that steps added in case \ref{step back} cannot be raised and no step can be raised more than once.

In the language of \cite{Yeats-2014-BBC} a step that is raisable is \emph{marked} and a step that is not raisable is \emph{unmarked}.
Alternately, we can determine whether a step can be raised without enlarging the alphabet with $\raisabledec$ by looking at which $\bar{a}_j$ created the step.  This can be done in the following way: Let $m_{j}^{(i)}$ be the $j$th step in $M_i$ (with no $\raisabledec$), then
\begin{itemize}
  \item if $m_{j}^{(i)} = \ldir{2bb}$ and $\bar{a}_j=\ldir{0bb}$ then $m_{j}^{(i)}$ can still be raised while if $m_{j}^{(i)} = \ldir{2bb}$ and $\bar{a}_j=\ldir{3bb}$ then $m_{j}^{(i)}$ has been raised and so can no longer be raised;
  \item if $m_{j}^{(i)} = \ldir{3bb}$ and $\bar{a}_j=\ldir{3bb}$ then $m_{j}^{(i)}$ can still be raised while if $m_{j}^{(i)} = \ldir{3bb}$ and $\bar{a}_j=\ldir{6bb}$ then $m_{j}^{(i)}$ can never be raised; and
  \item if $m_{j}^{(i)} = \ldir{1bb}$ then $\bar{a}_j=\ldir{0bb}$ and $m_{j}^{(i)}$ has been raised and so can no longer be raised.
\end{itemize}

It turns out that this procedure is an alternate realization of the map $\psi$ from Section~\ref{sec:3-step-model}.
See Figure~\ref{fig raising eg} and the related Figure~\ref{fig:early-comparison} for an example.

\begin{figure}
\includegraphics[width=.9\linewidth]{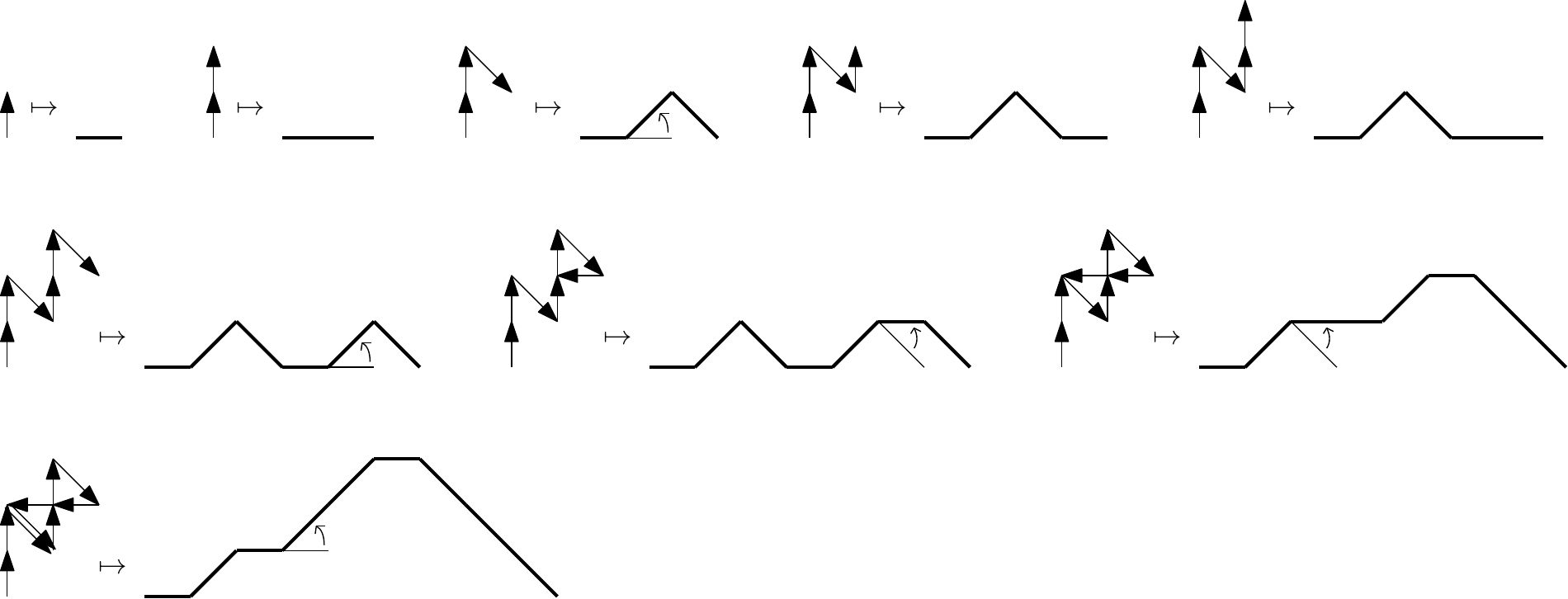}
\caption{\label{fig raising eg}
  An example of the raising process describing the map~$\psi$ on the 3-step model,
  applied to the word $\bar w = {}$\protect\ldir{0bb}\protect\ldir{0bb}\protect\ldir{3bb}\protect\ldir{0bb}\protect\ldir{0bb}\protect\ldir{3bb}\protect\ldir{6bb}\protect\ldir{6bb}\protect\ldir{3bb}.
  Note that we could represent all the Motzkin paths in a manner similar to Figure~\ref{fig:raising-prefixes}:
  in informal words, each Motzkin path is `above' all previous ones.
  Compare with columns (a) and~(b) in Figure~\ref{fig:early-comparison}.}
\end{figure}

It is useful to sketch a proof that the map defined above is $\psi$ because this explains how the idea of raising relates to the transitions that define $\psi$.  To know what step will be in location~$i$ in the final output, it suffices to know what step $\bar{a}_i$ is and to know whether or not the letter first output to this location~$i$ will have to be raised at some later stage.
Whether or not it will be raised is determined by which steps come later in the input, when read forward.
In particular every $\bar{a}_j = \ldir{3bb}$ must raise a step that came from some $\bar{a}_i=\ldir{0bb}$ with $i<j$ and every $\bar{a}_j=\ldir{6bb}$ must raise a step that came from some $\bar{a}_i=\ldir{3bb}$ with $i<j$.  In both cases they raise the most recent such steps.
If we read the quarter-plane walk in the reverse direction,
that is, backwards,
we already have the information about whether $m_i$ will need to be raised or not when we get to it.
Specifically, we only need to count $\ldir{6bb}$ steps and $\ldir{3bb}$ steps (these cause raisings)
and subtract these amounts from the counts for $\ldir{3bb}$ and $\ldir{0bb}$ steps, respectively (these get raised).
This is exactly what $h$ and $v$ are doing in the original definition of $\psi$.  The cases based on $h$ and $v$ give the raised and unraised options for the different input steps.

The proof in \cite{Yeats-2014-BBC} that the map is bijective is both more complicated and more unpleasant than the proofs of this paper, illustrating one of the benefits of our transition-based single pass construction.  None-the-less the raisings can be a helpful reformulation for visualizing the map, and explains how to implement $\psi$ while stepping \emph{forward} through the quarter-plane walk,
rather than \emph{backward} as with the RL transducer algorithm.

\subsection{Freedom of pairing and comparison with Eu, Fu, Hou, and Hsu}\label{sec:EuFuHouHsu}

Observe that knowing whether or not a step will be raised is sufficient to determine the output word, but it is less information than what is given by the raising algorithm.  The raising algorithm additionally pairs each raised step with the step that raised it.  The left-hand side of Figure~\ref{fig different pairings} shows the pairing obtained on the example worked out in Figure~\ref{fig raising eg}.  This leaves open the possibility that other algorithms could be described, which give the same output but a different pairing.
In fact, in~\cite{EuFuHouHsu-2013-SYT}, Eu, Fu, Hou, and Hsu give a description of the bijection by an algorithm giving a different pairing.
See also \cite{Eu-2010-SST}.

Their algorithm (Algorithm~B of \cite[Section~2.2]{EuFuHouHsu-2013-SYT}) from the quarter-plane walk takes three passes each running right to left. This algorithm views the quarter-plane walk as a word in $S_1^\ast$ and modifies it in place to obtain a Motzkin path. Because of the in-place modification, we need disjoint input and output alphabets, so we will write the Motzkin paths as words in $\{-1, 0, 1\}^\ast$ for the purposes of describing this algorithm.
This yields the algorithm in Figure~\ref{fig:algo-Eu-et-al}.

\begin{figure}
\begin{ouralgo}
\Input{a quarter-plane walk written as a word in~$S_1^\ast$}
\Output{a Motzkin walk written as a word in~$\{-1, 0, 1\}^\ast$}
\midalgo
  \item (First pass) Moving right to left convert each $\ldir{0bb}$ step to $0$
  \item (Second pass) Moving from right to left, for each $\ldir{3bb}$ find the closest preceding $0$ and convert the pair $(0, \ldir{3bb})$ to $(1, -1)$.
  \item (Third pass) Moving from right to left, for each $\ldir{6bb}$ find the closest preceding $-1$ and convert the pair $(-1, \ldir{6bb})$ to $(0, -1)$.
  \item Output the resulting word in $\{-1, 0, 1\}^\ast$.
\end{ouralgo}
\caption{\label{fig:algo-Eu-et-al}
  Algorithm by Eu, Fu, Hou, and Hsu in the 3-step model \cite{EuFuHouHsu-2013-SYT}.}
\end{figure}

As in the raising description, $\ldir{0bb}$ steps become $0$, which may or may not then be raised to $1$ by a later $\ldir{3bb}$ step, $\ldir{3bb}$ steps themselves become $-1$, which may or may not be raised to $0$ by a later $\ldir{6bb}$ step, and $\ldir{6bb}$ steps themselves always become~$-1$.
By making the three separate passes Eu, Fu, Hou, and Hsu's algorithm does not need to distinguish between those $0$ steps that have been raised and those that have not, as first all the unraised $0$s are generated, then in the second pass some are raised, and finally in the last pass the raised $0$s are generated.

Their algorithm also builds the same map $\psi$ because, similarly to the situation in the raising algorithm, to know what step will be in position $i$ in the output it suffices to know what the input step in position $i$ is and whether or not this step will get converted in the second or third pass.  To know whether or not a step will be converted, as we read from right to left through the word, we only need to count $\ldir{3bb}$ steps and $\ldir{6bb}$ steps (these cause conversions in the second and third passes) and subtract from the counts for each $\ldir{0bb}$ and $\ldir{3bb}$ steps, respectively, that we meet (these get converted).  This is again exactly what $h$ and $v$ are doing in the original definition of $\psi$.  However, the particular pairing given by the three passes is different from the one given in the raising description.

To see the different pairing, consider the example of Figure~\ref{fig raising eg}.  As a word the quarter-plane walk is
\[
\ldir{0bb}, \ldir{0bb}, \ldir{3bb}, \ldir{0bb}, \ldir{0bb}, \ldir{3bb}, \ldir{6bb}, \ldir{6bb}, \ldir{3bb}.
\]
After the first pass we have
\[
0, 0, \ldir{3bb}, 0, 0, \ldir{3bb}, \ldir{6bb}, \ldir{6bb}, \ldir{3bb}.
\]
After the second pass we have
\[
0, 1, {-1}, 0, 0, {-1}, \ldir{6bb}, \ldir{6bb}, {-1},
\]
and after the third pass we have
\[
0, 1, 0, 1, 1, 0, {-1}, {-1}, {-1},
\]
where the right-hand side of Figure~\ref{fig different pairings} shows the pairings.  Observe that the pairings are different though the Motzkin path is the same.

\begin{figure}
  \includegraphics[width=.9\linewidth]{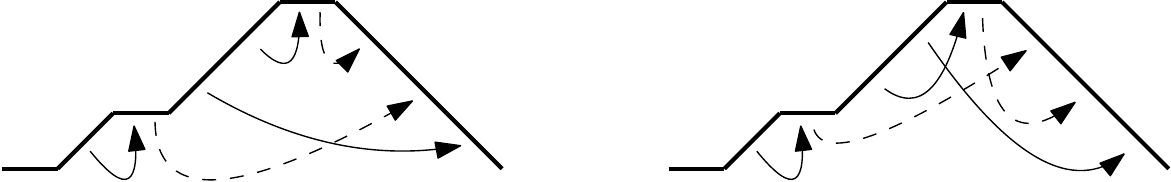}
  \caption{The example of Figure~\ref{fig raising eg} showing the different pairings from the raising algorithm (left) and Eu, Fu, Hou, and Hsu's algorithm (right).  Pairings indicated with solid lines come from \ref{step down} in the raising algorithm or the second pass of Eu, Fu, Hou, and Hsu's algorithm while pairings indicated with dashed lines come from \ref{step back} or the third pass.}\label{fig different pairings}
\end{figure}

The raising algorithm gives the pairing that is noncrossing when restricted to those pairings from step \ref{step down} and when restricted to those from \ref{step back}.  Eu, Fu, Hou, and Hsu's algorithm gives the pairing that takes the closest step from right to left and so is in some sense maximally crossing in each pass.

\subsection{Raising in the six-step model}

Passing to the six-step model from the raising perspective is very much the same, see \cite{Yeats-2014-BBC} for details.  Motzkin steps coming from $\ldir{6bb}$ and $\ldir{4bb}$ can never be raised, but Motzkin steps coming from the other four quarter-plane steps can be raised.  However they fall into two classes (do they go away from the $x$-axis or do they go away from the $y$-axis).  Likewise all the steps except $\ldir{0bb}$ and $\ldir{2bb}$ must cause raisings of earlier steps.  Again they fall into two classes (do they go towards the $x$-axis or do they go towards the $y$-axis).  This corresponds to the fact that in the transition-based description of the bijection for the six-step model we only need to keep track of two parameters rather than the four that one might naively expect from doubling the number of parameters in the three-step case.  Additionally a step raises whichever step in the correct category was most recent with no regard to which kind of step in that category it was.  This means that steps and flipped steps interact.

The algorithmic definition of the map has been given as \cite[Definition~2.1]{Yeats-2014-BBC}.
We reproduce it in Figure~\ref{fig:algo-flipped-by-raisings}.
Again we will, for the duration of the algorithm, augment our output alphabet by $\raisabledec$ to indicate raisability.

Given a quarter-plane walk in~$S_{1,\text{sym}}^\ast$
written as a sequence of steps $\bar{a}_1\bar{a}_2\dots \bar{a}_n$,
the following procedure computes a sequence of words, $M_0$~to~$M_n$,
by successively obtaining each~$M_i$ (of length~$i$) from $\bar{a}_i$ and~$M_{i-1}$:

\begin{figure}
\begin{ouralgo}
\Input{a quarter-plane walk written as a word $\bar{a}_1\bar{a}_2\dots \bar{a}_n$ in~$S_{1,\text{sym}}^\ast$}
\Output{a bicoloured Motzkin walk written as a word in~$\Sigma_{1,\text{bicol}}^\ast$}
\midalgo
\item Set $M_0:=\epsilon$.
\item For $i$ from~1 to~$n$:
\begin{enumerate}
  \item \label{step up2} If $\bar{a}_i = \ldir{0bb}$ then $M_i := M_{i-1} \, \ldir{lLb}$.
  \item \label{step right} If $\bar{a}_i = \ldir{2bb}$ then $M_i := M_{i-1} \, \ldir{lRb}$.
  \item \label{step diagdown} If $\bar{a}_i = \ldir{3bb}$ then either $M_{i-1}=w \, \ldir{lLb} \, w'$ or $M_{i-1} = w \, \ldir{dRb} \, w'$ with $w'$ free of both $\ldir{lLb}$ and $\ldir{dRb}$.  Then set $M_i := w \, \ldir{1Lb} \, w' \, \ldir{dLb}$ if $M_{i-1}=w \, \ldir{lLb} w'$ and $M_i:= w \, \ldir{2Rb} \, w' \, \ldir{dLb}$ otherwise.
  \item \label{step diagup} If $\bar{a}_i = \ldir{7bb}$ then
    either $M_{i-1}=w \, \ldir{lRb} \, w'$ or $M_{i-1} = w \, \ldir{dLb} \, w'$ with $w'$ free of both $\ldir{lRb}$ and $\ldir{dLb}$.  Then set $M_i := w \, \ldir{1Rb} \, w' \ldir{dRb}$ if $M_{i-1}=w \, \ldir{lRb} \, w'$ and $M_i:= w \, \ldir{2Lb} \, w' \, \ldir{dRb}$ otherwise.
  \item \label{step left} If $\bar{a}_i = \ldir{6bb}$ then
    either $M_{i-1}=w \, \ldir{lRb} \, w'$ or $M_{i-1} = w \, \ldir{dLb} \, w'$ with $w'$ free of both $\ldir{lRb}$ and $\ldir{dLb}$.  Then set $M_i := w \, \ldir{1Rb} \, w' \, \ldir{3Lb}$ if $M_{i-1}=w \, \ldir{lRb} \, w'$ and $M_i:= w \, \ldir{2Lb} \, w' \, \ldir{3Lb}$ otherwise.
  \item \label{step down2} If $\bar{a}_i = \ldir{4bb}$ then
    either $M_{i-1}=w \, \ldir{lLb} \, w'$ or $M_{i-1} = w \, \ldir{dRb} \, w'$ with $w'$ free of both $\ldir{lLb}$ and $\ldir{dRb}$.  Then set $M_i := w \, \ldir{1Lb} \, w' \, \ldir{3Rb}$ if $M_{i-1}=w \, \ldir{lLb} \, w'$ and $M_i:= w \, \ldir{2Rb} \, w' \, \ldir{3Rb}$ otherwise.
\end{enumerate}
\item Remove all $\raisabledec$ from~$M_n$ and return the result.
\end{ouralgo}
\caption{\label{fig:algo-flipped-by-raisings}
  Algorithm by raisings in the 6-step model \cite{Yeats-2014-BBC}.}
\end{figure}

In \cite{Yeats-2014-BBC} the bicolouring is given by red and black where solid here is red there and striped here is black there.  The proof that this defines the same map as in Section~\ref{sec:six-step} is analogous to that of the 3-step model.

\subsection{Raising for $p>1$}\label{sec:raising-p-gt-1}

For $p>1$ we can move beyond \cite{Yeats-2014-BBC}
and still give a raising interpretation for $\Psi_p$.
There are additional complications compared to the $p=1$ cases
as steps may be raised more than once,
may be frozen from further raising by later steps,
and may pass back their raising to a previous step
(that is, at a position to its left).
A first, rough description of the process follows,
before we devote the rest of the section to formal details,
culminating with the result that
a raising algorithm (given in Figure~\ref{fig:algo-gen-p-by-raisings})
and the RL transducer algorithm
compute the same object
(Theorem~\ref{thm:raising-RL}).
Reading forwards through the quarter-plane walk,
take a letter~$\bar{a}$ in the latter
to the smallest Łukasiewicz letter~$\mu$
that is possible under some transition for that quarter-plane letter~$\bar{a}$.
Most of the time this will break the Łukasiewicz property.
Then raise enough previous steps to re-obtain the property.
The steps to raise are
recent ones that came from quarter-plane steps
that went away from whichever axis $\bar{a}$~goes towards
and that have not already been maximally raised.

\subsubsection{Raisability and modality of raising}

As for the case~$p=1$, we will keep track of raisability
by augmenting the alphabet used during the run of our algorithm,
and discarding the raisability information at the end of it.
\emph{Raisability} is now an integer parameter~$j\geq0$,
indicating how many times the step can be raised
(each time by~1, as we will see).
However, we need to keep track
of more than just the raisability of a step,
namely we need to keep track of
which kind of future steps it can be raised by
and whether it can be raised itself
or has to pass its raising on to a previous “proxy” step.
We will encode this \emph{modality} of raising
with a second integer parameter $t\geq -1$.
Depending on modality, steps separate into three sorts.
Steps with modality~$t = -1$ can be raised themselves,
but only by one of the $p$~steps of the form $(-m, p-m)$ for~$m\geq1$
(those having some strict west component).
Steps with modality~$t = 0$ can be raised themselves,
but only by a step $(1, -1)$.
Steps with modality~$t \geq 1$ can only be raised by proxy,
by a step $(1, -1)$,
and the value of~$t$ then indicates the distance to the proxy step.
(Observe that, conversely, a step~$(0, p)$ never causes a raising.)

We arrive at an alphabet of letters that we denote~$\raisablelink{\mu}{j}{t}$.
For example, $\raisablelink{0}{4}{-1}$~represents a 0~letter from the Łukasiewicz alphabet that can be raised 4~times, by a step with strict west component and so has no proxy, while $\raisablelink{-1}{1}{3}$~is a letter~$-1$ that can be raised once, by a south-east step and has the letter three positions before it as its proxy.
(Beware that what we will soon define as \emph{a step getting raised}
is marked by the decrease of its raisability~$j$,
and not necessarily by the increase of its nominal value~$\mu$.)

\begin{figure}
\begin{ouralgo}
\Input{a quarter-plane $p$-tandem walk written as a word $\bar{a}_1\bar{a}_2\dots \bar{a}_n$ in~$S_p^\ast$}
\Output{a $p$-Łukasiewicz walk written as a word in~$\Sigma_p^\ast$}
\midalgo
\item Set $L_0:=\epsilon$.
\item For $i$ from~1 to~$n$:
\begin{enumerate}
  \item \label{step p} If $\bar{a}_i = (0,p)$ then $L_i := L_{i-1}\raisablelink{0}{p}{0}$.
  \item \label{step -1} If $\bar{a}_i = (1,-1)$ then write $L_{i-1} = w \raisablelink{\mu}{j}{t} w'$ where $j>0$ and~$t\geq0$
      and where $w'$~has no letters with raisability~${>0}$ and modality~$\geq0$.
    \begin{itemize}
    \item If $t=0$ then $L_i := w \raisablelink{\mu+1}{j-1}{0} w' \raisablelink{-1}{1}{-1}$.
    \item If $t>0$ then rewrite $L_{i-1} = w'' \raisablelink{\mu'}{0}{-1} w'''  \raisablelink{\mu}{j}{t} w'$ where $w'''$ has length $t-1$.  Then $L_i := w''\raisablelink{\mu'+1}{0}{-1} w'''  \raisablelink{\mu}{j-1}{t} w'\raisablelink{-1}{1}{-1}.$
    \end{itemize}
  \item \label{step m} If $\bar{a}_i = (-m, p-m)$ with $p \geq m \geq 1$ then write
    \[
    L_{i-1} = w_0 \raisablelink{\mu_1}{j_1}{-1} w_1  \raisablelink{\mu_2}{j_2}{-1} w_2\cdots  w_{m-1} \raisablelink{\mu_m}{j_m}{-1} w_m
    \]
    where, for each~$\ell>0$, $j_\ell$~is~$> 0$
    and $w_\ell$~is free of steps with raisability~${>0}$ and modality~$-1$.
    Then
    \[L_{i} :=  w_0 \raisablelink{\mu_1+1}{0}{-1} w_1  \raisablelink{\mu_2}{0}{-1} w_2\cdots  w_{m-1} \raisablelink{\mu_m}{0}{-1} w_m \raisablelink{-1}{p-m}{d}\]
    where $d=|w_m| + |w_{m-1}| + \dots + |w_1| + m=i-1-|w_0|$.
\end{enumerate}
\item Remove the raisability and modality information from~$L_n$,
  that is, map each~$\raisablelink{\mu}{j}{t}$ to a plain~$\mu$,
  and return the result.
\end{ouralgo}
\caption{\label{fig:algo-gen-p-by-raisings}
  Algorithm by raisings for general~$p \geq 1$.}
\end{figure}

Given a quarter-plane walk in $S_p^\ast$
written as a sequence of steps $\bar{a}_1\bar{a}_2\dots \bar{a}_n$,
the procedure in Figure~\ref{fig:algo-gen-p-by-raisings}
computes a sequence of words, $L_0$~to~$L_n$,
by successively obtaining each~$L_i$ (of length~$i$) from $\bar{a}_i$ and~$L_{i-1}$.
Observe that the factorization in the second point of~\ref{step -1}
is always possible because the only case yielding $t>0$, case~\ref{step m},
also places some~$\raisablelink{\mu'}{0}{-1}$ to its left,
which then can only be altered into some~$\raisablelink{\mu''}{0}{-1}$
for~$\mu''\geq\mu'$.
Also, \ref{step m}~can be rephrased as follows: find the $m$ most recent raisable letters with modality~$-1$, raise the $m$th oldest one, and remove the raisability of the $m$~letters for the rest of the process.  Furthermore, the $m-1$ more recent ones are permanently frozen as their raisability is 0 and they cannot be the proxy steps for any future step.
The $m$th most recent of the letters that were found is different.  Its raisability has also been set to $0$ but it is a proxy.
To avoid ambiguities we will use the following language with respect to raising:
\begin{itemize}
\item In cases \ref{step m} and \ref{step -1} we say that the step at position $i$ \emph{causes a raising}.
\item In case \ref{step -1}, in both the $t=0$ and $t>0$ cases we say that the letter~$\raisablelink{\mu}{j}{t}$ \emph{gets raised}, becoming $\raisablelink{\mu+1}{j-1}{t}$ and $\raisablelink{\mu}{j-1}{t}$ respectively, and that the obtained letter \emph{was raised by} the step at position~$i$.  The $t>0$ case is a raising that is passed on to a proxy step, precisely~$\raisablelink{\mu'}{0}{-1}$, but it still counts as a raising for~$\mu$ not for~$\mu'$.
\item In case \ref{step m} we say that the letter $\raisablelink{\mu_1}{j_1}{-1}$ \emph{gets raised}, becoming $\raisablelink{\mu_1+1}{0}{-1}$, and that the obtained letter \emph{was raised by} the step at position~$i$. We say the~$\raisablelink{\mu_\ell}{j_\ell}{-1}$ for $1\leq\ell\leq m$ \emph{get frozen by} the step at position~$i$, and this also does not count as a raising.
\end{itemize}
Note that the raisability of a letter is never increased and only letters with positive raisability can get raised.

\begin{figure}
\begin{tiny}
\renewcommand{\arraystretch}{1.3}
\begin{tabular}{r|c@{\,\,}c@{\,\,}c@{\,\,}c@{\,\,}c@{\,}|@{\,}c@{\,\,}c@{\,\,}c@{\,\,}c@{\,\,}c@{\,}|@{\,}c@{\,\,}c@{\,\,}c@{\,\,}c@{\,\,}c@{\,}|@{\,}c@{\,\,}c@{\,\,}c@{\,\,}c@{\,\,}c@{\,}|@{\,}c}
$\bar w$ & 5  & D  & D  & D  & D  & D  & 3  & D  & D  & 3  & D  & 4  & 5  & D  & D  & D  & D  & 4  & D  & D  & D  \\
         & a  & b1 & b1 & b1 & b1 & b1 & c  & b2 & b2 & c  & b2 & c  & a  & b1 & b1 & b1 & b1 & c  & b2 & b2 & b2 \\
\hline
 1 & \rlk{050} \\
 2 & \rlk{140} & \rlk{D1-} \\
 3 & \rlk{230} & \rlk{D1-} & \rlk{D1-} \\
 4 & \rlk{320} & \rlk{D1-} & \rlk{D1-} & \rlk{D1-} \\
 5 & \rlk{410} & \rlk{D1-} & \rlk{D1-} & \rlk{D1-} & \rlk{D1-} \\
 6 & \rlk{500} & \rlk{D1-} & \rlk{D1-} & \rlk{D1-} & \rlk{D1-} & \rlk{D1-} \\
 7 & \rlk{500} & \rlk{D1-} & \rlk{D1-} & \rlk{D1-} & \rlk{00-} & \rlk{D0-} & \rlk{D32} \\
 8 & \rlk{500} & \rlk{D1-} & \rlk{D1-} & \rlk{D1-} & \rlk{10-} & \rlk{D0-} & \rlk{D22} & \rlk{D1-} \\
 9 & \rlk{500} & \rlk{D1-} & \rlk{D1-} & \rlk{D1-} & \rlk{20-} & \rlk{D0-} & \rlk{D12} & \rlk{D1-} & \rlk{D1-} \\
10 & \rlk{500} & \rlk{D1-} & \rlk{D1-} & \rlk{D1-} & \rlk{20-} & \rlk{D0-} & \rlk{D12} & \rlk{00-} & \rlk{D0-} & \rlk{D32} \\
11 & \rlk{500} & \rlk{D1-} & \rlk{D1-} & \rlk{D1-} & \rlk{20-} & \rlk{D0-} & \rlk{D12} & \rlk{10-} & \rlk{D0-} & \rlk{D22} & \rlk{D1-} \\
12 & \rlk{500} & \rlk{D1-} & \rlk{D1-} & \rlk{D1-} & \rlk{20-} & \rlk{D0-} & \rlk{D12} & \rlk{10-} & \rlk{D0-} & \rlk{D22} & \rlk{00-} & \rlk{D41} \\
13 & \rlk{500} & \rlk{D1-} & \rlk{D1-} & \rlk{D1-} & \rlk{20-} & \rlk{D0-} & \rlk{D12} & \rlk{10-} & \rlk{D0-} & \rlk{D22} & \rlk{00-} & \rlk{D41} & \rlk{050} \\
14 & \rlk{500} & \rlk{D1-} & \rlk{D1-} & \rlk{D1-} & \rlk{20-} & \rlk{D0-} & \rlk{D12} & \rlk{10-} & \rlk{D0-} & \rlk{D22} & \rlk{00-} & \rlk{D41} & \rlk{140} & \rlk{D1-} \\
15 & \rlk{500} & \rlk{D1-} & \rlk{D1-} & \rlk{D1-} & \rlk{20-} & \rlk{D0-} & \rlk{D12} & \rlk{10-} & \rlk{D0-} & \rlk{D22} & \rlk{00-} & \rlk{D41} & \rlk{230} & \rlk{D1-} & \rlk{D1-} \\
16 & \rlk{500} & \rlk{D1-} & \rlk{D1-} & \rlk{D1-} & \rlk{20-} & \rlk{D0-} & \rlk{D12} & \rlk{10-} & \rlk{D0-} & \rlk{D22} & \rlk{00-} & \rlk{D41} & \rlk{320} & \rlk{D1-} & \rlk{D1-} & \rlk{D1-} \\
17 & \rlk{500} & \rlk{D1-} & \rlk{D1-} & \rlk{D1-} & \rlk{20-} & \rlk{D0-} & \rlk{D12} & \rlk{10-} & \rlk{D0-} & \rlk{D22} & \rlk{00-} & \rlk{D41} & \rlk{410} & \rlk{D1-} & \rlk{D1-} & \rlk{D1-} & \rlk{D1-} \\
18 & \rlk{500} & \rlk{D1-} & \rlk{D1-} & \rlk{D1-} & \rlk{20-} & \rlk{D0-} & \rlk{D12} & \rlk{10-} & \rlk{D0-} & \rlk{D22} & \rlk{00-} & \rlk{D41} & \rlk{410} & \rlk{D1-} & \rlk{D1-} & \rlk{D1-} & \rlk{00-} & \rlk{D41} \\
19 & \rlk{500} & \rlk{D1-} & \rlk{D1-} & \rlk{D1-} & \rlk{20-} & \rlk{D0-} & \rlk{D12} & \rlk{10-} & \rlk{D0-} & \rlk{D22} & \rlk{00-} & \rlk{D41} & \rlk{410} & \rlk{D1-} & \rlk{D1-} & \rlk{D1-} & \rlk{10-} & \rlk{D31} & \rlk{D1-} \\
20 & \rlk{500} & \rlk{D1-} & \rlk{D1-} & \rlk{D1-} & \rlk{20-} & \rlk{D0-} & \rlk{D12} & \rlk{10-} & \rlk{D0-} & \rlk{D22} & \rlk{00-} & \rlk{D41} & \rlk{410} & \rlk{D1-} & \rlk{D1-} & \rlk{D1-} & \rlk{20-} & \rlk{D21} & \rlk{D1-} & \rlk{D1-} \\
21 & \rlk{500} & \rlk{D1-} & \rlk{D1-} & \rlk{D1-} & \rlk{20-} & \rlk{D0-} & \rlk{D12} & \rlk{10-} & \rlk{D0-} & \rlk{D22} & \rlk{00-} & \rlk{D41} & \rlk{410} & \rlk{D1-} & \rlk{D1-} & \rlk{D1-} & \rlk{30-} & \rlk{D11} & \rlk{D1-} & \rlk{D1-} & \rlk{D1-} \\
\hline
$w$  & 5  & D  & D  & D  & 2  & D  & D  & 1  & D  & D  & 0  & D  & 4  & D  & D  & D  & 3  & D  & D  & D  & D
\end{tabular}
\end{tiny}
\caption{\label{fig:raising-exec}
  Example of execution of the raising algorithm on the 5-tandem walk~$\bar w$
  of Figure~\ref{fig:bij-ex-p=5} (right).
  The letters~$\bar\mu$ in row~$\bar w$ (input) give the $p$-tandem word~$\bar w$ as the succession of steps~$(\bar\mu-5,\bar\mu)$, or $(1,-1)$~if $\bar\mu={}$D.
  The letters~$\mu$ in row~$w$ (output) give the Łukasiewicz walk~$w$ as the succession of steps~$(1,\mu)$, with D~standing again for~$-1$.
  The second row indicates the case used in the algorithm:
  `a'~for case~\ref{step p}; `b1' and~`b2' for the first and second item of case~\ref{step -1}; `c'~for case~\ref{step m}.
  The rows numbered from~1 to~21 provide the successive Łukasiewicz words~$L_i$ constructed by the algorithm.}
\end{figure}

\begin{figure}
\resizebox{\textwidth}{!}{%
\begin{mypic}
\foreach \x in {0,1,2,3,4,5,6,7,8,9,10,11,12,13,14,15,16,17,18,19,20,21}
  \foreach \y in {0,1,2,3,4,5}
    \fill(\x,\y) circle[radius=2pt];
\draw[->](0,0)--(22,0);
\draw[->](0,0)--(0,6);
%
\draw[thick,-](0,0)--(1,0);
\draw[thick,-](0,0)--(1,1)--(2,0);
\draw[thick,-](0,0)--(1,2)--(2,1)--(3,0);
\draw[thick,-](0,0)--(1,3)--(2,2)--(3,1)--(4,0);
\draw[thick,-](0,0)--(1,4)--(2,3)--(3,2)--(4,1)--(5,0);
\draw[thick,-](0,0)--(1,5)--(2,4)--(3,3)--(4,2)--(5,1)--(6,0);
\draw[thick,-](0,0)--(1,5)--(2,4)--(3,3)--(4,2)--(5,2)--(6,1)--(7,0);
\draw[thick,-](0,0)--(1,5)--(2,4)--(3,3)--(4,2)--(5,3)--(6,2)--(7,1)--(8,0);
\draw[thick,-](0,0)--(1,5)--(2,4)--(3,3)--(4,2)--(5,4)--(6,3)--(7,2)--(8,1)--(9,0);
\draw[thick,-](0,0)--(1,5)--(2,4)--(3,3)--(4,2)--(5,4)--(6,3)--(7,2)--(8,2)--(9,1)--(10,0);
\draw[thick,-](0,0)--(1,5)--(2,4)--(3,3)--(4,2)--(5,4)--(6,3)--(7,2)--(8,3)--(9,2)--(10,1)--(11,0);
\draw[thick,-](0,0)--(1,5)--(2,4)--(3,3)--(4,2)--(5,4)--(6,3)--(7,2)--(8,3)--(9,2)--(10,1)--(11,1)--(12,0);
\draw[thick,-](12,0)--(13,0);
\draw[thick,-](12,0)--(13,1)--(14,0);
\draw[thick,-](12,0)--(13,2)--(14,1)--(15,0);
\draw[thick,-](12,0)--(13,3)--(14,2)--(15,1)--(16,0);
\draw[thick,-](12,0)--(13,4)--(14,3)--(15,2)--(16,1)--(17,0);
\draw[thick,-](12,0)--(13,4)--(14,3)--(15,2)--(16,1)--(17,1)--(18,0);
\draw[thick,-](12,0)--(13,4)--(14,3)--(15,2)--(16,1)--(17,2)--(18,1)--(19,0);
\draw[thick,-](12,0)--(13,4)--(14,3)--(15,2)--(16,1)--(17,3)--(18,2)--(19,1)--(20,0);
\draw[thick,-](12,0)--(13,4)--(14,3)--(15,2)--(16,1)--(17,4)--(18,3)--(19,2)--(20,1)--(21,0);
\foreach \x in {0,5,10,15,20} \draw(\x,-.7) node{\tiny\x};
\foreach \y in {0,5} \draw(-.7,\y) node{\tiny\y};
\end{mypic}
}
\caption{\label{fig:raising-prefixes}
  Joint graphical representation of the successive Łukasiewicz words~$L_i$ constructed by the raising algorithm
  in the run of Figure~\ref{fig:raising-exec}.
  Each walk~$L_i$ is above all~$L_j$ for~$j<i$ and the sequence ends with the Łukasiewicz walk~$w$
  of Figure~\ref{fig:bij-ex-p=5} (left).}
\end{figure}

\begin{example}
We continue Example~\ref{ex:bij-ex-p=5} by providing
in Figures \ref{fig:raising-exec} and~\ref{fig:raising-prefixes}
an example of execution of the raising algorithm
on the same 5-tandem walk~$\bar w$.
The forward process exemplified in Figure~\ref{fig:raising-exec},
with input~$\bar w$ (at the top) and output~$w$ (at the bottom),
should be compared to the RL transducer process
in Figure~\ref{fig:transduction-ex-p=5},
with same input (at the bottom) and output (at the top).
The Łukasiewicz words~$L_i$ constructed
are first shown in there precise form
with raisability and modality in Figure~\ref{fig:raising-exec},
and next as plain walks in Figure~\ref{fig:raising-prefixes}.
\end{example}

To make the connection between the raising algorithm and the transducer algorithm more precise in Theorem~\ref{thm:raising-RL} we need a few lemmas.
We gather simple properties in Figures \ref{fig:x-trichotomy} and~\ref{fig:y-trichotomy},
then give the more elaborate Lemmas \ref{lem upish} and~\ref{lem leftish}.
For the rest of the section, let $\bar{w}$ be a quarter-plane walk in $S_p^*$ with letters $\bar{a}_j$.

Immediate properties of the RL process and of the raising algorithm
can be derived from the two algorithm descriptions.
They are expressed in terms of the nature of steps in the input word~$\bar w$,
by trichotomies according to their $x$-coordinate and $y$-coordinate,
respectively.
These properties are gathered as tables
in Figures \ref{fig:x-trichotomy} and~\ref{fig:y-trichotomy}
(only their last rows are less immediate and will be proved in lemmas).
The trichotomy with respect to~$y$ matches exactly
the case distinction in the raising algorithm
(into \ref{step p}, \ref{step -1}, \ref{step m});
it also matches a partitioning of the transitions.
In contrast, the trichotomy with respect to~$x$ does not directly match
the computational case distinctions in our algorithms,
but further refinements of case~\ref{step m} and of \boxref{T3} and \boxref{T4}
provide a suitable partitioning.
(We could have gathered the two tables by a partitioning into four columns,
at the cost of some repetition.)

\begin{figure}
\centerline{%
\renewcommand{\arraystretch}{1.3}
\begin{tabular}{c|ccc}
\multirow{2}{*}{input step $\bar a_j$} & small step & west long step & other long step \\[-4pt]
 & $(1,-1)$ & $(-p,0)$ & with~$y>0$ \\
\hline
variation in~$v$ & +1 & 0 & $\leq0$, in $\{ -y, \dots, 0 \}$ \\
\multirow{2}{*}{transitions} & \multirow{2}{*}{\boxref{T5}, \boxref{T6}, \boxref{T7}} & \boxref{T3} or \boxref{T4}, & \boxref{T3} or \boxref{T4}, \\[-4pt]
 & & when $m = p-1$ & when $m < p-1$ \\
\multirow{2}{*}{case} & \multirow{2}{*}{\ref{step -1}} & \multirow{2}{*}{\ref{step m} with $m=p$} & \ref{step p} or \\[-4pt]
 & & & \ref{step m} with $m<p$ \\
\multirow{2}{*}{output modality} & \multirow{2}{*}{$-1$} & $>0$ & \multirow{2}{*}{$\geq0$} \\[-4pt]
 & & with raisability~0 & \\
\multirow{2}{*}{role in \ref{step -1}} & \multirow{2}{*}{cause one raising} & \multirow{2}{*}{none} & output may get raised \\[-4pt]
 & & & up to $y$~times
\end{tabular}
}
\caption{\label{fig:x-trichotomy}
  Impact of the $x$-coordinate of input steps~$\bar a_j$ in the flow of the RL transducer process and in the flow of the raising algorithm.
  Transitions have to be understood in reverse form,
  and variation in~$v$ is signed according to the RL flow.}
\end{figure}

\begin{figure}
\centerline{%
\renewcommand{\arraystretch}{1.3}
\begin{tabular}{c|ccc}
\multirow{2}{*}{input step $\bar a_j$} & small step & north long step & other long step \\[-4pt]
 & $(1,-1)$ & $(0,p)$ & with~$x<0$ \\
\hline
\multirow{2}{*}{effect on $H$} &$a_{\ell, m'} \mapsto a_{\ell,m'-1}$,  & \multirow{2}{*}{none} & \multirow{2}{*}{push $a_{\ell, |x|-1}$}\\[-4pt]
&pop $a_{\ell, 0}$, or do nothing & & \\
\multirow{2}{*}{transitions} & \boxref{T6}, \boxref{T5}, or \boxref{T7} & \multirow{2}{*}{\boxref{T1} or \boxref{T2}} & \multirow{2}{*}{\boxref{T3} or \boxref{T4}} \\[-4pt]
 & respectively & &\\
case & \ref{step -1} & \ref{step p} & \ref{step m} \\
output modality & $-1$ & $0$ & $>0$ \\
\multirow{2}{*}{role in \ref{step m}} & output gets frozen, raised & \multirow{2}{*}{none} & cause one raising and \\[-4pt]
  & or is never touched by \ref{step m} & & potentially freezings \\
\end{tabular}
}
\caption{\label{fig:y-trichotomy}
  Impact of the $y$-coordinate of input steps~$\bar a_j$ in the flow of the RL transducer process and in the flow of the raising algorithm.
  Transitions have to be understood in reverse form,
  and alteration of~$H$ is according to the RL flow.}
\end{figure}

\subsubsection{Augmented RL transducer process: a more elaborate stack~$V$}

For the purposes of the first lemma, we will reinterpret the parameter $v$ from the RL transducer algorithm as the length of a new stack~$V$ added to the algorithm.
The letters allowed on~$V$ will be integers representing positions in the input word~$\bar{w}$.
Thus it will make sense to speak of which letter or transition instance put a particular element onto~$V$ and which took it off, and the stack elements will keep track of these indices.
Specifically, for the rest of this section,
we augment the RL process with the stack~$V$
and we modify the reverses of the transitions \boxref{T1} to~\boxref{T7}.
In the augmented RL process,
the reverses of \boxref{T5}, \boxref{T6}, and~\boxref{T7}
push $i$ onto the stack~$V$
whenever those transitions are used at stage~$n+1-i$,
that is, when dealing with the input letter~$\bar{a}_i$.
Moreover,
the reverses of \boxref{T1}, \boxref{T2}, \boxref{T3}, and~\boxref{T4}
pop respectively $p$, $q$, $\ell$, and~$\ell$ elements from the stack~$V$.
(Those numbers are defined in the transition descriptions.)

\begin{example}
We consider again Example~\ref{ex:bij-ex-p=5}
and in particular the evolution of counters along LR and RL transductions
provided by Figure~\ref{fig:transduction-ex-p=5}.
As represented by the row for~$V$, the augmented RL transducer process
successively pushes 21, 20, and~19 to the stack~$V$ by three instances of \boxref{T7},
before an instance of \boxref{T3} pops all three integers, emptying the stack.
Four similar cycles of successive pushes before emptying the stack occur in the rest of the run.
Other examples involving \boxref{T2} would similarly empty the stack;
on the other hand, examples with longer runs of successive pushes
could lead to the use of \boxref{T4} or \boxref{T1},
causing pops of several integers without fully emptying the stack.
\end{example}

\subsubsection{Relation between steps with positive $y$-coordinates, the stack~$V$, and raisings}

\begin{lemma}\label{lem upish}
Consider an index~$j$ in~$\bar{w}$ for which $\bar{a}_j$~has positive $y$-coordinate.
When processing~$\bar{a}_j$, the RL transducer algorithm
either
\begin{itemize}
\item leaves the stack\/~$V$ unchanged, in which case the letter in position~$j$ is never raised in the raising algorithm, or
\item takes off at least one element from it,
in which case the steps (with indices $>j$)
that put on those elements taken off by~$\bar{a}_j$
are precisely the steps that raise the letter in position~$j$
in the raising algorithm.
\end{itemize}
\end{lemma}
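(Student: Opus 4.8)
The statement is about matching up, step by step, two processes that read the input quarter-plane walk~$\bar w$ in opposite directions: the raising algorithm reads $\bar w$ left to right, while the (augmented) RL transducer reads it right to left. The goal is to show that for an index~$j$ with $(\bar a_j)_2 > 0$, the set of later indices $i > j$ whose RL-steps deposit onto~$V$ exactly those stack elements that $\bar a_j$ later removes coincides with the set of raising steps acting on position~$j$ in the raising algorithm. The natural approach is induction on~$n - j$, i.e.\ processing the letters of~$\bar w$ from right to left in RL order (equivalently, thinking about which future letters have already been consumed by the transducer when it reaches~$\bar a_j$). The key structural fact I would establish first is an \emph{invariant} tying the stack~$V$ of the augmented RL process to the raisability data of the partial Łukasiewicz word~$L_i$ produced by the raising algorithm: at the moment the RL process has consumed $\bar a_{i+1}\dots\bar a_n$, the contents of~$V$ (read from bottom to top) record exactly the positions of those letters of~$L_i$ that currently have positive raisability and modality~$\geq 0$, in the same left-to-right order, and moreover $|V| = \RL v{\bar w}{i}$, which is already asserted in the caption of Figure~\ref{fig:transduction-ex-p=5} and follows by comparing the pushes/pops of the augmented RL transitions with the $\Delta_\tau v$ column of Figure~\ref{fig:variations-gen-p}.

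**Key steps in order.** First I would set up the augmented RL process precisely as in the paragraph preceding the lemma and verify, transition by transition (there are only the seven relevant ones \boxref{T1}--\boxref{T7}), that the number of pushes minus pops matches $\DRL{\bar w}{i-1}{i}v$ from the table; this gives $|V| = \RL v{\bar w}{i}$ and in particular that the pops requested by \boxref{T1}--\boxref{T4} never underflow. Second, I would prove the stack/raisability invariant by induction on the number of RL steps already performed, checking the three cases of the raising algorithm against the transitions they correspond to via Figures~\ref{fig:x-trichotomy} and~\ref{fig:y-trichotomy}: a $(0,p)$ step (case~\ref{step p}, transitions \boxref{T1}/\boxref{T2}) neither pushes nor pops and appends a letter of modality~$0$ but raisability that will be consulted later—here I must be careful about \emph{which} letters of~$L_i$ carry positive raisability; a $(1,-1)$ step (case~\ref{step -1}, transitions \boxref{T5}/\boxref{T6}/\boxref{T7}) pushes its own index and appends $\raisablelink{-1}{1}{-1}$ while decrementing one raisability, matching a single push; a long step with $x<0$ (case~\ref{step m}, transitions \boxref{T3}/\boxref{T4}) pops $\ell$ elements and these are precisely the $\ell=m$ most recent raisable-modality-$-1$ letters that get frozen/raised. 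Third, with the invariant in hand, the lemma is almost immediate: the letter in position~$j$ has positive $y$-coordinate, so when RL reaches~$\bar a_j$ it is processing one of the "north long step" or "other long step with $x<0$" columns of Figure~\ref{fig:y-trichotomy}; if it pops nothing from~$V$ then position~$j$ was never put on~$V$ by any later step in the sense that it never entered the "currently raisable" pool in a way that could be consumed, hence (by the invariant applied downward) its raisability in~$L_j$ and thereafter is never decreased, i.e.\ it is never raised; if it pops $r \geq 1$ elements, those elements were pushed by exactly the later letters that, in the raising algorithm, successively decremented the raisability of position~$j$, which is the definition of the steps that raise it.

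**Main obstacle.** The delicate point is the bookkeeping around the \emph{proxy} mechanism (modality $t > 0$) and the distinction, in case~\ref{step -1}, between a raising that stays at position~$\mu$ and one that is passed back to a proxy $\raisablelink{\mu'}{0}{-1}$. In the augmented RL process a $(1,-1)$ step always pushes \emph{its own} index onto~$V$, and in the raising algorithm it always "counts as a raising for~$\mu$, not for~$\mu'$" (as the text stresses just after Figure~\ref{fig:algo-gen-p-by-raisings}); so the correspondence between "the step that pushed a given stack element" and "the step that raised a given position" has to be set up so that it is the \emph{pushed index} that records the raiser and the \emph{popping step} that records the raised position, and one must check that the nominal-value versus raisability distinction does not corrupt the count. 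Concretely, the invariant should track positions by raisability and modality-$\geq 0$ status only, never by nominal value~$\mu$; once that is fixed, the proxy passing becomes invisible to the stack (the proxy letter has modality~$-1$, so it is not in the pool recorded by~$V$), and the argument goes through. I expect verifying this alignment across cases~\ref{step -1} ($t>0$) and~\ref{step m} to consume most of the real work, with everything else being routine table-checking.
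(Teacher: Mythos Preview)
Your proposed invariant is incorrect as stated, and the case analysis built on it contains errors that would prevent the argument from going through.

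First, the type mismatch in the invariant. You claim that after the RL process has consumed $\bar a_{i+1}\dots\bar a_n$, the stack~$V$ records ``exactly the positions of those letters of~$L_i$ that currently have positive raisability and modality~$\geq 0$''. But by the definition of the augmented RL process, the only indices ever pushed onto~$V$ are indices~$j$ with $\bar a_j=(1,-1)$ (transitions \boxref{T5}, \boxref{T6}, \boxref{T7}); those are indices $j>i$, and in the raising algorithm the output letter at each such position has modality~$-1$, not~$\geq 0$. Conversely the positions in~$L_i$ with modality~$\geq 0$ are positions $j'\leq i$ coming from input letters with nonnegative $y$-coordinate. The two sets are disjoint, so the invariant cannot hold.

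Second, the case analysis is off. You assert that a $(0,p)$ input step ``neither pushes nor pops'' on~$V$; but the reverses of \boxref{T1} and \boxref{T2} pop $p$ and~$q$ elements respectively, so a $(0,p)$ step is precisely one of the popping steps, and indeed is among the steps whose $\bar a_j$ the lemma is about. You also write that for a long step with $x<0$ the transitions \boxref{T3}/\boxref{T4} pop ``$\ell=m$'' elements corresponding to the modality-$-1$ letters frozen by case~\ref{step m}; but $\ell$ is the first index of the top stack letter of~$H$, unrelated in general to the~$m$ of the input step, and the freezing of modality-$-1$ letters is the content of Lemma~\ref{lem leftish} (about~$H$), not of the present lemma (about~$V$). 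You have conflated the two parenthesization problems.

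The paper's proof avoids trying to identify~$V$ with a set of raisable positions. Instead it reformulates both algorithms as solving the \emph{same} parenthesis-matching problem: each input step with positive $y$-coordinate contributes $y$ open brackets, each $(1,-1)$ step one close bracket; the RL process (reading right to left) and the raising process (reading left to right) both compute the standard noncrossing matching that leaves unmatched opens as leftward as possible. Once one checks that both computations realize this same matching, the correspondence between ``indices that pushed the elements popped by~$\bar a_j$'' and ``steps that raise position~$j$'' follows directly from the matching, with no need for a state invariant synchronizing the two processes at intermediate times.
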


\begin{proof}
Consider the trichotomy of input steps given by the $(1,-1)$ steps, the $(-p, 0)$ steps, and the steps with positive $y$-coordinate.
We will now to describe, in relation to this trichotomy,
which transitions in the RL transducer algorithm
and which cases in the raising algorithm deal with the steps.
We will also describe how the steps affect~$V$ in the RL process and are involved in raisings of steps of modality~$\geq 0$ in the raising algorithm.
The table in Figure~\ref{fig:x-trichotomy} captures facts,
all of which can be immediately derived from the two algorithm descriptions,
with the exception of the final row, which requires additional explanation.
This row is addressing how the input steps are involved in raisings of steps with modality $\geq 0$,
which can occur only in case~\ref{step -1} (as a raising requires raisability $>0$).
All $(1,-1)$ steps are dealt with by case~\ref{step -1} and vice versa; each time exactly one raising of a step of modality $\geq 0$ is caused (this raising may or may not involve a proxy, but that is of no concern for this lemma).  The step~$(-p, 0)$ has no involvement with raisings of steps of modality $\geq 0$ as the raisings it causes are on steps of modality $-1$ and its output step is unraisable from its creation.  The steps with positive $y$-coordinate have output steps of modality $\geq 0$ with nonzero raisability: each such output step starts with raisability equal to the $y$ value of the input step and may get raised any number of times between $0$ and this value.  Steps with positive~$y$ do cause raisings but only on steps of modality $-1$ which are not of present concern.

Therefore the input steps that raise steps of nonnegative modality
(that is, those dealt with by case~\ref{step -1})
and the steps that put an element on~$V$
(that is, those that increase~$v$)
are the same, namely the $(1,-1)$ steps.
Furthermore the steps with positive $y$-coordinate are the only ones which may take elements off~$V$ and are also the only ones whose corresponding outputs may get raised by step~\ref{step -1}.

It remains to show that the these classes of steps are associated to each other as described in the statement of the lemma.
This is a parenthesis matching problem.
Let each step with positive $y$-coordinate correspond to as many open parentheses as its $y$-coordinate and each $(1,-1)$ correspond to a close parenthesis.
In this way the input word is associated to a string of parentheses.
Note that there may be more open parentheses in this string than close parentheses.
The final thing we need to prove is that the RL process and the raising process
both induce a natural matching of these parentheses,
both being in fact the same matching.
By remembering what input letter has introduced each parenthesis,
this matching induces an association between input letters,
each step with positive $y$-coordinate
being potentially linked to several $(1,-1)$ more rightwards.
Specifically, this association will be
the one described in the statement of the lemma.

Consider first the RL transducer process.
Reading the string of parentheses from right to left in parallel to the RL process,
a close parenthesis corresponds to adding an element to~$V$, open parentheses to removing elements of~$V$ provided $V$~is not empty yet.
By inspecting the transition rules, we see that in the RL process,
when $v$ decreases it always decreases by as much as it can
(namely by the minimum of its current value and the $y$-coordinate of the input step).
Consequently the parentheses are matched naturally
from right to left in a last in first out manner,
with the additional rule that unmatchable opens are skipped when they are met.
The result of this is the parenthesis matching obtained
by forcing all extra opens to be as leftwards as possible
while retaining the possibility to match all closes.
Furthermore,
the links between input letters naturally induced from the parenthesis matching
are such that any step $\bar{a}_j$ with positive $y$-coordinate is linked to
exactly those input steps in the RL process
that added elements of~$V$ that $\bar{a}_j$~later removes.  The stack $V$ remains unchanged precisely when the linking induced from the parenthesis matching does not link $\bar{a}_j$ to any other steps.

Now consider the raising process.
The reader can check that the raisability of the output letter at position~$j$ after dealing with the input letter~$\bar{a}_i$ is equal to the number of unmatched open parentheses in the group originating from~$\bar{a}_j$ when we consider the left-to-right last in first out parentheses-matching algorithm after all groups originating from $\bar{a}_1$, \dots, $\bar{a}_i$ have been dealt with.  This can be verified by a step-by-step comparison or a straightforward induction.
This gives the same parenthesis matching as the RL transducer process.
Furthermore,
the links between input letters naturally induced from the parenthesis matching
are such that any step $\bar{a}_j$ with positive $y$-coordinate is linked to
exactly those input steps which raised the step at position~$j$.  The step at position~$j$ is never raised precisely when the linking induced from the parenthesis matching does not link $\bar{a}_j$ to any other steps.

The concluding sentences of the previous two paragraphs together prove the lemma.
\end{proof}

\subsubsection{Relation between steps with negative $x$-coordinates, the stack~$H$, and freezings of raisings}

Letters with negative $x$-coordinates cause a push onto $H$ in the RL transducer algorithm and no other letters cause a push onto $H$.  Let $a_{\ell, m-1}$ be a letter that is pushed onto $H$.  Observe that the only way for $a_{\ell, m-1}$ to be taken off the stack is by $m-1$ \boxref{T6} transition instances and then a \boxref{T5} transition instance (not necessarily consecutively).  Note also that in the raising algorithm the case that deals with letters with negative $x$-coordinate is case~\ref{step m}.
These simple observations are refined in the next lemma.

\begin{lemma}\label{lem leftish}
Let $j$ be an index in~$\bar{w}$
for which $\bar{a}_j$~has negative $x$-coordinate,
so that some letter $a_{\ell, m-1}$ is pushed on~$H$
at step~$j$ in the RL process.
Then,
the indices where the $m-1$ \boxref{T6} transition instances occur
are precisely the indices of the letters
that are permanently frozen by case~\ref{step m} for $\bar{a}_j$,
and the index where the \boxref{T5} instance occurs is
the index of the letter raised by~$\bar{a}_j$.

Additionally, indices where \boxref{T7} transition instances occur correspond to indices where the output step in the raising algorithm has modality $-1$ but is never raised or frozen.
\end{lemma}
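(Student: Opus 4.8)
The plan is to reduce both the RL transducer process and the raising algorithm to one and the same parenthesis matching on~$\bar{w}$, in the spirit of the proof of Lemma~\ref{lem upish} but with the roles of pushes and pops interchanged between the two processes. First I would read $\bar{w}$ from left to right and form a word~$B$ over the two-letter alphabet~$\{X,Y\}$ by replacing each step $(-m,p-m)$ (for $1\le m\le p$) by a block of~$m$ consecutive letters~$X$, each step $(1,-1)$ by a single letter~$Y$, and each step $(0,p)$ by nothing. On the RL side, the negative-$x$ step at index~$j$ pushes the single letter~$a_{\ell,m-1}$ onto~$H$, with second component $m-1=|x|-1$; as already noted before the lemma, this letter is later removed by exactly $m-1$ instances of~\boxref{T6}, each lowering its second component, followed by one instance of~\boxref{T5}, and the LIFO discipline of~$H$ forces every letter later pushed on top of it to be entirely consumed before it is exposed again. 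Identifying this stack letter with the block of~$m$ letters~$X$ (the $i$th unit popped corresponding to the $i$th of these $m$~transition instances), the RL process realizes the matching $M_{\mathrm{RL}}$: scan~$B$ from the right, push on each~$X$, and on each~$Y$ pop the topmost~$X$ and record the pair if the stack is nonempty, or else leave this~$Y$ unmatched. The unmatched~$Y$'s are precisely the steps handled by~\boxref{T7}, and, $\bar{w}$ being a quarter-plane walk, the RL process ends with $H=\epsilon$ (by the argument in the proof of Theorem~\ref{thm:bij-Phi-p-Psi-p}), so all~$X$'s are matched. On the raising side, each $(1,-1)$ step creates through case~\ref{step -1} a modality-$(-1)$ letter of raisability~$1$, which remains raisable until some later instance of case~\ref{step m} raises it or freezes it, in both cases setting its raisability to~$0$; moreover every modality-$(-1)$ letter arises this way and is touched by case~\ref{step m} at most once. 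Thus the raising algorithm realizes the matching $M_{\mathrm{ra}}$: scan~$B$ from the left, push on each~$Y$, and on each~$X$ pop the topmost~$Y$ and record the pair; here a~$Y$ is unmatched exactly when the corresponding modality-$(-1)$ letter is never raised or frozen, and one notes additionally that such a letter is then never used as a proxy either, a proxy step always being one that had been raised by case~\ref{step m}.

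The crux, and the step I expect to be the main obstacle, is the equality $M_{\mathrm{RL}}=M_{\mathrm{ra}}$. Both are non-crossing matchings of~$B$ in which every matched pair consists of a~$Y$ together with a strictly later~$X$, and I would identify each of them with the matching produced by the confluent rewriting on~$\{X,Y\}^{\ast}$ that repeatedly deletes an adjacent factor~$YX$: each deletion removes an innermost pair; a reduction sequence records exactly the pairs of $M_{\mathrm{RL}}$ and also exactly those of $M_{\mathrm{ra}}$ (in each of the two scanning directions the pair popped by a stack step is innermost among what has not yet been matched); and the terminal word has the shape $X^{a}Y^{b}$, which carries no admissible pair. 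This is precisely the parenthesis-matching coincidence invoked in the proof of Lemma~\ref{lem upish}, transposed. From $M_{\mathrm{RL}}=M_{\mathrm{ra}}$ it follows that the common matching assigns to the negative-$x$ step $\bar{a}_j=(-m,p-m)$ a set of exactly~$m$ steps~$(1,-1)$, say at indices $i_1<i_2<\dots<i_m<j$, and — tracing the single stack entry $a_{\ell,m-1}$ — that the RL process processes these $m$~steps in the order $i_m,i_{m-1},\dots,i_1$, the first $m-1$ of them (at $i_m,\dots,i_2$) using transition~\boxref{T6} and the last one (at~$i_1$) using transition~\boxref{T5}.

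It then remains to read off case~\ref{step m}. When it processes~$\bar{a}_j=(-m,p-m)$, the raising algorithm writes $L_{j-1}=w_0\,\raisablelink{\mu_1}{j_1}{-1}\,w_1\cdots\raisablelink{\mu_m}{j_m}{-1}\,w_m$, where the $m$~displayed letters are the $m$ most recent raisable modality-$(-1)$ letters and $\raisablelink{\mu_\ell}{j_\ell}{-1}$ is the one at index~$i_\ell$; it raises $\raisablelink{\mu_1}{j_1}{-1}$, at the smallest index~$i_1$, and permanently freezes the letters at indices $i_2,\dots,i_m$. Comparing with the previous paragraph, the index of the~\boxref{T5} instance associated with~$\bar{a}_j$ equals the index of the letter that~$\bar{a}_j$ raises, and the indices of the $m-1$ instances of~\boxref{T6} associated with~$\bar{a}_j$ equal the indices of the letters that~$\bar{a}_j$ permanently freezes; this is the first assertion. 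For the second assertion, the indices at which~\boxref{T7} is used are exactly the indices of the unmatched~$Y$'s of $M_{\mathrm{RL}}=M_{\mathrm{ra}}$, hence exactly the indices at which the raising algorithm outputs a modality-$(-1)$ letter that is never raised or frozen. This also establishes the last row of Figure~\ref{fig:y-trichotomy}; once the matching coincidence $M_{\mathrm{RL}}=M_{\mathrm{ra}}$ is in hand, everything else is bookkeeping on the stack behaviour already spelled out in the two algorithm descriptions.
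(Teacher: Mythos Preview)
Your proposal is correct and follows essentially the same strategy as the paper's proof: both reduce the claim to a parenthesis-matching problem (your $X/Y$ word is exactly the paper's close/open parenthesis string), argue that the RL transducer and the raising algorithm each realize the unique noncrossing matching in which every matched pair has the $(1,-1)$-letter to the left, and read off the conclusion. Your presentation is slightly more explicit in two places---the confluent $YX\to\epsilon$ rewriting to justify $M_{\mathrm{RL}}=M_{\mathrm{ra}}$, and the identification of the \boxref{T5} index with the smallest matched index~$i_1$ (which the paper leaves implicit)---but the underlying argument is the same.
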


\begin{proof}
The relevant trichotomy of input letters for this lemma
is based on $x$-coordinate rather than $y$-coordinate.
A table again helps to collect the key information
that can be read off the definitions of the two algorithms,
and is given in Figure~\ref{fig:y-trichotomy}.
The final row addresses how the input steps and their outputs are involved in raisings and freezings of steps of modality $-1$.  These raisings and freezings occur only in case~\ref{step m}.  As we will prove in what follows, the three possibilities in the bottom left entry of the table correspond respectively with the three possibilities in each of the first and second left entries of the table.

  Again we have a parenthesization problem.  Associate every $(1,-1)$ step in the input to an open parenthesis.  Associate every input step with $x$-coordinate ${x<0}$ with~$|x|$ close parentheses.   The input word is again associated to a string of parentheses and again there may be more open parentheses in this string than close parentheses.
The RL process and the raising process give natural matchings of these parentheses,
and the final claim is that they both give the same matching,
and that specifically, the matching is
as described in the statement of the lemma.

  Consider first the RL transducer process.  Reading the string of parentheses from right to left in parallel to the RL process, every time an element $a_{\ell, |x|-1}$ is added to the stack $H$, $|x|$ close parentheses appear in the parenthesis string, and close parentheses can appear in no other way.
Every time the second coordinate of the top of~$H$ is
decreased by~$1$
as well as every time the top of~$H$ is popped,
there is an open parenthesis that matches
the most recent unmatched close parenthesis;
additionally, each instance of transition~\boxref{T7},
which can only occur when $H$~is empty,
introduces one open parenthesis,
which remains unmatched.

We proceed to justify that such a matching is always possible.
Recall that the $m$-weight of~$a_{\ell, m'-1}$ is~$m'$ (see Section~\ref{sec:invariants}).
Thus,
because the RL transducer process makes no other kind of alteration to~$H$,
at any given point in the RL process,
$\wt_m(H)$ is
the number of unmatched close parentheses at the corresponding point
in the parenthesis string.
In particular the matching is always possible.  Also, the open parentheses from \boxref{T7} steps only occur when $H$ is empty, that is, when there are no unmatched close parentheses.

Therefore, the RL transducer algorithm matches the parentheses from right to left in a last in first out manner with the additional rule that unmatchable opens are skipped when they are met.  Furthermore,
the links between input letters naturally induced from the parenthesis matching
are such that any step $\bar{a}_j$ with negative $x$-coordinate is linked to exactly those steps that modify (via \boxref{T6}) or pop (via \boxref{T5}) the element of $H$ that $\bar{a}_j$ pushed.

Now consider the raising process.
The reader can verify by a straightforward induction that for a $(1,-1)$ input step at some position $j$,
the output letter from this step,
after dealing with the input letter~$\bar{a}_i$ for $i\geq j$,
is raisable if and only if the open parenthesis from the $(1,-1)$ step is unmatched when we consider the left-to-right last in first out parentheses-matching algorithm after all parentheses originating from $\bar{a}_1$, \dots, $\bar{a}_i$ have been dealt with.
Thus the raising algorithm matches the parentheses in a left to right, last in first out manner.  This gives the same parenthesis matching as the RL transducer process.  Observe that the unmatched open parentheses correspond to output letters from $(1,-1)$ input steps which are still raisable at the end of the algorithm.  Since such output steps are raisable at most once and freezing also sets the raisability to $0$, these are steps which were never frozen or raised.
Furthermore,
the links between input letters naturally induced from the parenthesis matching
are such that any step $\bar{a}_j$ with negative $x$-coordinate is linked to exactly those steps that were either frozen or raised by~$\bar{a}_j$ by step~\ref{step m}.

\subsubsection{Equivalence between the raising and transducer processes}

The concluding sentences of the previous two paragraphs prove the first statement of the lemma.  The characterization of the unmatched open parentheses in the two algorithms implies the second statement of the lemma.
\end{proof}

\begin{theorem}\label{thm:raising-RL}
  The raising algorithm in Figure~\ref{fig:algo-gen-p-by-raisings}
  and the RL transducer algorithm produce the same output.
\end{theorem}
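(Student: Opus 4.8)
The plan is to prove the two outputs agree letter by letter: for every index~$i$ I will show that the letter at position~$i$ in the word returned by the raising algorithm of Figure~\ref{fig:algo-gen-p-by-raisings}, once its raisability and modality superscripts have been forgotten, equals the letter~$w_i$ emitted by the RL transducer at its $(n+1-i)$th step. The bulk of the work is already carried by Lemmas~\ref{lem upish} and~\ref{lem leftish}, together with the trichotomy tables of Figures~\ref{fig:x-trichotomy} and~\ref{fig:y-trichotomy}: these say that both algorithms realize the \emph{same} right-to-left, last-in-first-out parenthesis matchings --- the one read off the stack~$V$ governing which steps cause raisings (and passings to proxies) of letters of modality~$\ge0$, and the one read off the stack~$H$ governing which letters are raised or frozen by case~\ref{step m}. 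Given those matchings, what remains is a bookkeeping case analysis on the input letter~$\bar a_i$, in each case tracking (a)~how the letter born at position~$i$ by the raising algorithm evolves and its final nominal value, and (b)~which of \boxref{T1}--\boxref{T7} the RL process uses at~$i$ and the value~$w_i$ it outputs. I will also note at the outset that both algorithms run to completion on the quarter-plane walk~$\bar w$ --- the RL transducer because \boxref{T8} has no reverse form, and the raising algorithm because the factorizations it demands are exactly the ones whose existence the parenthesis matchings of the two lemmas certify.

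The long-step cases go quickly. If $\bar a_i=(0,p)$, the letter at position~$i$ is born as $\raisablelink{0}{p}{0}$; having modality~$0$ it can only be raised directly by later $(1,-1)$ steps, each raising adding~$1$ to its value, so its final value is the number of times it is raised --- which by Lemma~\ref{lem upish} is the number of elements~$\bar a_i$ pops off~$V$, and by inspection of Figure~\ref{fig:algo-RL-elementary-step} this is $\min(\sRL{v}_i,p)$, namely the value $w_i$ output by \boxref{T2} or \boxref{T1}. If $\bar a_i=(-s,p-s)$ with $1\le s\le p$, the letter at position~$i$ is born in case~\ref{step m} with modality $d\ge1$ (and raisability $p-s$); every value-increasing operation --- a direct raising, the raise of case~\ref{step m}, or a proxy increment --- acts only on a letter of modality~$0$ or~$-1$, so a letter of positive modality keeps the value~$-1$ forever, matching $w_i=-1$, the common output of \boxref{T3} and \boxref{T4}.

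The delicate case, and the one I expect to be the main obstacle, is $\bar a_i=(1,-1)$, because it is where the two stacks and the proxy mechanism interact. The letter at~$i$ is born as $\raisablelink{-1}{1}{-1}$, so case~\ref{step m} can touch it at most once, either raising it or freezing it, while the RL process uses one of \boxref{T5}, \boxref{T6}, \boxref{T7} at~$i$; Lemma~\ref{lem leftish} matches these three possibilities respectively with ``raised by some west-ish $\bar a_{j'}$'', ``frozen by some west-ish $\bar a_{j'}$'', and ``never touched by case~\ref{step m}'', and in the last two the value stays~$-1=w_i$. In the first, the letter becomes $\raisablelink{0}{0}{-1}$ at step~$j'$, after which it can only gain value by serving as the proxy of the west-ish-born letter at~$j'$; here I must check (i)~that it is the proxy of exactly one letter --- true because a modality-$(-1)$ letter can be raised, hence become a proxy, at most once, raising having zeroed its raisability --- and (ii)~that this west-ish-born letter passes its raising to~$i$ exactly $\ell$ times, where $a_{\ell,s-1}$ is the $A_p$-letter that $\bar a_{j'}$ pushes onto~$H$ in the RL process. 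Point (ii) comes from combining Lemma~\ref{lem upish} (the number of passings equals the number of $V$-pops of~$\bar a_{j'}$) with the arithmetic observation that this number is precisely the first index~$\ell$ read off \boxref{T3}/\boxref{T4}. Together these give that the value at~$i$ becomes $0+\ell=\ell$, exactly the output of \boxref{T5}. Threading this chain --- $V$-stack $\leftrightarrow$ number of proxy-passings $\leftrightarrow$ first index of the popped $H$-letter $\leftrightarrow$ $w_i$ --- and keeping straight which letter is whose proxy, is the real content of the proof; once it is done, every letter of~$S_p$ has been treated and the theorem follows.
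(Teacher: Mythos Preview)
Your proposal is correct and follows essentially the same approach as the paper's own proof: a position-by-position case analysis on the input letter~$\bar a_i$, using Lemma~\ref{lem upish} to count raisings of modality-$\ge0$ letters via the stack~$V$ and Lemma~\ref{lem leftish} to align the \boxref{T5}/\boxref{T6}/\boxref{T7} trichotomy at a $(1,-1)$ input with the raised/frozen/untouched trichotomy in the raising algorithm, then threading the proxy mechanism through the first index~$\ell$ of the $H$-letter to recover the output of \boxref{T5}. Your write-up is in fact slightly more explicit than the paper's in two places --- the $-1\to0\to\ell$ value chain in the \boxref{T5} case, and the uniqueness of the letter for which position~$i$ serves as proxy --- but the argument is the same.
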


\begin{proof}
We prove the result for the augmented RL transducer algorithm,
which has the same output as the unmodified RL transducer algorithm.
The proof considers each position~$j$ independently
and goes by a case analysis on the input letter~$\bar{a}_j$:

\begin{itemize}

\item
  Suppose $\bar{a}_j$~is~$(0,p)$.
Case~\ref{step p} stipulates that the first letter output by the raising algorithm at posititon $j$ is $0$ and the second point of case~\ref{step -1} stipulates that when this step is raised its output letter increases by 1 for each raising.
Thus the final output by the raising algorithm at position~$j$ is the number of times the step in position $j$ is raised.
On the other hand, in the RL transducer algorithm, transitions \boxref{T1} and \boxref{T2} together make the output at~$j$ be the number of elements of~$V$ that are removed at step~$j$.
Therefore Lemma~\ref{lem upish} implies that the final output of both algorithms at position~$j$ is the same.

\item
  Suppose $\bar{a}_j$~is~$(1,-1)$.
  We refine the case analysis according the the transition
  used at that position~$j$ by the RL process:

  \begin{itemize}
  \item
If $\bar{a}_j$ triggers a \boxref{T6} instance, then the output letter in this position by the RL process is $-1$ , and by Lemma~\ref{lem leftish} the output letter in this position in the raising process was frozen and so is also $-1$ (by steps~\ref{step -1} and step~\ref{step m}, along with the fact that Lemma~\ref{lem leftish} accounts for all transitions that affect $H$).

  \item
  If $\bar{a}_j$ triggers a \boxref{T5}
then Lemma~\ref{lem leftish} also implies
that the output by the raising algorithm at that position~$j$
is turned into a proxy step after dealing with some position~$i \geq j$.
The step at position~$i$ is the step that originally put on
the stack letter that the \boxref{T5} instance removed.
The output of this \boxref{T5} instance is the value of the first index of this stack letter, but this is the number of elements taken off of~$V$ by~$\bar{a}_i$.
Lemma~\ref{lem upish} implies that this number is the same as the number of steps that raised the output letter at position~$i$.
Hence by the nature of proxy raising,
this number is the final output value at position~$j$.
Therefore both algorithms also agree in this case.

  \item If $\bar{a}_j$ triggers a~\boxref{T7} then the RL transducer algorithm outputs a $-1$; in the raising algorithm, the output step at position $j$ first appears as a $-1$ by case~\ref{step -1}, while by Lemma~\ref{lem leftish} it is never raised, and so it is $-1$ in the final output.
  \end{itemize}

\item
Suppose $\bar{a}_j$~has a negative $x$-coordinate.
All these cases yield~$-1$ as output steps in both algorithms.

\end{itemize}

Therefore, the description of~$\Psi_p$ by raisings agrees
with the transducer description.
\end{proof}

Overall, the raising description is more complicated than the transition-based description of $\Psi_p$.

This is a good moment to return to Proposition~\ref{prop:observations}.
The (T2) and~(T3) steps with $p-m-1>\ell$ described in the first point of the proposition are the steps that could be raised but do not get raised by the end of the raising algorithm.  This gives an intuitive explanation for why they correspond to an increased bound on the abscissa for the quarter-plane walk: if the quarter-plane walk were extended to return to lower $x$-values, then sufficiently many steps must be present in the $p$-Łukasiewicz walk that can still be raised to correspond to these extension steps.  The extra capacity for raising corresponds to the bound on the abscissa, in the precise manner given by Proposition~\ref{prop:observations}.

Additionally, the converse of the third item of Proposition~\ref{prop:observations} is also true.  To see this, suppose the quarter-plane walk~$\bar{w}$ is a concatenation of $\bar{w}_1$ and~$\bar{w}_2$.  By the raising description of~$\Psi_p$, once all the steps of~$\bar{w}_1$ have been processed we have $\Psi_p(\bar{w}_1)$.  Since $\bar{w}_2$~is also a quarter-plane walk, $\Psi_p$~applied to~$\bar{w}_2$ is well-defined.  This implies that whenever raising occurs while working through~$\bar{w}_2$, the steps to be raised must remain within~$\bar{w}_2$.  Therefore the same holds with $\Psi_p$ applied to~$\bar{w}_2$ as it occurs within~$\bar{w}$.  Therefore $\Psi_p(\bar{w})$~is the concatenation of $\Psi_p(\bar{w}_1)$ and~$\Psi_p(\bar{w}_2)$, each of which are $p$-Łukasiewicz walks.  Hence there is a return to the $x$-axis at the concatenation point, giving the converse to the second point of Proposition~\ref{prop:observations}.

\section{Discussion}

\subsection{Comparison to the approach by oriented planar maps}
\label{sec:conclusions}

The work by Bousquet-Mélou, Fusy, and Raschel%
~\cite{BousquetMelouFusyRaschel-2017-OBC}
studies a larger class of generalized tandem walks,
with (infinite) step set
\[ \{\, \ldir{3bb} \,\} \cup \bigcup_{p\geq1} S_p , \]
and it provides a length-preserving involution~$\sigma$ of this class
that also preserves the number of \ldir{3bb} steps
and, for each~$p$, the total number of steps in~$S_p$.
More importantly, this involution exchanges the differences
$a = x_{\text{start}} - x_{\text{min}}$
and $d = y_{\text{end}} - y_{\text{min}}$
and preserves the differences
$b = y_{\text{start}} - y_{\text{min}}$
and $c = x_{\text{end}} - x_{\text{min}}$
(with obvious notation).
By a suitable restriction, namely,
forcing the number of steps from the~$S_p$ to be~0 except for a single~$p$
and considering
walks with $a = d = 0$ on the one hand (half plane),
and walks with $a = b = 0$ on the other hand (quarter plane),
this induces a bijection between $\mathcal L_p$ and~$\mathcal T_p$.
In particular, the involution~$\sigma$ maps
each prefix of a given~$\bar w \in \mathcal T_p$
to a word of~$\mathcal L_p$.
This makes it impossible for the sequence of images in~$\mathcal L_p$
to be the prefixes of the image of the full word~$\bar w$.
Thus, $\sigma$~breaks the principal requirement of our construction,
and our bijections $\Phi_p$ and~$\Psi_p$
are no restrictions of the involution~$\sigma$.
Further, our bijection preserves $y_{\text{end}} - x_{\text{end}}$,
which has no clear behaviour under~$\sigma$.

It remains of interest to determine
if $\sigma$~could be implemented
by an automaton similar to those we introduced.
In this regard,
it is interesting to note that
the quantities exchanged by the involution~$\sigma$
involve the records $x_{\text{min}}$ and~$y_{\text{min}}$,
which cannot be modeled without extending the notion of parameters
that we introduced in Section~\ref{sec:parameters}:
one needs to augment the “arithmetic” of parameters to allow comparisons.

\subsection{Symmetrization beyond $p=1$}

The generalization from the $p=1$ case to the symmetrized $p=1$ case is surprisingly nice; we keep track of the very same counters and simply slot in the extra transitions.  One might have expected a need for four counters, doubling counters as well as steps, but two suffices.  Having made this generalization the obvious next step would be to generalize $p>1$ to some sort of symmetrized situation.  Unfortunately, we do not know how to do this.

Take $S_{p, \text{sym}}$ to be the union of $S_p$ and the reflection of $S_p$ in the line $y=x$.  The first question is what sets of walks ought be in bijection.  We would expect the generalized $\Psi_{p, \text{sym}}$ to have quarter-plane walks in $S_{p, \text{sym}}^\ast$ as its domain.  The most obvious answers for the image of $\Psi_{p, \text{sym}}$ would be either bicoloured Łukasiewicz walks or half-plane walks ending on the $x$-axis in $S_{p, \text{sym}}^\ast$.  The latter suggestion is in view of the bijection between Łukasiewicz walks and half-plane walks ending on the $x$-axis in $S_p^\ast$.  However, direct counting of small walks in these sets show that neither of these are equinumerous with quarter-plane walks in $S_{p, \text{sym}}^\ast$.

Leaving this problem aside for the moment, we could try to generalize the transitions in the hopes that the image itself will become clear in due time.  Analogously to the relationship between the $p=1$ transitions and the symmetrized $p=1$ transitions, we would expect that restricting to only the flipped steps as input should give us something equivalent to the original map but with differently coloured output steps and with the parameters swapped.  Thus we expect that transitions for a $\Psi_{p,\text{sym}}$ map would have two stacks as parameters and would specialize to $\Psi_p$ in two ways.  Unfortunately, neither considering the symmetries of the transitions nor working through many small examples have shown how this could be done.

Consequently, we are left without a clear idea of how to generalize our maps to a symmetrized $p>1$ situation.

\subsection{Standard Young tableaux of bounded height}

In their work~\cite{EuFuHouHsu-2013-SYT}, Eu, Fu, Hou, and Hsu gave
an algorithm for the bijection when~$p=1$ that is similar in spirit to the algorithm by raising.
See our comments in Section~\ref{sec:EuFuHouHsu}.
This bijection,
between (usual) Motzkin walks and Yamanouchi words as we described it in Section~\ref{sec:3-step-model},
really is a bijection between Motzkin walks and standard Young tableaux of height at most~$3$.
But the true contribution of~\cite{EuFuHouHsu-2013-SYT} was to generalize it to a bijection
between standard Young tableaux of height at most~$d$
and a class of coloured Motzkin walks that they introduced.
Their “$d$-Motzkin walks” are Motzkin walks whose up and down steps
may appear in $d$-colours,
with an additional constraint between the occurrences of the colours
that they describe by inequalities.
The bijection works by treating and removing the colours one after the other.
This makes it very tempting to try and apply the transducer approach to $d$-Motzkin walks.
Our attempts have failed so far.
If one conceives the transducer approach applied to (usual) Motzkin walks
as a means to reschedule the raising transformations
so that they can be applied as soon as a raised step is considered,
instead of postponing to when the process needs it to restore a Motzkin property,
then a transducer algorithm for bounded height
would develop such a rescheduling for each colour,
and the $d$~transducers thus obtained would then have to be merged into a single one,
leading to further rescheduling.
It would be great to see such an algorithm.

\printbibliography


\end{document}